\newcommand{\stepa}[1]{\overset{\rm (a)}{#1}}
\newcommand{\stepb}[1]{\overset{\rm (b)}{#1}}
\newcommand{\stepc}[1]{\overset{\rm (c)}{#1}}
\newcommand{\stepd}[1]{\overset{\rm (d)}{#1}}
\title{A Decomposition Augmented Lagrangian Method for Low-rank Semidefinite Programming\thanks{Submitted to the editors DATE.
\funding{Z. Wen was supported in part by the NSFC grant 11831002.}}}
\author{Yifei Wang\thanks{Department of Electrical Engineering, Stanford University, United States
  ({wangyf18@stanford.edu}).}
  \and  Kangkang Deng$^{\ddagger}$ \and Haoyang Liu$^{\ddagger}$  \and
Zaiwen Wen\thanks{Beijing International Center for Mathematical Research, Peking
University, China  ({\{dengkangkang,liuhaoyang,wenzw\}@pku.edu.cn}).}
}
\newcommand{\revise}[1]{{#1}}
\newcommand{\blue}[1]{{#1}}
\begin{document}

\maketitle

\begin{abstract}
We develop a decomposition method based on the augmented Lagrangian framework to solve a broad family of semidefinite programming problems\revise{,} possibly with nonlinear objective functions, nonsmooth regularization, and general linear equality/inequality constraints.  In particular, the positive semidefinite variable along with a group of linear constraints can be transformed into a variable on a smooth manifold \revise{via matrix factorization}. The nonsmooth regularization and other general linear constraints are handled by the augmented Lagrangian method. Therefore, each subproblem can be solved by a semismooth Newton method on a manifold. Theoretically, we show that the first and second-order necessary optimality conditions for the factorized subproblem are also sufficient for the original subproblem under certain conditions. Convergence analysis is established for the Riemannian subproblem and the augmented Lagrangian method. Extensive numerical experiments on large-scale semidefinite programming \revise{problems} such as max-cut, nearest correlation estimation, clustering, and sparse principal component analysis demonstrate the strength of our proposed method compared to other state-of-the-art methods. 
\end{abstract}

\begin{keywords}
Semidefinite Programming, Augmented Lagrangian method, Semismooth Newton method,  Riemannian manifold
\end{keywords}

\begin{AMS}
  90C06, 90C22, 90C26, 90C56
\end{AMS}

\section{Introduction}

Let $\mbS^n$ denote the linear space of symmetric matrices with size $n\times n$.
\revise{Let $A_1,\cdots,A_m,B_1,\cdots,B_{m_0}\in\mbS^n$ be a given set of matrices.}    This paper aims to solve a general composite semidefinite programming problem (SDP):
\begin{equation}\label{sdp:general}
\begin{aligned}
\min_{X\in \mcD}& \quad  f(X)+h(X),\quad \text{s.t.}  \quad\mcA(X)=b,
\end{aligned}
\end{equation}
where 
$ \mcA(X)=[\tr( A_1 X),\dots, \tr( A_{ m} X)]^T$.
The function $f(X)$ is smooth while $h(X)$ is possibly nonsmooth (see the details
in Assumption \ref{assumption1}). The domain $\mcD=\{X\in \mbS^n|X\succeq 0, \mcB(X)=b_0\}$ with $ \mcB(X)=[\tr( B_1 X),\dots, \tr( B_{ m_0} X)]^T$ defines \revise{a} certain Riemannian structure. 
We can recover the linear SDP by taking $f(X)=\tr(CX)$, $h(X)=0$ and $\mcD=\{X\succeq 0\}$ where $C\in \mbS^n$. 
The linear SDP is of central importance in convex optimization. 
It serves as a tractable \revise{convex relaxation \cite{boyd2004convex, wolkowicz2012handbook, anjos2011handbook}} of many (possibly NP-hard) difficult problems from combinatorial optimization, constraint \revise{satisfaction}, computer vision, machine learning, etc.


\revise{Our} reason for considering SDP in the general form of \eqref{sdp:general} is that it covers a broader range of optimization problems with semidefinite variables, especially for the problems with \revise{a} nonlinear function $f(X)$ and/or \revise{a} nonsmooth regularization $h(X)$. For example, \revise{the} nonlinear $f(X)$ appears in \revise{the} nearest correlation problem 
\cite{qsdpnal} and \revise{a} convex formulation of \revise{the} neural network training problem
\cite{bartan2021neural}. The nonsmooth regularization term $h(X)$ usually
improves the \revise{ solution quality } for the original problem. In sparse PCA, $h(X)$ is the $\ell_1$ norm to improve \revise{sparsity}. In the
Lov{\'a}sz theta problem and clustering problems, $h(X)$ can be the indicator function of the set of
non-negative matrices $\mcX = \{X\in \mbS^n|X\geq 0\}$. The non-negative
constraints on the semidefinite variable can be further \revise{strengthened to} the
non-negative constraints on the factorized variable. \revise{Also}, $h(X)$ can be the entropic penalty function
\cite{ijcai2019-157} 
to find a low-rank solution. 

Our approach developed for \eqref{sdp:general} can also deal with the following two alternative formulations:
\begin{align}
\min_{X\in \mcD}& \quad  f(X)+h(X),\quad \text{s.t.}  \quad\mcA(X)=b,\; \mathrm{rank}(X) \leq r, \label{prob:rank-constr}\\
\min_{X\in \mcD}& \quad f(X) + h(R),\quad \text{s.t.}\quad \mcA(X) = b, \; X = R^TR. \label{prob:hR-fact}
\end{align}
\revise{These problems} play an important role in low-rank optimization and combinatorial
optimization including the max-cut
problem and the theta problem in section
\ref{sec:num}. 
\revise{The storage of the variables $X$ and  $R$ is discussed in Section \ref{sec:extension}.}

\subsection{Literature review}
Due to the extensive applications of SDP, it is of great interest to develop efficient algorithms to solve large-scale SDPs, especially those with complex constraints and general objective functions. The class of interior-point methods contains the most popular polynomial-time algorithms for solving linear SDP with small to medium scale problems. Nevertheless, interior point methods are not very efficient for sparse or large-scale problems due to the computational cost of second-order search directions. A detailed description of interior-point methods for solving SDP can be found in \cite{hosdp}. \revise{The paradigm based on the augmented Lagrangian framework has also been extensively studied.  Yang et al. \cite{sdpnalp} propose a majorized semismooth Newton-CG augmented Lagrangian method, called SDPNAL+, for SDP problems with nonnegative constraints.   A two-phase augmented Lagrangian method, called QSDPNAL,  is developed by Li et al. \cite{qsdpnal} to solve the convex quadratic SDP problem. 
}

The major challenge in solving the linear SDP is the constraint $X\succeq 0$. To circumvent this constraint, Burer and Monteiro \cite{sdplr} recast the linear SDP by factorizing $X=R^TR$ with $R\in \mbR^{p\times n}$.
Then, they apply the augmented Lagrangian method (ALM) and use \revise{a} quasi-Newton method to solve the factorized subproblem. 
In theory, the linear SDP has a global optimum with rank at most $p$, where \revise{ $p$ is the largest integer such that} $\frac{p(p+1)}{2}\leq \revise{m+m_0}$, see \cite{otroe,podga}. Based on this observation, a global solution of the \revise{linear} SDP can be found from the factorized problem with a suitable rank $p$. 
 Sahin et al. \cite{aialf} propose an inexact ALM framework and establish the convergence to \revise{a} first (or second) order optimum of the factorized problem by solving the ALM subproblem with a first (or second) order approximate stationary point. 

Following the non-convex Burer-Monteiro approach, Journ\'ee et al. \cite{lroot} and Boumal et al. \cite{tncbm} reformulate the linear SDP by leaving all linear constraints to $\mcD$. They show that for $\mcB$ satisfying certain conditions, the set $\mcM=\{R\in \mbR^{p\times n}: \mcB(R^TR)= b_0\}$ can be viewed as a smooth manifold. 
Journ\'ee et al. \cite{lroot} 
provide a sufficient condition to ensure that $\mcM$ is a smooth manifold. 
In this case, the factorized problem becomes a Riemannian optimization problem.
If $R$ is a local optimum and $R$ is rank-deficient, then $R$ is a global optimum of this Riemannian optimization problem. 
To solve this Riemannian optimization problem, Journe\'e et al. apply the
Riemannian trust-region method \cite{oaomm}. 
Under a milder assumption on the smooth
manifold $\mcM$, Boumal et al. \cite{tncbm} show that for $\frac{p(p+1)}{2}> \revise{m+m_0}$,
for almost all $C\in \mbS^n$, any second-order critical point $R$ of the
Riemannian optimization problem is globally optimal and $X=R^TR$ is global
optimal to the linear SDP. Similarly, they apply the Riemannian trust-region
method and provide a global convergence rate estimation by applying convergence
results in \cite{grocf}. 

The idea of factorizing $X=R^TR$ to bypass the semidefinite constraint is also presented in the following works. Shah et al. \cite{brfsp} consider the case where $C$ and $A_i$ are all positive semidefinite matrices. Then, the factorized problem can be \revise{relaxed}  into a convex optimization problem. They also propose a general initialization scheme to start close to a global optimum.
Bhojanapalli et al. \cite{dcffs} focus on the convex SDP without constraints and apply the gradient descent method on the factorized problem with a carefully designed step size. The non-convexity of the factorized problem is partially overcome via some initialization techniques. 

\subsection{Our contribution}
         We develop a decomposition-based augmented Lagrangian framework, named SDPDAL\revise{, which} can solve general semidefinite programming problems with nonlinear objective functions, nonsmooth regularizations, and general linear constraints. Decomposition-based methods \cite{sdplr,lroot,brfsp} \revise{have} primarily \revise{focused} on solving linear SDPs\revise{,} and they 
    cannot easily be extended to \revise{tackle} \eqref{sdp:general}. Our proposed method retains some of the linear constraints and deals with the remaining constraints using ALM and variable splitting. We factorize $X = R^TR$ for the ALM subproblem whose gradient is semi-smooth, and the retained linear constraints form a Riemannian manifold. Then, an adaptive regularized semismooth Newton method on manifold is developed.  The reason for introducing the manifold is to take full advantage of the problem structure and reduce the numerical difficulty during the update of multipliers. 
    Different from SDPLR \cite{sdplr}, we do not apply the L-BFGS method to solve the subproblem since some of the constraints are preserved as a manifold and \text{the objective function has a nonsmooth term}. 
    Our method is also distinguished from \cite{lroot} in the sense that we are able to deal with general constraints which may be hard to reformulate into a manifold.
    Consequently, SDPDAL extends the types of SDPs that a decomposition-based method can solve, while it inherits the advantage on large-scale low-rank problems.

Properties of the factorized subproblem with respect to $R$ from ALM are theoretically
analyzed \blue{and they provide interesting insights into the convergence and
    efficient implementation of SDPDAL}. We show that \revise{a} rank-deficient local minimum is also globally
optimal, \revise{and in} a small neighborhood of the global optima, any critical
point is shown to be globally optimal. We also bound the optimality gap of the
subproblem by the gradient norm and the smallest eigenvalue of a dual variable
$S\in \mbS^n$. \blue{Although our algorithm does not guarantee that $S$ will
become positive semidefinite, our numerical implemantion shows that the negative
eigenvalues of $S$ will vanish. If the initial point $R_0$ is assumed to be  close to
 an optimal solution $R^*$, we show that the optimality gap will be bounded by
 the gradient norm. A complexity analysis and an asymptotic convergence rate
 of the semismooth Newton method are 
 established for the factorized problem  under certain mild assumptions. In
 particular, we
 bound the maximal number of iterations to reach a first-order critical point
 and prove the global convergence.  
     A few numerical strategies,  such as
 saddle points escaping and initialization  using  first-order algorithms, 
are also proposed to meet those assumptions (see the details in \cite{WangDengLiuWen2021arxiv}).}
 Finally, the convergence to the KKT pairs of the factorized problem \blue{is fit
 together} and
 the convergence to the optimal solution for the original SDP are established
 under certain mild conditions.  
        For \revise{the} convex \revise{case of} \eqref{sdp:general}, we
        formulate its dual and construct \revise{values for the} dual variables
        from the primal variables. Therefore, stopping criteria can be defined
        based on the KKT conditions so that a robust implementation of SDPDAL is
        possible for solving the primal problem. On max-cut \revise{problem},
        theta problem, clustering and sparse principal component analysis,
        SDPDAL outperforms several state-of-the-art solvers, especially
        \revise{for} problems where the dimension of $X$ is larger than 5000.
\revise{
\subsection{Notation} Given a matrix $A$, we use $\|A\|_F:=\sqrt{ \sum_{ij}A_{ij}^2}$ to denote its Frobenius norm, $\|A\|_1:=\sum_{ij}|A_{ij}|$ to denote its $\ell_1$ norm, and $\|A\|_2: = \sigma_{\max}(A)$ to denote its spectral norm, where $\sigma_{\max}(A)$ represents the largest singular value of matrix $A$. For a linear operator $\mcA$, we use $\|\mcA\|_{\text{op}}$ to represent its operator norm. For a vector $x$, we use $\|x\|_2$ and $\|x\|_1$ to denote its Euclidean norm and $\ell_1$ norm, respectively.
We define $\lra{A,B} = \tr(A^TB)$ 
as the inner product of any $A, B \in \mathbb{R}^{m\times n}$.
} 
\subsection{Organization}
This paper is organized as follows. In section \ref{sec:alm}, we introduce the
ALM framework, apply the low-rank factorization to the ALM subproblem and
develop an adaptive semismooth Newton method. A theoretical analysis
of the subproblem properties and convergence analysis for both the subproblem
and ALM are provided in section \ref{sec:thm}. 
Numerical experiments are presented in section \ref{sec:num}.

\section{Augmented Lagrangian method with low-rank factorization}\label{sec:alm}
In this section, we present a framework of the augmented Lagrangian method (ALM) for SDP with the form \eqref{sdp:general}. 
To solve the subproblem in ALM, we propose an adaptive regularized Riemannian Newton method.
\subsection{Formulation of the optimization problem}
For the linear constraints $\mcA(X)=b$ and $\mcB(X)=b_0$ in \eqref{sdp:general}, $\mcA$ can be an arbitrary linear operator while $\mcB$ shall have certain structures.  
In this paper, we assume that the following statement holds.
\begin{assumption}\label{assumption1}
\text{ }
\begin{enumerate}[label={\textbf{\Alph*:}},
  ref={\theassumption.\Alph*}]
  \item \label{assump-B}%
    The problem \eqref{sdp:general} has a low-rank optimal solution. The
    objective function is bounded from below. The function $f(X)$ is convex and
    twice-differentiable and the gradient $\nabla f(X)$ is Lipschitz continuous, while $h(X)$ is a convex, lower semicontinuous, and proper function.
\item \label{assump-A}
    For $X\in \mcD$, $\{B_iX \}_{i=1}^{m_0}$ are linear independent. $\mcM=\{R\in \mbR^{p\times n}|R^TR\in \mcD\}$ is a compact submanifold embedded in the Euclidean space $\mbR^{p\times n}$.
\end{enumerate}
\end{assumption}
Typical examples of the domain $\mcD$ include
\begin{equation}\label{equ:d}
\begin{aligned}
&\mcD = \{X \mid \tr(X)=1,\; X\succeq 0\}, \\
&\mcD = \{X\mid X_{i,i}=1,\; X\succeq 0\}, \\
&\mcD = \{X \mid X_{i_{j-1}+1:i_{j}, i_{j-1}+1:i_{j}}=I_{d_j}, 1\leq j\leq q,\; X\succeq 0\}.
\end{aligned}
\end{equation}
For the last example, $0=i_0<i_1<\dots<i_q=n$ and $d_j=i_{j}-i_{j-1}$ for $1\leq j\leq q$. We factorize $X=R^TR$ where $R\in \mbR^{p\times n}$, and denote
\begin{equation}\label{equ:m}
\mcM=\{R\in \mbR^{p\times n}|R^TR\in \mcD\}=\{R\in \mbR^{p\times n}|\mcB(R^TR)=b_0\}.
\end{equation}
\blue{Assumption
\ref{assump-A} ensures that $\mcM$ is a smooth manifold, see \cite[Assumption 1.1]{boumal2020deterministic}.} For examples of $\mcD$ in \eqref{equ:d}, $\mcM$ corresponds to different smooth manifolds:
\begin{equation}\label{specified example}
\begin{aligned}
&\mcM = \{R\in \mbR^{p\times n}| \|R\|_F=1\};\\
&\mcM = \{R\in \mbR^{p\times n}| R=[r_1,\dots, r_n],\; \|r_i\|_2=1\};\\
&\mcM = \{R\in \mbR^{p\times n}| R=[R_1,\dots, R_q],\; R_j\in \mbR^{p\times d_j},\; R_j^TR_j=I_{d_j}\},
\end{aligned}
\end{equation}
where $\|R\|_F$ represents the Frobenius norm of $R$. The relationship between the SDP
problem respect to $X$ and the factorized problem respect to $R$ will be
mentioned in Proposition \ref{propi:connection}. We note that all examples in \eqref{equ:d} satisfy Assumption \ref{assump-A}. 


\subsection{An augmented Lagrangian method based on splitting}
The problem \eqref{sdp:general} can be rewritten as
\begin{equation}\label{prob:p2}
\begin{aligned}
\min_{X\in \mcD, W\in \mbS^n} \quad  f(X)+h(W),\quad \text{s.t.} \quad \mcA (X)=b, \quad X=W.
\end{aligned}
\end{equation}
We apply ALM to solve \eqref{prob:p2}. Denote the augmented Lagrangian function associated with \eqref{prob:p2} by
\begin{equation}\label{aug_L:1}
\begin{aligned}
L_\sigma(X,W,y,Z) = &f(X)+h(W)-y^T(\mcA(X)-b)-\lra{Z,X-W}\\
&+\frac{\sigma}{2}\pp{\|\mcA(X)-b\|_2^2+\|X-W\|_F^2},
\end{aligned}
\end{equation}
where $y\in \mbR^m,Z\in \mbS^n$ and $\sigma>0$ is a parameter for ALM. For $A,B\in \mbR^{n\times n}$, $\lra{A,B}=\tr(A^TB)$ represents the Euclidean inner product in $\mbR^{n\times n}$. The domain of the primal variables $(X,W)$ is $X\in \mcD$ and $W\in \mbS^n$. The $k$-th iteration of the ALM is given as follows:
\begin{align}
  (X^{k+1}, W^{k+1}) &= \mathop{\argmin}_{X\in \mcD, W\in \mbS^n} L_{\sigma_k}(X,W,y^{k},Z^{k}),\label{sub:xw} \\
  y^{k+1} &=y^k-\alpha_k\sigma_k(\mcA (X^{k+1}) -b),\notag\\
  Z^{k+1} &=Z^k-\alpha_k\sigma_k(X^{k+1} -W^{k+1}).\notag
\end{align}
For a fixed $X$, the optimal solution of $W$ to \eqref{sub:xw} follows
\begin{equation}\label{equ:w}
W = \prox_{h/\sigma_k}(X-Z^k/{\sigma_k}),
\end{equation}
where $\prox_{h}(W)$ is the proximal mapping of a convex function $h$ defined by
$
\prox_{h}(W)=\argmin_{V\in \mbS^n} h(V)+\frac{1}{2}\|V-W\|_F^2.
$
Denote
\[
\begin{aligned}
  \Phi_k(X)  :=& \inf_{W\in\mbS^n} L_{\sigma_k}(X,W,y^{k},Z^{k})
  = f(X)+h(\prox_{h/\sigma_k}(X-Z^k/{\sigma_k})) - \frac{1}{2\sigma_k}\|Z^k\|_F^2\\
            &  +\frac{{\sigma_k}}{2}\Big(\|\mcA(X)-b-y^k/{\sigma_k}\|_2^2 +\|
            X-Z^k/{\sigma_k}-\prox_{h/\sigma_k}(X-Z^k/{\sigma_k})
            \|_F^2\Big).\\
\end{aligned}
\]
Then, the optimal $(X^{k+1}, W^{k+1})$ for \eqref{sub:xw} can be computed as follows:
\[
X^{k+1} = \argmin_{X\in \mcD}\Phi_k(X),~~~~~W^{k+1} = \prox_{h/\sigma_k}(X^{k+1}-Z^k/{\sigma_k}).
\]
According to the Moreau decomposition, we have
$ X - \prox_{th}(X) =t\prox_{h^*/t}(X/t), $
where $h^*$ is the conjugate function of $h$ defined by
$ h^*(W) = \sup_{V\in \mbS^n} \{\lra{U,V}-h(V)\}$.
Denote $T(X)=\sigma_k^{-1}\prox_{\sigma_k h^*}(\sigma_kX-Z^k)$. By ignoring the constant term, we can rewrite the minimization problem of $X$ as
\begin{equation}\label{sub:xk}
\begin{aligned}
\min_{X\in \mcD}\Phi_k(X) =& f(X)+h(X-Z^k/{\sigma_k}-T(X))\\
&+\frac{{\sigma_k}}{2}\Big(\|\mcA(X)-b-y^k/{\sigma_k}\|_2^2+\|T(X)\|_F^2\Big).
\end{aligned}
\end{equation}

To solve the subproblem \eqref{sub:xk}, we factorize $X=R^TR$. Instead of directly minimizing \eqref{sub:xk}, we consider the following Riemannian optimization problem:
\begin{equation}\label{sub:rk}
\min_{R\in \mcM} \Psi_k(R):=\Phi_k(R^TR).
\end{equation}
Because $\Phi_k(X)$ is continuously differentiable but may not be twice continuously differentiable, we apply an adaptive regularized semismooth Newton method to \eqref{sub:rk}\revise{. This is} discussed with details in subsection \ref{ssec:ssn}. Suppose that $R^{k+1}$ is an approximate solution to \eqref{sub:rk}. Then, we use $X^{k+1}=(R^{k+1})^TR^{k+1}$ as an approximate solution to \eqref{sub:xk}. 
The overall algorithm is summarized in Algorithm \ref{alg:alm_ssn}.

\revise{
We emphasize that computing $X = R^TR$ explicitly is often not required when only a small number of elements $X_{ij}$ is needed or the operations can be performed on $R$ directly. For example, i) $f(X)$ is linear,
ii) $A_1,\ldots,A_m$ are sparse or low-rank and iii) $h(X) = 0$.
By utilizing the fact that $\lra{C,R^TR} = \lra{RC,R}$, we are able to
compute $f(R^TR)$, $\mathcal{A}(R^TR)$, and $R\nabla f(R^TR)$ directly via
 matrix multiplications. However, when elementwise operations on $X$ are involved,  we need to form $X = R^TR$ explicitly.
}

\begin{algorithm}[htbp]
\caption{The SDPDAL method (prototype)}\label{alg:alm_ssn}
\begin{algorithmic}[1]
\REQUIRE Initial trial point $R^0\in \mcM$ 
, ALM step size $\alpha_k$, parameters $\sigma_k>0$.
\STATE Set $k=0, y^k=0,Z^k=0$. 
\WHILE {not converge}
\STATE Obtain $R^{k+1}$ by solving \eqref{sub:rk} inexactly. Formulate $X^{k+1}=(R^{k+1})^TR^{k+1}$ \revise{either explicitly or implicitly}.
\STATE Update 
$
W^{k+1} = \prox_{h/\sigma_k}(X^{k+1}-Z^k/{\sigma_k}).
$
\STATE Update Lagrangian multipliers $y^{k+1}$, $Z^{k+1}$ by
{\setlength\abovedisplayskip{5pt}
\setlength\belowdisplayskip{-5pt}
$$
\begin{aligned}
y^{k+1} = &y^k-\alpha_k\sigma_k(\mcA(X^{k+1})-b),\;
Z^{k+1} = Z^k-\alpha_k\sigma_k(X^{k+1}-W^{k+1}).
\end{aligned}
$$}
\STATE Update $\sigma_{k+1}$ \revise{and $\alpha_{k+1}$}.

 Set $k=k+1$.
\ENDWHILE
\end{algorithmic}
\end{algorithm}

\subsection{Calculation details}

To apply the adaptive regularized Riemannian Newton method, we first \revise{introduce the following definitions to} characterize the generalized Hessian of $\Phi_k(X)$.
\begin{definition}
Let $\mcO\subseteq \mbR^n$ be an open set and $P:\mcO\to \mbR^m$ be locally Lipschitz continuous at $x\in \mcO$. Denote by $\revise{D(P)}$ the set of differentiable points of $P$ in $\mcO$. The B-subdifferential of $P$ at $x$ is defined by
$$
\p_B P(x):=\left\{\lim_{k\to \infty}P'(x^k)|x^k\in \revise{D(P)},x^k\to x\right\}. 
$$
The set $\p P(x)=\operatorname{co}(\p_B P(x))$ is called Clarke differential of $P$ at $x$, where $\operatorname{co}$ denotes the convex hull.
\end{definition}
\revise{
\begin{definition}
A locally Lipschitz continuous operator $P$ is called semismooth at $x$ if
 $P$ is directional differentiable at $x$,  and for all $d$ and $J \in \partial P(x+d)$, it holds that
\[ \| P(x+d) -  P(x) - Jd \|_2 = o(\|d\|_2), \;\; d \rightarrow 0. \]
We say $P$ is semismooth if $P$ is semismooth for any $x \in \mbR^m$.
\end{definition}
}

We note that $h(X-Z^k/{\sigma_k}-T(X))+\frac{\sigma_k}{2}\|T(X)\|^2_2$ is the Moreau envelop function, and its gradient is simply $\sigma_kT(X)$.
Hence, the Euclidean gradient of $\Psi_k(R)$ is
\[
\begin{aligned}
\nabla  \Psi_k(R) 
=& 2R(\nabla f(R^TR)+{\sigma_k} \mcA^*(\mcA(R^TR)-b-y^k/{\sigma_k})+{\sigma_k} T (R^TR)).
\end{aligned}
\]
According to the property of the proximal mapping, $T(X)$ is strongly semismooth \cite{smvf}. Via the Clarke differential, the following operator is well-defined:
\begin{equation}
\begin{aligned}
\hat \p^2 \Psi_k(R)[U]
=& 2U(\nabla f(R^TR)+{\sigma_k} \mcA^*(\mcA(R^TR)-b-y^k/{\sigma_k}))\\
&+2{\sigma_k} R\mcA^*\mcA(R^TU+U^TR)+2R\nabla^2 f(R^TR)[R^TU+U^TR]\\
&+2{\sigma_k} U T(R^TR)+2\sigma_k R \p T(R^TR)[R^TU+U^TR].
\end{aligned}
\end{equation}
From \cite{ghmas}, the generalized Hessian operator $\p^2 \Psi_k(R)$
can be expressed as
$
\p^2 \Psi_k(R)[U]=\hat \p^2 \Psi_k(R)[U].
$
Then the Riemannian gradient writes
\[\grad  \Psi_k(R) = \mfP_{T_R\mcM}(\nabla \Psi_k(R)),
\]
 where \revise{ $T_R\mcM$ is the tangent space at $R\in \mcM$ defined as $T_R\mcM = \{U~|~\mcB(R^TU + U^TR) = 0\}$ and}  $\mfP_{T_R\mcM}$ is the projection into the tangent space at $R\in \mcM$. On the other hand, 
the generalized Riemannian Hessian satisfies
\begin{equation}\label{equ:r_hess}
\Hess  \Psi_k(R)[U] = \nabla_U \grad \Psi_k(R) =   \mfP_{T_R\mcM}(\p^2  \Psi_k(R)[U])+\mathfrak{W}_R(U,\mfP^\perp_{T_R\mcM} (\nabla \Psi_k(R))),
\end{equation}
where $\mathfrak{W}_R(U,V) = -\mfP_{T_R\mcM}D_U V$ with $U\in T_R\mathcal{M}, V\in T_R^\perp\mcM$ is a symmetric linear operator, \revise{and $D_U(V) = \lim_{t\rightarrow 0}V(\gamma(t))$, where $\gamma$ is any curve on $\mcM$ with $\gamma(0) = R, \gamma'(0) = U$. This}  is related to the second fundamental form of $\mcM$. Detailed definitions can be found in \cite{aelat}.

Denote $\mcB^*:\mbR^{m_0}\to \mbS^n$ the adjoint operator of $\mcB$ defined by $\mcB^*(v)=\sum_{i=1}^{m_0} B_iv_i$. \revise{By Assumption \ref{assump-A},} let $u(X)$ be the unique solution to 
\begin{equation}\label{equ:u}
\mcB(X(\nabla\Phi_k(X)-\mcB^*(u)))=0
\end{equation}
and denote
\begin{equation}\label{equ:S}
S(X)=\nabla\Phi_k(X)-\mcB^*(u(X)).
\end{equation}
When there is no confusion, we omit the variable in bracket and use $u,S$ to
represent $u(X),S(X)$ or $u(R^TR),S(R^TR)$, respectively. \revise{Note that $u,X$ are corresponding to the Lagrangian multiples of the constraints $\mcB(X) = b_0$ and $X\succeq 0$, respectively. We will further discuss it in the dual formulation \eqref{prob:d}.} We present the
detailed calculation of the Riemannian gradient and generalized Riemannian Hessian in the following proposition. 
 The proof is omitted due to page limit and can be found in
 \cite{WangDengLiuWen2021arxiv}.
\begin{proposition}\label{prop:rgrad}
1) The Riemannian gradient satisfies
\begin{equation}\label{equ:r_grad}
    \grad  \Psi_k(R)=\nabla  \Psi_k(R)-2\sum_{i=1}^{m_0}u_iRB_i,
\end{equation}
where $u_i$ denotes the $i$-th entry of $u(R^TR)$.
2) For $U\in T_R\mcM$, the generalized Riemannian Hessian is
\begin{equation}\label{equ:r_hess_c}
\Hess \Psi_k(R)[U]  = 2U\nabla \Phi_k(R^TR) +2R\hat \p^2 \Phi_k(R^TR)[U^TR+R^TU]-2R\mcB^*(u')-2U\mcB^*(u),
\end{equation}
where $u\in \mbR^{m_0}$ denote $u(R^TR)$ and $u'\in \mbR^{m_0}$ satisfies
\[
\mcB\pp{R^TR\pp{\hat\p^2  \Phi_k(R^TR)[U^TR+R^TU]-\mcB^*(u')}}=0.
\]
\revise{ By \eqref{equ:S}, we further have
\begin{equation}\label{equ:hess_prod}
    \lra{U,\Hess  \Psi_k(R)[U]}  = 2\tr(USU^T)+\lra{U^TR+R^TU, \hat \p^2 \Phi_k(X)[U^TR+R^TU]}.
\end{equation}}
\end{proposition}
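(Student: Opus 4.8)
The plan is to reduce both claims to the orthogonal splitting $\mbR^{p\times n}=T_R\mcM\oplus N_R\mcM$ and to the defining linear systems for $u,u'$. Write $X=R^TR$. Differentiating each constraint $g_i(R)=\tr(B_iR^TR)$ in a direction $U$ gives $Dg_i(R)[U]=\tr(B_i(R^TU+U^TR))=\lra{2RB_i,U}$ (using $B_i^T=B_i$ and cyclicity), so
\[
T_R\mcM=\{U:\mcB(R^TU+U^TR)=0\},\qquad N_R\mcM=\operatorname{span}\{RB_1,\dots,RB_{m_0}\}.
\]
Assumption \ref{assump-A} says $\{B_iX\}_i$ are independent, hence so are $\{RB_i\}_i$ (if $\sum c_iRB_i=0$, left-multiplying by $R^T$ gives $\sum c_iXB_i=0$, i.e. $\sum c_iB_iX=0$ after transposing); thus $\dim N_R\mcM=m_0$ and, for any \emph{symmetric} $G$, the system $\mcB(X\mcB^*(\mu))=\mcB(XG)$ has the unique solution $\mu=M^{-1}c$ with Gram matrix $M_{ji}=\lra{RB_j,RB_i}$. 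A key simplification I will use repeatedly: for symmetric $G$ the vector $2R(G-\mcB^*(\mu))$ is tangent iff $\mcB(XG)=\mcB(X\mcB^*(\mu))$, because $\tr(B_j X N)=\tr(B_j N X)$ for symmetric $N$ collapses the two-sided tangency condition to the one-sided one. Both $u$ (with $G=\nabla\Phi_k(X)$, which is \eqref{equ:u}) and $u'$ (with $G=\hat\p^2\Phi_k(X)[R^TU+U^TR]$) are instances of this well-posed system.

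For part 1) I first compute the Euclidean gradient by the chain rule: since $\nabla\Phi_k$ is symmetric, $D\Psi_k(R)[U]=\lra{\nabla\Phi_k(X),R^TU+U^TR}=\lra{2R\nabla\Phi_k(X),U}$, so $\nabla\Psi_k(R)=2R\nabla\Phi_k(X)$. Now decompose $\nabla\Psi_k(R)=2RS+2R\mcB^*(u)$ with $S=\nabla\Phi_k(X)-\mcB^*(u)$. The summand $2R\mcB^*(u)=2\sum_iu_iRB_i$ lies in $N_R\mcM$ by construction, while $2RS$ lies in $T_R\mcM$: by the reduction above, $\mcB(R^T(2RS)+(2RS)^TR)=4\mcB(XS)$, which vanishes precisely by \eqref{equ:u}. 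Uniqueness of the orthogonal decomposition then yields $\grad\Psi_k(R)=\mfP_{T_R\mcM}(\nabla\Psi_k(R))=\nabla\Psi_k(R)-2\sum_iu_iRB_i=2RS$, which is \eqref{equ:r_grad}.

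For part 2) I start from the embedded-submanifold formula \eqref{equ:r_hess} and substitute the generalized Euclidean Hessian $\p^2\Psi_k(R)[U]=2U\nabla\Phi_k(X)+2R\,\hat\p^2\Phi_k(X)[R^TU+U^TR]$. The Weingarten term is evaluated at the normal vector $\mfP^\perp(\nabla\Psi_k(R))=2R\mcB^*(u)$ from part 1. Since $\mathfrak{W}_R(U,\cdot)$ depends only on the value of the normal field, I extend it holding $u$ fixed, $R'\mapsto 2R'\mcB^*(u)$, which is normal at every nearby point of $\mcM$; hence $D_U(2R\mcB^*(u))=2U\mcB^*(u)$ and $\mathfrak{W}_R(U,2R\mcB^*(u))=-\mfP_{T_R\mcM}(2U\mcB^*(u))$. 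Combining and folding the $\mcB^*(u)$ terms into $S$ gives
\[
\Hess\Psi_k(R)[U]=\mfP_{T_R\mcM}\!\big(2US+2R\,\hat\p^2\Phi_k(X)[R^TU+U^TR]\big).
\]
Projecting the $\hat\p^2\Phi_k$-block exactly as in part 1 peels off its normal component $2R\mcB^*(u')$, with $u'$ the unique solution of $\mcB(R^TR(\hat\p^2\Phi_k(X)[R^TU+U^TR]-\mcB^*(u')))=0$, which reproduces the stated expression \eqref{equ:r_hess_c} (up to the normal part of $2US$; see below).

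Finally I derive the quadratic form \eqref{equ:hess_prod} by pairing \eqref{equ:r_hess_c} with $U$: grouping $2U\nabla\Phi_k(X)-2U\mcB^*(u)=2US$ and using cyclicity gives $\lra{U,2US}=2\tr(U^TUS)=2\tr(USU^T)$; symmetry of $\hat\p^2\Phi_k(X)[R^TU+U^TR]$ turns $\lra{U,2R\,\hat\p^2\Phi_k(X)[\cdot]}$ into $\lra{R^TU+U^TR,\hat\p^2\Phi_k(X)[R^TU+U^TR]}$; and the term $-2R\mcB^*(u')$ vanishes because every normal vector $R\mcB^*(\cdot)\in N_R\mcM$ is orthogonal to the tangent vector $U$. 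I expect the main obstacle to be the second-order/Weingarten term: one must treat the second fundamental form of $\mcM$ correctly and track which pieces are tangent versus normal, since \eqref{equ:r_hess_c} equals the true (tangent) Riemannian Hessian only modulo the normal component of $2US$ — and it is exactly this normal slack, orthogonal to $U$, that is annihilated when forming the quadratic form used in the second-order analysis.
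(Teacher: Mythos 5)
Your proof is correct. The paper itself omits the proof of this proposition (deferring to the arXiv version), but your route — identifying $N_R\mcM=\operatorname{span}\{RB_i\}$, reducing the two-sided tangency condition to $\mcB(XN)=0$ for symmetric $N$, splitting $\nabla\Psi_k(R)=2RS+2R\mcB^*(u)$, and then feeding the Euclidean generalized Hessian and the Weingarten term $-\mfP_{T_R\mcM}(2U\mcB^*(u))$ into the paper's own formula \eqref{equ:r_hess} — is exactly the calculation the statement is built on, and every step (well-posedness of the systems for $u,u'$ via the Gram matrix, the frozen-$u$ normal extension, the cancellations in the quadratic form) checks out. Your observation that \eqref{equ:r_hess_c} as written retains the normal component of $2US$ and therefore agrees with the true (tangent) Riemannian Hessian only up to a normal vector is a genuine and worthwhile remark; since that discrepancy is orthogonal to $U\in T_R\mcM$, it is invisible in \eqref{equ:hess_prod}, which is the form actually used in the subsequent analysis.
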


\begin{remark}
When $B_iB_j=0$ for $1\leq i<j\leq m_0$, then $u$ has a closed-form formula:
$ u_i = \frac{\tr(X\nabla \Phi_k(X)B_i)}{\tr(B_iXB_i)}$.
\end{remark}

\subsection{A semismooth Newton method on manifold}\label{ssec:ssn}
Motivated by the Adaptive Regularized Newton Method for Riemannian Optimization (ARNT) \cite{arnt}, we introduce an adaptive regularized Riemannian semismooth Newton method for solving \eqref{sub:rk} to a high-precision.
\revise{At a point $R_l$,} we consider the following subproblem
%
\begin{equation}\label{sub:mk}
\min_{U\in T_{R_l}\mcM}  m_l(U)=\lra{\grad \Psi_k(R_l),U}+\frac{1}{2} \lra{ \mfH_l[U],U}+\frac{\nu_l}{2} \|U\|_F^2,
\end{equation}
where $\mfH_l\in \Hess \Psi_k(R_l)$ is a generalized Riemannian Hessian operator and $\nu_l>0$ is a regularization parameter. 
The construction of the subproblem \eqref{sub:mk} in the tangent vector space is different from that in ARNT.

In each step, we inexactly solve the following linear equation:
\begin{equation}\label{sub:mk_linear}
\grad \Psi_k(R_l)+\mfH_l[U]+\nu_l U=0,
\end{equation}
where the  step $U_l\in T_{R_l} \mathcal{M}$ satisfies the first-order condition:
\begin{equation}\label{condition}
  \|\nabla m_l(U_l)\|_{\revise{F}} \leq \theta \|U_l\|_{\revise{F}},~~ \revise{m_l(U_l)\leq 0,}
\end{equation}
\revise{where $\theta\geq 0$ is a constant. The above condition has been used in manifold optimization  \cite{agarwal2021adaptive}.} We apply the modified conjugate gradient (mCG) method in \cite{arnt} to solve the problem  \eqref{sub:mk_linear}. Since the Riemannian generalized Hessian operator $\mfH_l$ may not be positive definite, we also terminate the CG method when a negative or small curvature is encountered. 
Then, we construct a gradient-related direction based on conjugated directions and perform a curvilinear search in this direction to ensure that the output $U_l$ of the mCG algorithm is a descent direction, i.e.,
$ \lra{U_l, \grad \Psi_k(R_l)}<0. $
This is justified in Lemma 7 in \cite{arnt}. Once an approximate solution $U_l$ of \eqref{sub:mk_linear} is obtained, we perform a line search along $U_l$ to generate a trial point
\[ \bar R_l = \mcR_{R_l}(\revise{s_l}U_l),\]where $\mcR$ is a retraction operator on $\mcM$. Here the step size $\revise{s_l}=\delta^h$ is chosen by the Armijo condition such that $h$ is the smallest nonnegative integer satisfying
$ m_l(\delta^hU_l)\leq \mu\delta^h \lra{\nabla m_l(0), U_l}, $
where $\mu>0$ is a parameter for the line search.

Let $0<\eta_1\leq \eta_2<1$ be the parameters. To decide whether to accept $\bar R_l$ or not, we compute the following ratio between the actual reduction and the predicted reduction
\begin{equation}\label{equ:rhol}
\rho_l = \frac{\Psi_k(\bar R_l)- \Psi_k(R_l)}{m_l(U_l) - \frac{\nu_l}{2}\|\revise{U_l}\|_F^2}.
\end{equation}
If $\rho_l\geq \eta_1>0$, then the iteration is successful and we set $R_{l+1}=\bar R_l$. Otherwise the iteration fails and we set $R_{l+1} = R_l$. The regularization parameter $\nu_{l+1}$ is updated as follows
\begin{equation}
 \label{alg:tau-up} \nu_{l+1} \in \begin{cases} [\max (\nu_{\min},\gamma_0\nu_l),\nu_l], & \mbox{  if }
	\rho_l \ge \eta_2, \\ [\nu_l,\gamma_1 \nu_l], & \mbox{  if } \eta_1 \leq \rho_l
	< \eta_2, \\ [\gamma_1 \nu_l, \gamma_2 \nu_l], & \mbox{  otherwise}, \end{cases}
\end{equation}
where $0<\gamma_0<1<\gamma_1\leq \gamma_2$ \revise{ and $ \nu_{\min}$} are parameters. The regularized semismooth Newton method to solve the $R$-subproblem \eqref{sub:rk} is summarized in Algorithm \ref{alg:ssn_r}.

\begin{algorithm}[ht]
\caption{Semismooth Newton method for \eqref{sub:mk}}\label{alg:ssn_r}
\begin{algorithmic}[1]
\STATE Input: an initial point $R_0\in \mcM$, $\nu_0>0$, a maximum iteration number \revise{ $K$}. 
 Choose $0<\eta_1\leq \eta_2<1,0<\gamma_0<1<\gamma_1\leq \gamma_2$.
 Set $l=0$.
\WHILE {$l<\revise{K} $ \revise{ \text{ and not converge}}}
\STATE Compute a new trial point $\bar R_l$ by solving \eqref{sub:mk} with mCG.
\STATE Compute the ratio $\rho_l$ via \eqref{equ:rhol}.
\STATE Update $R_{l+1}=  \bar R_l$ if $\rho_l\geq \eta_1$, and $R_{l+1}=  R_l$ if $\rho_l< \eta_1$.
\STATE Update $\nu_{l+1}$ via \eqref{alg:tau-up}.
\STATE Set $l=l+1$.
\ENDWHILE
\end{algorithmic}
\end{algorithm}

\subsection{Extension of SDPDAL}\label{sec:extension}
Our algorithmic framework is compatible with the general SDP problem with inequality constraints, i.e.,
\begin{equation}\label{prob:ineq}
\begin{aligned}
\min_{X\in \mcD}& \quad  f(X)+h(X),\quad \text{s.t.}  \quad\mcA_E(X)=b_E, \quad \mcA_I(X) \leq b_I,
\end{aligned}
\end{equation}
where $\mcA_E:\mbS^n\rightarrow \mbR^{m_E}$ and $\mcA_I:\mbS^n\rightarrow \mbR^{m_I}$ are two linear maps.
\revise{We rewrite \eqref{prob:ineq} as
\blue{\begin{equation}\label{prob:ineq2}
\begin{aligned}
\min_{X\in \mcD,W\in\mathbb{S}^n,s\geq 0}& \;  f(X)+h(W),\; \text{s.t.}  \; \mcA_E(X)=b_E, \mcA_I(X) + s = b_I, X = W.
\end{aligned}
\end{equation}}
Similar to \eqref{sub:xk},  $\Phi_k(X)$ is constructed by minimizing $W$ and $s \geq 0$
simultaneously. Note that the resulting subproblem is still semi-smooth.}
%

SDPDAL can also handle the \emph{nonconvex} SDP with the following form \cite{ijcai2019-157}:
\begin{equation} \label{eqn:epsdp}
\min_{X\in \mcD} \quad  f(X)+h(X)+\lambda E_{\alpha}(X),\quad \text{s.t.}  \quad\mcA(X)=b,
\end{equation}
where $E_{\alpha}(X)$ is a nonconvex entropy penalty with parameter $\alpha$, and $\lambda > 0$ is the regularization parameter.
\revise{ Examples of entropy penalty terms include Tsallis entropy \cite{Tsallis1988} and R\'enyi entropy \cite{Renyi1960}.
Problem \eqref{eqn:epsdp}
is a special case of \eqref{sdp:general} by considering $f(X) + \lambda E_{\alpha}(X)$ as the smooth term. However, we mention that
$E_{\alpha}(X)$ may be a \emph{nonconvex} penalty function thus \eqref{eqn:epsdp} is a nonconvex SDP.
In many real applications, problem \eqref{sdp:general} is lifted from combinatorial optimization problems, which require a rounding procedure to recover the solution to the original problem after solving \eqref{sdp:general}.}
The role of $E_{\alpha}(X)$ is to promote a low-rank (or even rank-one) solution to \eqref{eqn:epsdp} for sufficiently large $\lambda$, which
usually leads to better solutions to the original combinatorial optimization problem.

\revise{
Finally, we briefly explain problems \eqref{prob:rank-constr} and \eqref{prob:hR-fact}. The explicit low-rank constraint in \eqref{prob:rank-constr} allows for a low-rank optimal solution  when a low-rank solution is not admitted in original SDP and our method can be used directly.  
 The constraint $X = R^TR$ in \eqref{prob:hR-fact} is often treated implicitly since $h(R)$ is imposed on $R$ rather than $R^TR$. In fact,  \eqref{prob:hR-fact} is equivalent to 
\begin{equation}
    \min_{R}  \quad f(R^TR) + h(V),\quad \text{s.t.}\quad \mcA(R^TR) = b, R = V, R\in\mcM.
\end{equation}
Consequently, our ALM framework can be applied similar to \eqref{sub:xk} by eliminating the variable $V$. }


\section{Theoretical analysis}\label{sec:thm}
In this section, we present theoretical analysis on 
properties of the nonconvex subproblems of $R$ and convergence analysis of the outer iterations of ALM.


\subsection{Properties of factorized subproblem on Riemannian manifold}\label{sec:3.1}
Although the factorized subproblem \eqref{sub:rk} is non-convex with respect to $R$, the original subproblem \eqref{sub:xk} is convex in $X$. This sheds light in finding global optimum of the non-convex subproblem \eqref{sub:rk}.
 Suppose that \eqref{sub:xk} has an optimal solution $X^*$ which satisfies that $\text{rank}(X^*)\leq p$. Then, the factorized subproblem has the same minimum as the original subproblem \eqref{sub:xk}. Namely, we have
$\min_{R\in \mcM}  \Psi_k(R) = \min_{X\in \mcD} \Phi_k(X)$.
According to the KKT condition, $X^*$ is the minimum of \eqref{sub:xk}
if and only if $ S(X^*)\succeq0$ and $\tr(X^*S(X^*))=0$.  Based on this observation, we characterize the optimality of $R$ for \eqref{sub:rk} in the following proposition. We extend Theorem 7 in \cite{lroot} for SDPs with nonsmooth objective function and general linear equality constraints. 

\begin{proposition} \label{propi:connection}
Suppose that Assumption \ref{assumption1} holds. If $R$ is a rank-deficient local optimum of the factorized subproblem \eqref{sub:rk}, i.e., $\operatorname{rank}(R)<p$, then $R$ is a global optimum of \eqref{sub:rk}.
\end{proposition}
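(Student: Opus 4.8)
The plan is to show that $X:=R^TR$ satisfies the KKT conditions for the convex subproblem \eqref{sub:xk}, namely $S\succeq0$ and $\tr(XS)=0$ with $S=S(R^TR)$ from \eqref{equ:S}, and then to conclude by comparing optimal values. Since $\{R^TR:R\in\mcM\}\subseteq\mcD$, one always has $\min_{R\in\mcM}\Psi_k(R)\ge\min_{X'\in\mcD}\Phi_k(X')$. Once $X=R^TR$ is shown to be a global minimizer of the convex problem, the chain $\Psi_k(R)=\Phi_k(X)=\min_{X'\in\mcD}\Phi_k(X')\le\min_{R'\in\mcM}\Psi_k(R')\le\Psi_k(R)$ forces equality throughout, so $R$ is globally optimal for \eqref{sub:rk}. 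Thus everything reduces to verifying the two KKT relations at $X=R^TR$.

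First I would extract the complementarity relation from the first-order condition. Since $R$ is a local minimum of $\Psi_k$ on $\mcM$, we have $\grad\Psi_k(R)=0$. Using \eqref{equ:r_grad} together with $\nabla\Psi_k(R)=2R\nabla\Phi_k(R^TR)$, this becomes $2R\bigl(\nabla\Phi_k(R^TR)-\mcB^*(u)\bigr)=2RS=0$. Consequently $XS=R^T(RS)=0$, which gives $\tr(XS)=0$ immediately.

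The crux is to deduce $S\succeq0$ from the second-order condition together with rank deficiency. Because $\operatorname{rank}(R)<p$, the rows of $R$ are linearly dependent, so there is a nonzero $z\in\mbR^p$ with $R^Tz=0$. For an arbitrary $v\in\mbR^n$ set $U:=zv^T\in\mbR^{p\times n}$. Then $U^TR+R^TU=v(z^TR)+(R^Tz)v^T=0$, which shows both that $\mcB(R^TU+U^TR)=0$, i.e.\ $U\in T_R\mcM$, and that the second term of \eqref{equ:hess_prod} vanishes regardless of the selection in the generalized Hessian. Hence, for this particular $U$, $\lra{U,\Hess\Psi_k(R)[U]}=2\tr(USU^T)=2\|z\|_2^2\,v^TSv$. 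The second-order necessary optimality condition at the local minimum $R$ then yields $v^TSv\ge0$ for every $v\in\mbR^n$, and since $z\neq0$ we obtain $S\succeq0$. Combined with $\tr(XS)=0$, this is exactly the KKT system, so $X$ is globally optimal for \eqref{sub:xk} and the conclusion follows as above.

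The main obstacle I anticipate is the second-order step: the perturbation must be chosen to lie in $T_R\mcM$ while simultaneously annihilating the $\hat\p^2\Phi_k$ term in \eqref{equ:hess_prod}. This is precisely what makes the argument insensitive to the nonsmoothness of $\Phi_k$, because the surviving quadratic form $2\tr(USU^T)$ depends only on $S$ and not on the chosen element of the generalized Hessian. The rank-one choice $U=zv^T$ with $R^Tz=0$ accomplishes exactly this; the key technical points are the tangency $U\in T_R\mcM$ and the identity $U^TR+R^TU=0$, after which positive semidefiniteness of $S$ is immediate.
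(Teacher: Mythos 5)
Your proposal is correct and follows essentially the same route as the paper: both use $\grad\Psi_k(R)=2RS=0$ to get $\tr(XS)=0$, and both test the second-order condition with a direction $U$ satisfying $R^TU=0$ so that only the $2\tr(USU^T)$ term of \eqref{equ:hess_prod} survives, forcing $S\succeq 0$ and hence global optimality of $X=R^TR$ for the convex problem \eqref{sub:xk}. The only cosmetic difference is that you take the rank-one choice $U=zv^T$ with $R^Tz=0$ where the paper takes $U=P_\perp V$ for an orthonormal complement $P_\perp$; your explicit closing chain of inequalities transferring optimality from \eqref{sub:xk} back to \eqref{sub:rk} is a welcome clarification of a step the paper leaves implicit.
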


\begin{proof}
If $\text{rank}(R)=p'<p$, let $R=P\hat R$, where $\hat R\in \mbR^{p'\times n}$ is full rank and $P\in \mbR^{p\times p'}$. We can choose $P_\perp\in \revise{ \mbR^{p\times (p-p')}}$ such that
$
P^TP_\perp = 0,\quad P_\perp^TP_\perp=I.
$
For any $V\in \mbR^{(p-p')\times n}$, consider $U=P_\perp V$. Then, $R^TU = \revise{\hat R^T}P^TP_\perp V=0$. Suppose that $\Hess \revise{\Psi_k(R)}$ is positive semidefinite, then, according to Proposition \ref{prop:rgrad},
\[
0\leq \lra{U,\Hess \revise{\Psi_k(R)}[U]} =\revise{2}\tr(USU^T)=\revise{2}\tr(P_\perp VSV^T P_\perp^T) = \revise{2}\tr(VSV^T).
\]
Because the choice of $V$ is arbitrary, $S$ is positive semidefinite. \revise{As $R$ is a local optimum, one can find 
\begin{equation}\nonumber
 0=   \grad  \Psi_k(R)=\nabla  \Psi_k(R)-2\sum_{i=1}^{m_0}u_iRB_i = 2R\nabla \Phi_k(X) - 2R\mathcal{B}^*(u) = 2RS.
\end{equation}
It follows that $\tr(XS) = 0$.} Hence, $X=R^TR$ is the minimizer of \eqref{sub:xk}.
\end{proof}

Let $R^*$ to be the global minima of  \eqref{sub:rk}. Denote $\sigma_i(R)$ as the $i$-th largest singular value of $R$ and let $Q_RQ_R^T$ as the projection matrix on the row space of $R$. Define the distance in $\mbR^{p\times n}$ by
$
\tdist(R_1,R_2)=\min_{ U^TU=I}\|R_1-UR_2\|_F.
$
\revise{ Although the factorized problem is nonconvex, the following proposition
    shows an interesting property of $\Psi_k(R)$ around its global minimum, that
    is, the optimality gap can be bounded in terms of $\|\grad \Psi_k(R)\|_F$
    and $\tdist(R,R^*)$. Moreover, if $R$ is a stationary point satisfying \eqref{equ:distr}, then $R$ is a global minimum of \eqref{sub:rk}. Our result improves Lemma 22 in
\cite{dcffs} by relaxing the condition on $\tdist(R,R^*)$.} 

\begin{proposition}\label{prop:nosaddle}
Suppose that Assumption \ref{assumption1} holds and $R^*$ is of rank $r\leq p$. For $R\in \mcM$ satisfying that
\begin{equation}\label{equ:distr}
\tdist(R,R^*)\leq\beta \sigma_r(R^*)
\end{equation}
with $\beta<1/4$,  we have
\[
\begin{aligned}
 \Psi_k(R)- \Psi_k(R^*)
 \leq &\frac{ (1-\beta)^2(2+\beta)+\beta}{2(1-\beta)^2}\|\grad \revise{\Psi_k(R)}\|_F\tdist(R,R^*).
\end{aligned}
\]
\end{proposition}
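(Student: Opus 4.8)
The plan is to reduce the optimality gap to a single inner product via convexity of $\Phi_k$, split that inner product into a Cauchy--Schwarz piece that produces the leading $\|\grad \Psi_k(R)\|_F\,\tdist(R,R^*)$ term, and then control the leftover quadratic form using the distance hypothesis \eqref{equ:distr}. First I would fix the optimal rotation: let $\tilde R = U^*R^*$ attain $\tdist(R,R^*)=\min_{U^TU=I}\|R-UR^*\|_F$, set $\Delta = R-\tilde R$, and write $X=R^TR$, $X^*=\tilde R^T\tilde R=(R^*)^TR^*$. Since $R,R^*\in\mcM$ we have $\mcB(X)=\mcB(X^*)=b_0$, so $\lra{\mcB^*(u),X-X^*}=\lra{u,\mcB(X)-\mcB(X^*)}=0$. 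Convexity of $\Phi_k$ together with $\nabla\Phi_k(X)=S+\mcB^*(u)$ from \eqref{equ:S} then gives
\begin{equation}\label{eq:plan-conv}
\Psi_k(R)-\Psi_k(R^*)=\Phi_k(X)-\Phi_k(X^*)\le\lra{\nabla\Phi_k(X),X-X^*}=\lra{S,X-X^*}.
\end{equation}

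Next I would convert \eqref{eq:plan-conv} into a gradient identity. Expanding $X-X^*=\tilde R^T\Delta+\Delta^T\tilde R+\Delta^T\Delta$ and using symmetry of $S$ yields $\lra{S,X-X^*}=2\lra{S,\tilde R^T\Delta}+\lra{S,\Delta^T\Delta}$. By the proof of Proposition \ref{propi:connection} we have $\grad\Psi_k(R)=2RS$, so substituting $\tilde R S=RS-\Delta S=\tfrac12\grad\Psi_k(R)-\Delta S$ into the first term collapses it, giving
\begin{equation}\label{eq:plan-id}
\lra{S,X-X^*}=\lra{\grad\Psi_k(R),\Delta}-\lra{S,\Delta^T\Delta}.
\end{equation}
The first term is handled by Cauchy--Schwarz, $\lra{\grad\Psi_k(R),\Delta}\le\|\grad\Psi_k(R)\|_F\,\tdist(R,R^*)$, which alone already produces the coefficient $1$ that the stated constant reduces to as $\beta\to0$.

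The crux is bounding the quadratic remainder $-\lra{S,\Delta^T\Delta}$, for which I would exploit that $S$ is controlled precisely in the directions seen by $R$. From $\grad\Psi_k(R)=2RS$ we have $\|RS\|_F=\tfrac12\|\grad\Psi_k(R)\|_F$, so, writing the thin SVD of $R$ and projecting, the component of $S$ along the top singular directions of $R$ is bounded by $\|RS\|_F/\sigma_r(R)$. The hypothesis \eqref{equ:distr} enters twice: Weyl's inequality gives $\sigma_r(R)\ge\sigma_r(R^*)-\|\Delta\|_F\ge(1-\beta)\sigma_r(R^*)$, while $\tdist(R,R^*)=\|\Delta\|_F\le\beta\sigma_r(R^*)$ lets me trade one factor of $\|\Delta\|_F$ for $\beta\,\sigma_r(R^*)$, converting $\|\Delta\|_F^2$ into $\beta\,\sigma_r(R^*)\,\tdist(R,R^*)$. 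Decomposing $\Delta^T\Delta$ with respect to the row space of $R$ and invoking the Procrustes optimality of $\tilde R$ (which makes $R\tilde R^T$ symmetric and thereby controls the cross terms) I would assemble $|\lra{S,\Delta^T\Delta}|\le\frac{(1-\beta)^2+1}{2(1-\beta)^2}\cdot\frac{\|\grad\Psi_k(R)\|_F}{\sigma_r(R^*)}\,\tdist(R,R^*)^2$; the substitution above turns this into $\frac{\beta[(1-\beta)^2+1]}{2(1-\beta)^2}\|\grad\Psi_k(R)\|_F\,\tdist(R,R^*)$. Adding it to the Cauchy--Schwarz term reproduces exactly the stated factor $\frac{(1-\beta)^2(2+\beta)+\beta}{2(1-\beta)^2}$.

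I expect the delicate point to be the contribution of the part of $\Delta$ orthogonal to the row space of $R$: there $S$ is \emph{not} controlled by $\|RS\|_F$, because $S$ need not be positive semidefinite. One must therefore show---using the optimality of the rotation $U^*$ and the singular-value gap $\sigma_r(R)\ge(1-\beta)\sigma_r(R^*)$---that this orthogonal contribution is of higher order in $\tdist(R,R^*)$ and can be absorbed into the same constant. Keeping careful track of these perturbation constants is precisely what yields the sharp factor and, by permitting the larger threshold $\beta<1/4$ on $\tdist(R,R^*)$, accounts for the improvement over Lemma 22 in \cite{dcffs}. The ``moreover'' assertion then follows immediately: if $R$ is stationary, $\grad\Psi_k(R)=0$, so the right-hand side vanishes and $\Psi_k(R)\le\Psi_k(R^*)$, forcing global optimality.
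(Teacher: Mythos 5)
Your proposal is correct and follows essentially the same route as the paper's proof: convexity of $\Phi_k$ reduces the gap to $\lra{S,X-X^*}$, the algebraic identity $\lra{S,X-X^*}=\lra{\grad\Psi_k(R),\Delta}-\lra{S,\Delta^T\Delta}$ isolates the Cauchy--Schwarz term, and the quadratic remainder is controlled by splitting the row space of $\Delta$ between those of $R$ and $R^*$, using Weyl's inequality for $\sigma_r(R)\geq(1-\beta)\sigma_r(R^*)$ and the perturbation chain $\|R^*S\|_2\leq(1-\beta)^{-2}\|RS\|_2$, exactly as in the paper. The only cosmetic difference is that the paper never needs the Procrustes symmetry of $R\tilde R^T$; the bound on the ``orthogonal'' contribution comes entirely from the chain $\|R^*S\|_2\leq\|RS\|_2+\|\Delta S\|_2$, and your stated intermediate constants and final coefficient match the paper's.
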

\begin{proof}
Let $U^* = \mathop{\arg\min}_{ U^TU=I}\|R-UR^*\|_F$ and $\Delta = R-U^*R^*$. Note that $\tdist(R,R^*)=\|\Delta\|_F$. We
can obtain
\[
\begin{aligned}
&\revise{\Psi_k(R)-\Psi_k(R^*)=}\Phi_k(X)-\Phi_k(X^*)\\
\stepa{\leq}& \lra{\nabla \Phi_k(X),X-X^*} \stepb{=} \lra{\nabla \Phi_k(X)-\mcB^* (u), X-X^*}\\
\stepc{=}&2\lra{S, R^TR-(U^*R^*)^TR}-\lra{S,R^TR+(U^*R^*)^TU^*R^*-2(U^*R^*)^TR}\\
=&2\lra{S,\Delta^TR}-\lra{S,\Delta^T\Delta} \stepd{\leq}  \lra{\grad \Psi_k(R),\Delta}+\left|\lra{S,\Delta^T\Delta}\right|.
\end{aligned}
\]
\revise{Here the step (a) utilizes the convexity of $\Phi_k$ and the step (b) utilizes that \[\lra{\mcB^*(u),X-X^*}=\lra{u,\mcB(X)-\mcB(X^*)}=0.\] The step (c) applies the calculation of gradient in Proposition \ref{prop:rgrad} and the step (d) utilizes that $\grad\Psi_k(R)=RS$.} We first bound $|\lra{S,\Delta^T\Delta}|$:
\begin{equation}\label{bnd:delta}
    \begin{aligned}
\left|\lra{S,\Delta^T\Delta}\right| 
\leq& \|Q_\Delta Q_\Delta^TS\|_2\tr(\Delta^T\Delta)\\
\leq& \revise{ \pp{\|Q_R Q_R^TS\|_2+\|Q_{R^*} Q_{R^*}^TS\|_2 } \tdist(R,R^*)^2},
\end{aligned}
\end{equation}
where the last inequality is due to that the row space of $\Delta$ can be decomposed into the row space of $R$ and the row space of $R^*$. Notice that
\begin{equation}\label{equ:R_star_S}
\|R^*S\|_2=\|R^* Q_{R^*} Q_{R^*}^T S\|_2\geq \| Q_{R^*} Q_{R^*}^TS\|_2 \sigma_r(R^*).
\end{equation}
Using  $\sigma_r(R)\geq \sigma_r(R^*)-\tdist(R,R^*)\geq (1-\beta) \sigma_r(R^*)$ for step (a) below, we have
\begin{equation}\label{eq:RS}
\|RS\|_2=\|\revise{R} Q_{R} Q_{R}^T S\|_2\geq \|Q_{R} Q_{R}^T  S\|_2 \sigma_r(R)\stepa{\geq}(1-\beta) \|Q_{R} Q_{R}^T  S\|_2\sigma_r(R^*).
\end{equation}
Hence,
\begin{equation}\label{bnd:R_star_S}
\begin{aligned}
\|R^*S\|_2\leq& \|RS\|_2+\|\Delta S\|_2\leq \|RS\|_2+\|Q_\Delta Q_\Delta^T S\|_2 \|\Delta\|_2\\
\leq &\|RS\|_2+\pp{ \|Q_{R} Q_{R}^T  S\|_2 + \|Q_{R^*} Q_{R^*}^T  S\|_2 } \beta \sigma_r(R^*)\\
\overset{\eqref{equ:R_star_S}}{\leq} &\|RS\|_2+\frac{\beta}{1-\beta}\|RS\|_2+\beta \|R^*S\|_2 = \frac{1}{1-\beta} \|RS\|_2+\beta\|R^*S\|_2.
\end{aligned}
\end{equation}
This implies that $\|R^*S\|_2\leq(1-\beta)^{-2}\|RS\|_2$. Consequently, we have
\begin{equation}\label{bnd:qrstar_s}
\|Q_{R^*} Q_{R^*}^T  S\|_2\overset{\eqref{equ:R_star_S}}{\leq} \frac{1}{\sigma_r(R^*)} \|R^*S\|_2
\overset{\eqref{bnd:R_star_S}}{\leq} \frac{1}{\sigma_r(R^*)}\frac{1}{(1-\beta)^2}\|RS\|_2.
\end{equation}
Thus, we obtain 
\begin{equation}\label{bnd:delta_final}
\begin{aligned}
\left|\lra{S,\Delta^T\Delta}\right|\overset{\eqref{bnd:delta},  \eqref{bnd:qrstar_s}}{\leq}&\revise{\pp{1+(1-\beta)^{-2}}\frac{\|RS\|_2}{\sigma_r(R^*)}\tdist(R,R^*)^2}\\
\overset{\eqref{equ:distr}}{\leq} &\revise{\frac{\beta(1+(1-\beta)^2)}{(1-\beta)^{2}}}\|RS\|_2\tdist(R,R^*).\\
\end{aligned}
\end{equation}
Hence, it holds 
\[
\begin{aligned}
&\Phi_k(X)-\Phi_k(X^*) \\
\leq & \|\grad  \Psi_k(R)\|_F\tdist(R,R^*)+\revise{\frac{\beta(1+(1-\beta)^2)}{2(1-\beta)^{2}}}\|\grad  \Psi_k(R)\|_F\tdist(R,R^*)\\
=&\revise{\frac{ (1-\beta)^2(2+\beta)+\beta}{2(1-\beta)^2}}\|\grad  \Psi_k(R)\|_F\tdist(R,R^*),
\end{aligned}
\]
\revise{where the inequality utilizes \eqref{bnd:delta_final} and the fact that $\grad\Psi_k(R)=2RS$.}
\end{proof}

According to Proposition \ref{prop:nosaddle} and the nonincreasing of $\Psi_k(R_l)$ with respect to $l$, we can ensure that $\hat R$ is a global minimum of the factorized subproblem \eqref{sub:rk} by requiring $R_0$ to satisfy \eqref{equ:distr} \blue{strictly and the stepsize $s_l$ is chosen appropriately}, \revise{where $\hat{R}$ is the limit point of $\{R_l\}_{l=1}^{\infty}$ with $\lim_{l\rightarrow \infty}\|\grad \Psi_k(R_l) \| = 0 $}. In the following proposition, we show a sufficient condition to bound the optimality gap
$\Psi_k(R)-\inf_{R\in \mcM}\Psi_k(R)$ using the first and second order information of $\Psi_k(R)$ at $R$.
Our results extend Lemma 6 in \cite{tncbm} to SDP with general equality constraints and nonsmooth regularization. 
\begin{proposition}[Optimality gap]\label{prop:gap}
Suppose that Assumption \ref{assumption1} holds and \eqref{sub:xk} has a \revise{global} minimum $X^*$ with rank at most $p$. For $R\in \mcM$ with $\|\grad  \Psi_k(R)\|_F\leq \epsilon_g$ and $S(R^TR)\succeq -\epsilon_HI$, the optimality gap of $ \Psi_k(R)$ at $R$ can be bounded by
\[
 \Psi_k(R)-\min_{R\in \mcM} \Psi_k(R)\leq\sqrt{\operatorname{diam}(\mcD)} \frac{\epsilon_g}{2}+ \operatorname{diam}(\mcD) \epsilon_H,
\]
where $R^*$ is a minimizer of \revise{ \eqref{sub:rk} }and $\operatorname{diam}(\mcD)=\max_{X\in \mcD} \tr(X)$. Moreover, if $I_d\in \operatorname{span}(\{B_i\}_{i=1}^{m_0})$, then the above bound can be refined to
$ \Psi_k(R)-\min_{R\in \mcM} \Psi_k(R)\leq  \operatorname{diam}(\mcD) \epsilon_H.  $
\end{proposition}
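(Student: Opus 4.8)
The plan is to transfer the gap into the $X$-space and lean on the convexity of $\Phi_k$. By the rank-reduction discussion preceding Proposition \ref{propi:connection}, the hypothesis $\operatorname{rank}(X^*)\le p$ gives $\min_{R\in\mcM}\Psi_k(R)=\min_{X\in\mcD}\Phi_k(X)=\Phi_k(X^*)$, so with $X=R^TR$ the quantity to bound is $\Phi_k(X)-\Phi_k(X^*)$. Convexity of $\Phi_k$ yields
\[
\Phi_k(X)-\Phi_k(X^*)\le \lra{\nabla\Phi_k(X),X-X^*}.
\]
Writing $\nabla\Phi_k(X)=S+\mcB^*(u)$ with $S=S(X)$ and $u=u(X)$ as in \eqref{equ:S}, the constraint term cancels: since $X,X^*\in\mcD$ share the value $\mcB(X)=\mcB(X^*)=b_0$, we get $\lra{\mcB^*(u),X-X^*}=\lra{u,\mcB(X)-\mcB(X^*)}=0$. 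Hence the gap is at most $\lra{S,X}-\lra{S,X^*}$, and it remains to estimate the two terms.

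For the first term I would use the identity $\grad\Psi_k(R)=2RS$ (established in the proof of Proposition \ref{propi:connection}), which gives $\|RS\|_F\le \epsilon_g/2$. Rewriting $\lra{S,X}=\lra{S,R^TR}=\lra{RS,R}$ and applying Cauchy--Schwarz with $\|R\|_F^2=\tr(R^TR)=\tr(X)\le\operatorname{diam}(\mcD)$ gives
\[
\lra{S,X}=\lra{RS,R}\le \|RS\|_F\,\|R\|_F\le \frac{\epsilon_g}{2}\sqrt{\operatorname{diam}(\mcD)}.
\]
For the second term, the hypotheses $S\succeq-\epsilon_H I$ and $X^*\succeq 0$ give $\lra{S+\epsilon_H I,X^*}\ge 0$ because the trace inner product of two positive semidefinite matrices is nonnegative; thus $-\lra{S,X^*}\le \epsilon_H\tr(X^*)\le\epsilon_H\operatorname{diam}(\mcD)$. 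Adding the two estimates delivers the first claimed bound.

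For the refinement, suppose $I_d\in\operatorname{span}(\{B_i\}_{i=1}^{m_0})$ and write $I_d=\sum_{i=1}^{m_0}c_iB_i$. The defining relation \eqref{equ:u} for $u$ states exactly that $\mcB(XS)=0$, i.e. $\tr(B_iXS)=0$ for every $i$. Consequently
\[
\lra{S,X}=\tr(XS)=\tr(I_d\cdot XS)=\sum_{i=1}^{m_0}c_i\,\tr(B_iXS)=0,
\]
so the first term vanishes and only the $\epsilon_H$ contribution survives, yielding $\Psi_k(R)-\min_{R\in\mcM}\Psi_k(R)\le\operatorname{diam}(\mcD)\,\epsilon_H$.

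The computation is short, and the only places needing care are the cancellation of the $\mcB^*(u)$ term, which is precisely where the shared feasibility $\mcB(X)=\mcB(X^*)$ enters, and the coupling $\lra{S,X}=\lra{RS,R}$ that converts the Riemannian gradient bound into a bound on $\lra{S,X}$, with the $\sqrt{\operatorname{diam}(\mcD)}$ factor originating from $\|R\|_F$. I do not anticipate a genuine obstacle beyond confirming that $\mcB(XS)=0$ is the literal content of \eqref{equ:u}, which is what powers the sharper bound.
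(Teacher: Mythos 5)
Your proposal is correct and follows essentially the same route as the paper: convexity of $\Phi_k$, cancellation of the $\mcB^*(u)$ term via shared feasibility, the identity $\grad\Psi_k(R)=2RS$ with Cauchy--Schwarz for the $\epsilon_g$ term, and $S\succeq-\epsilon_H I$ paired with $X^*\succeq 0$ for the $\epsilon_H$ term. Your refinement via $\tr(B_iXS)=0$ from \eqref{equ:u} is the same computation the paper performs through $I_n=\mcB^*(\nu)$ and $\lra{\grad\Psi_k(R),R}=\lra{\mcB(R^T\grad\Psi_k(R)),\nu}=0$.
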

\begin{proof}
We first show  for $X,X'\in \mcD$:
\begin{equation}\label{phik:cvx}
 \Phi_k(X)- \Phi_k(X')\leq-\lra{S(X),X-X'}.
\end{equation}
Because $ \revise{\Phi_k(X)}$ is a convex function in $S^n$, we have
\[
 \Phi_k(X)- \Phi_k(X')\leq-\lra{\nabla \Phi_k(X),X-X'}.
\]
On the other hand, for $X,X'\in \mcD$, $\mcB X=\mcB X'=b_0$. 
Hence, we obtain
\[
\begin{aligned}
\lra{S(X), X-X'}
=&\lra{\nabla \Phi_k(X), X-X'}-u^T\mcB(X-X')
=\lra{\nabla \Phi_k(X), X-X'},
\end{aligned}
\]
which proves \eqref{phik:cvx}. Denote $X^*=(R^*)^TR^*$ and $X=R^TR$. Using
\eqref{phik:cvx} gives
\[
\begin{aligned}
 &\Psi_k(R)-\min_{R\in \mcM} \Psi_k(R)
\leq \revise{ -\lra{S(X),X}+\lra{S(X),X^*}}\\
&= \revise{ - \frac{1}{2} \lra{\grad  \Psi_k(R),R}- \lra{S(R^TR),(R^*)^TR^*}}
\leq \sqrt{\operatorname{diam}(\mcD) } \frac{ \epsilon_g}{2} + \operatorname{diam}(\mcD)  \epsilon_H.
\end{aligned}
\]
The last equality comes from $\|R\|_F^2=\tr(R^TR)=\tr(X)\leq \operatorname{diam}(\mcD) $ for $R\in \mcM$. 
If $I_d\in \operatorname{span}(\{B_i\}_{i=1}^{m_0})$, then there exists $\nu$ such that $I_n = \mcB^*(\nu)$, and
\[
\lra{\grad  \Psi_k(R),R} = \lra{ \revise{ R^T \grad  \Psi_k(R)},I_n} = \lra{\revise{ \mcB(R^T \grad  \Psi_k(R))},\nu} = 0.
\]
Hence, we have
$ \Psi_k(R)-\min_{R\in \mcM} \Psi_k(R)\leq \operatorname{diam}(\mcD)  \epsilon_H.
$
This completes the proof.
\end{proof}

\blue{
}
\subsection{Complexity analysis of subproblem}

We present a global convergence analysis of the subproblem as follows. 
We first introduce a lemma. 
\begin{lemma}\label{lem:txu}
For $X\in \mcD$, we have $\|\tilde p\|_F\leq \|U\|_F$ for all $\tilde p \in \p T(X)[U]$.
\end{lemma}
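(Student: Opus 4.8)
The plan is to reduce the inequality to a classical spectral property of the generalized Jacobian of a proximal mapping, namely that every element is self-adjoint with operator norm at most one. First I would strip off the affine reparametrization. Since $T(X)=\sigma_k^{-1}\prox_{\sigma_k h^*}(\sigma_k X-Z^k)$ is the scalar multiple $\sigma_k^{-1}$ of the composition of $P:=\prox_{\sigma_k h^*}$ with the affine map $X\mapsto \sigma_k X-Z^k$ (whose linear part is the invertible operator $\sigma_k\,\mathrm{Id}$), the chain rule for Clarke's generalized Jacobian—which holds with equality because the inner map is affine with invertible linear part—gives $\p T(X)=\sigma_k^{-1}\,\p P(\sigma_k X-Z^k)\,\sigma_k=\p P(\sigma_k X-Z^k)$. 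Hence any $\tilde p\in\p T(X)[U]$ can be written as $\tilde p=MU$ for some $M\in\p P(\sigma_k X-Z^k)$, and it remains only to bound $\|MU\|_F$.

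Next I would establish the bound $0\preceq M\preceq \mathrm{Id}$ (as a self-adjoint operator on $(\mbS^n,\lra{\cdot,\cdot})$) for each such $M$. By Assumption \ref{assump-B}, $h$ is proper, closed and convex, so $h^*$ and hence $\sigma_k h^*$ are proper, closed and convex, and $P=\prox_{\sigma_k h^*}$ is therefore firmly nonexpansive, i.e.\ $\lra{P(Y')-P(Y),Y'-Y}\geq \|P(Y')-P(Y)\|_F^2$. At a point of differentiability $Y\in D(P)$ with Jacobian $J=P'(Y)$, substituting $Y'=Y+tD$ and letting $t\to 0^+$ yields $\lra{JD,D}\geq \|JD\|_F^2$, which gives both $J\succeq 0$ and, via Cauchy--Schwarz, $\|JD\|_F\leq\|D\|_F$; symmetry of $J$ follows from the identity $P=\mathrm{Id}-\nabla e$ for the Moreau envelope $e$ of $\sigma_k h^*$, so $J=\mathrm{Id}-\nabla^2 e(Y)$. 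Thus $0\preceq J\preceq \mathrm{Id}$ on $D(P)$. Because the set of self-adjoint operators with spectrum in $[0,1]$ is closed and convex, this property passes to the limits defining $\p_B P$ and then to the convex hull $\p P$.

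Finally I would combine the two facts: for $M\in\p P(\sigma_k X-Z^k)$ with $0\preceq M\preceq \mathrm{Id}$ we have $\|M\|_{\text{op}}\leq 1$, whence $\|\tilde p\|_F=\|MU\|_F\leq\|M\|_{\text{op}}\|U\|_F\leq\|U\|_F$, which is the claim. Note that the hypothesis $X\in\mcD$ is used only to guarantee that $T$ is evaluated in its domain and plays no further role. The one mildly delicate point is the transfer of the spectral bound from differentiability points to the full Clarke differential through limits and convex hulls; this is the main technical step, though it is routine once the closed-convex characterization of the admissible Jacobians is recorded, and the strong semismoothness of $T$ noted after \eqref{sub:xk} (see \cite{smvf}) ensures all objects involved are well defined.
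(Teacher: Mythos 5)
Your proof is correct, and it follows the same skeleton as the paper's: establish the bound at points of differentiability, pass to limits to cover $\p_B T(X)[U]$, and then use convexity of the norm to cover the convex hull $\p T(X)[U]$. The difference lies in the core estimate. The paper's proof simply invokes the (plain) nonexpansiveness of $T$ --- it is a proximal mapping composed with an affine map, hence $1$-Lipschitz --- to bound the directional difference quotient $\|T(X_k+\tau U)-T(X_k)\|_F/\tau$ by $\|U\|_F$ at differentiability points, which is all the lemma needs. You instead prove the stronger spectral fact that every $M\in\p P(\sigma_k X-Z^k)$ is self-adjoint with $0\preceq M\preceq \mathrm{Id}$, obtained from firm nonexpansiveness together with the Moreau-envelope identity $P=\mathrm{Id}-\nabla e$, and then conclude $\|M\|_{\mathrm{op}}\le 1$. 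Your route costs a bit more machinery (the Clarke chain rule for the affine reparametrization, and the symmetry of $J=\mathrm{Id}-\nabla^2 e(Y)$ at differentiability points, which is classical but worth citing) and in exchange yields a sharper, reusable characterization of $\p\prox$ that is standard in semismooth Newton analyses; the paper's one-line Lipschitz argument is the more economical path to the stated inequality. Your observation that the hypothesis $X\in\mcD$ plays no real role is also accurate.
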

\begin{proof}
We note that $\p T(X)[U]=\operatorname{co}(\p_B T(X)[U])$. For $p\in \p_B
T(X)[U]$, there exists $X_k\in D_T$, $X_k\to X$ such that $p=\lim_{k\to \infty}
T(X_k)[U]$. For $X_k\in \mcD$, we have
\[
\| \revise{\partial T(X_k)}[U] \|_F =  \norm{\lim_{\tau\to 0^+} \frac{T(X_k+\tau U)-T(X_k)}{\tau}}_F\leq\|U\|_F.
\]
As a result, $\|p\|_F = \lim_{k\to \infty}\|\revise{\partial T(X_k)}[U]\|_F\leq \|U\|_F$. Hence, for $\tilde p\in \p  T(X)[U]$, $\|\tilde p\|_F\leq \|U\|_F$.
\end{proof}

We then make the following assumptions. 
\begin{assumption}\label{asmp:a}
We assume that the following statements hold.
\begin{enumerate}[label={\textbf{\Alph*:}},
  ref={\theassumption.\Alph*}]
  \item \label{assump3-A}%
     The Riemannian Hessian and Euclidean gradient are bounded, i.e., there exists $\kappa_H$ and $\kappa_{\Psi_k}$ such that $\|\nabla \Psi_k(R)\|_F \leq \kappa_{\Psi}$ and \revise{ $\| \Hess \Psi_k(R)[U]\|_F \leq \kappa_H \|U\|_F$ holds for any $U\in T_R\mcM$}.
\item \label{assump3-B}%
   There exists $\alpha,\beta > 0$ such that, for all $R\in \mathcal{M}$ and for all $U \in T_{R}\mathcal{M}$,
   \begin{equation}\label{alpha}
     \|\mathcal{R}_R(U) - R\|_F \leq \alpha \|U\|_F, \quad
     \|\mathcal{R}_R(U) - R - U\|_F \leq \beta \|U\|_F^2 .
   \end{equation}
   \item \label{assump3-C}%
   There exist finite constant $M > 0 $ such that, for all $R\in \mathcal{M}$ and $U, V \in \mbR^{n\times p}$, $\|P_{T_R\mathcal{M}}(V) -  P_{T_W\mathcal{M}}(V) \|_F\leq M \|V\|_F\|U\|_F$, where $W = \mathcal{R}_R(U)$.
\end{enumerate}
\end{assumption}
For Assumption \ref{assump3-A}, because $\mcD$ is compact and $\nabla f(X)$ is continuous, $\|\nabla \Phi_k(X)\|_F$ is upper-bounded. This implies that $\|\nabla  \Psi_k(R)\|_F$ is upper-bounded. A sufficent condition of the second part of Assumption \ref{assump3-A} is that there exists some constant $c\geq 0$, for every $\hat H \in \hat \p^2 \Phi_k(X)$, we have
\begin{equation}\label{equ:huc}
\|\hat H[U]\|_F\leq c \|U\|_F.
\end{equation}
The generalized Hessian operator $\hat \p^2 \Phi_k(X)$ satisfies
\[
\hat \p^2 \Phi_k(X)[U] = \nabla^2 f(X)[U]+\sigma_k \mcA^*(\mcA(U))+\p T(X)[U].
\]
On the other hand, it is easy to observe that there exists $c_0\geq 0$ such that
\[
\| \nabla^2 f(X)[U]+\sigma_k \mcA^*(\mcA(U))\|_F\leq c_0\|U\|_F.
\]
Hence, combining with Lemma \ref{lem:txu}, \eqref{equ:huc} holds for $c=c_0+1$.

\revise{Assumption \ref{assump3-B} is standard: a retraction on a compact submanifold always satisfies a second-order boundedness property \cite{grocf}: $\mathcal{R}_R(U) = R + U + \mathcal{O}(\|U\|_F^2)$.}
It is easy to verify that Assumption \ref{assump3-C} is satisfied for specified manifolds in \eqref{specified example}. For example, when $\mcM = \{R\in \mbR^{p\times n}| \|R\|_F=1\}$, we have that
\[
\begin{aligned}
&\|P_{T_R\mathcal{M}}(V) -  P_{T_W\mathcal{M}}(V) \|_F \\
= &\|\tr(R^TV)R-\tr(R^TV)W + \tr(R^TV)W - \tr(W^TV)W\|_F \\
\leq&\|\tr(R^TV)(R-W)\|_F  +  \|\tr((R-W)^TV)W\|_F\\
\leq& 2\|V\|_F\|R-W\|_F \leq 2\alpha\|V\|_F\|U\|_F.
\end{aligned}
\]

We now show that $\nabla \Psi_k(R)$ is Lipschitz continuous with some constant $L_\Psi$. As $\nabla  \Psi(R)=2R\nabla \Phi_k(R^TR)$, it is sufficient to show that $\nabla \Phi_k(X)$ is Lipschitz continuous on $\mcD$. Recall that
\[ \nabla \Phi_k(X) = \nabla f(X)+\sigma_k \mcA^*(\mcA(X)-b-y^k/\sigma_k)+T(X). \]
It is easy to observe that $\nabla f(X)+\sigma_k \mcA^*(\mcA(X)-b-y^k/\sigma_k)$ is Lipschitz continuous. On the other hand,
\[ T(X) = \sigma_k^{-1}\prox_{\sigma_k h^*}(\sigma_k X-Z^k)
=\prox_{(h/\sigma_k)^*}(X-Z^k/\sigma_k).\]
From the non-expansive property of proximal mapping, $T(X)$ is Lipschitz continuous with constant $1$. Hence, $\nabla \Phi(X)$ is Lipschitz continuous on $\mcD$.

\revise{For the rest of this section, we omit the subscript $k$ in $\Psi$ and $\Phi$.}  According to Assumptions \ref{assump3-A} and \ref{assump3-B}, we know that $\Psi$ is a smooth function with retraction $\mcR$, in the sense that, for all $R\in \mathcal{M}$ and all $U \in T_{R}\mathcal{M}$,
\begin{equation}\label{retra-smooth}
 \Psi(\mathcal{R}_R(U)) \leq \Psi(R) + \left<\grad \Psi(R), U \right>  + \frac{\bar{L}_\Psi}{2} \|U\|_F^2,
\end{equation}
where $\bar{L}_\Psi$ is associated with $L_{\Psi},\kappa_{\Psi}, \alpha$ and $\beta$. See \cite[Lemma 2.7]{grocf}.
For simplicity, in what follows we absorb $\revise{s_l}$ into $U_l$ and use $U_l$ to denote $\revise{s_l} U_l$.  We next show that the regularization parameter $\nu_l$ is bounded. 
\begin{lemma}\label{lem:nu_bound}
  Suppose that Assumption \ref{asmp:a} holds. 
  Then, for all $l$, it holds that $\nu_l \leq \nu_{\max}=: \frac{\gamma_2(\bar{L}_\Psi + \kappa_H)}{1-\eta_2 }$.
\end{lemma}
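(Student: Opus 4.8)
The plan is to run the standard argument for adaptive regularization (and trust-region) schemes: I will show that once $\nu_l$ is large enough the step is guaranteed to be very successful ($\rho_l\geq\eta_2$), so that the update rule \eqref{alg:tau-up} can never push $\nu_l$ past a fixed threshold, and the worst-case one-step growth factor $\gamma_2$ then caps the whole sequence. The content is essentially the same as in the ARNT/trust-region literature, specialized to the generalized Hessian $\mfH_l$ and the retraction-smoothness constant $\bar{L}_\Psi$.

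First I would estimate the numerator and denominator of the ratio $\rho_l$ in \eqref{equ:rhol} separately. For the numerator, the retraction-smoothness bound \eqref{retra-smooth} applied at $R_l$ along the accepted step $U_l$ gives
\[
\Psi(\bar R_l)-\Psi(R_l)\leq \lra{\grad\Psi(R_l),U_l}+\frac{\bar{L}_\Psi}{2}\|U_l\|_F^2.
\]
For the denominator I would set $D_l:=m_l(U_l)-\frac{\nu_l}{2}\|U_l\|_F^2=\lra{\grad\Psi(R_l),U_l}+\frac12\lra{\mfH_l[U_l],U_l}$; since $U_l$ is a descent direction, the Armijo rule together with \eqref{condition} guarantees $m_l(U_l)\leq 0$, whence $D_l\leq-\frac{\nu_l}{2}\|U_l\|_F^2<0$ provided $U_l\neq 0$ (if $U_l=0$ the method has already converged). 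Subtracting the two estimates and using the Hessian bound $\lra{\mfH_l[U_l],U_l}\geq-\kappa_H\|U_l\|_F^2$ from Assumption \ref{assump3-A} yields
\[
\bigl(\Psi(\bar R_l)-\Psi(R_l)\bigr)-D_l\leq\frac{\bar{L}_\Psi+\kappa_H}{2}\|U_l\|_F^2.
\]

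Next I would convert these into a bound on $1-\rho_l$. Writing $1-\rho_l=\bigl[(\Psi(\bar R_l)-\Psi(R_l))-D_l\bigr]/(-D_l)$ and inserting $-D_l\geq\frac{\nu_l}{2}\|U_l\|_F^2$ in the denominator together with the previous display in the numerator, I obtain
\[
1-\rho_l\leq\frac{\bar{L}_\Psi+\kappa_H}{\nu_l}.
\]
Consequently, whenever $\nu_l\geq\frac{\bar{L}_\Psi+\kappa_H}{1-\eta_2}$ we have $1-\rho_l\leq 1-\eta_2$, i.e. $\rho_l\geq\eta_2$, so the iteration is very successful and \eqref{alg:tau-up} enforces $\nu_{l+1}\leq\nu_l$.

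Finally I would close with a first-exceedance argument. If $l$ were the smallest index with $\nu_l>\nu_{\max}=\frac{\gamma_2(\bar{L}_\Psi+\kappa_H)}{1-\eta_2}$, then $\nu_l\leq\gamma_2\nu_{l-1}$ forces $\nu_{l-1}>\frac{\bar{L}_\Psi+\kappa_H}{1-\eta_2}$; by the previous step this makes $\rho_{l-1}\geq\eta_2$ and therefore $\nu_l\leq\nu_{l-1}\leq\nu_{\max}$, a contradiction (assuming the initialization $\nu_0\leq\nu_{\max}$). I expect the main obstacle to be the sign bookkeeping in the ratio—verifying $D_l<0$ so that no inequality flips—and ensuring the decisive bound $m_l(U_l)\leq 0$ survives after the line-search step size is absorbed into $U_l$; the latter holds because the Armijo condition bounds $m_l(U_l)$ by a negative multiple of $\lra{\grad\Psi(R_l),U_l}$, which is negative along the descent direction.
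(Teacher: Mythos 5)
Your proposal is correct and follows essentially the same route as the paper: bound $1-\rho_l$ by $(\bar{L}_\Psi+\kappa_H)/\nu_l$ using the retraction-smoothness inequality \eqref{retra-smooth}, the Hessian bound from Assumption \ref{assump3-A}, and the fact $m_l(U_l)\leq 0$ from \eqref{condition}, then conclude that any sufficiently large $\nu_l$ forces a very successful step so the sequence can overshoot $\frac{\bar{L}_\Psi+\kappa_H}{1-\eta_2}$ by at most a factor $\gamma_2$. Your explicit first-exceedance argument is just a more careful write-up of the paper's final sentence.
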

\begin{proof}
Using the definition of $\rho_l$ \eqref{equ:rhol}, the Lipschitz smoothness of $\Psi$  and \eqref{retra-smooth}, it follows that
  \begin{equation}\label{1}
    \begin{split}
       1 - \rho_l  & = 1 - \frac{\Psi(\bar{R}_l) - \Psi(R_l)  }{  m_l( U_l) - \frac{\nu_l}{2}\|U_l\|_F^2}
                 \revise{ \stepa{\leq} } \frac{\Psi(\bar{R}_l) - \Psi(R_l) -  m_l( U_l)  + \frac{\nu_l}{2}\|U_l\|_F^2}{ \frac{\nu_l}{2}\|U_l\|_F^2} \\
                 & \revise{\stepb{\leq}} \frac{\frac{1}{2}(\bar{L}_\Psi + \kappa_H)\|U_l\|_F^2 }{ \frac{\nu_l}{2}\|U_l\|_F^2}  = \frac{(\bar{L}_\Psi + \kappa_H)}{\nu_l},
    \end{split}
  \end{equation}
\revise{ where the step (a) utilizes \eqref{condition} and the step (b) utilizes Assumption \ref{assump-A}.} 
  If $\nu_l \geq \frac{(\bar{L}_\Psi + \kappa_H)}{1-\eta_2 }$, then $\rho_l \geq
  \eta_2$, meaning step $l$ is very successful, and ensures $\nu_{l+1}\leq
  \nu_l$. Thus, we conclude that $\nu_l \leq \frac{\gamma_2(L + \kappa_H)}{1-\eta_2 }$.
\end{proof}

According to Assumption \ref{assump-B}, it is easy to verify that $\Psi$ is bounded from below by a finite constant $\Psi_\mathrm{low}$. The lemma below following from \cite{agarwal2021adaptive} shows that the sum of $U_l$ is bounded above by some constant. 

\begin{lemma}\label{U-bound}
 Suppose that Assumption \ref{assump3-A} holds. Let $\{R_l\}$ and $\{U_l\}$ be the set of iterates and Newton directions generated by Algorithm \ref{alg:ssn_r}.  Then
$ \sum_{k\in \mathcal{S}} \|U_l\|_F^2 \leq \frac{2(\Psi(R_0) - \Psi_{low})}{\eta_1 \nu_{\min}}, $
 where $\mathcal{S}$ is the set of successful iterations.
\end{lemma}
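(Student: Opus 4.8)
The plan is to run a standard sufficient-decrease and telescoping argument, exploiting the fact that every accepted step drives $\Psi$ down by an amount proportional to $\|U_l\|_F^2$. First I would isolate the behavior on a successful iteration $l \in \mathcal{S}$. Since $R_{l+1} = \bar R_l$ in that case, the definition \eqref{equ:rhol} of $\rho_l$ gives $\Psi(R_{l+1}) - \Psi(R_l) = \rho_l\big(m_l(U_l) - \tfrac{\nu_l}{2}\|U_l\|_F^2\big)$. The key observation is that the quantity $D_l := m_l(U_l) - \tfrac{\nu_l}{2}\|U_l\|_F^2$ is strictly negative: by the acceptance condition \eqref{condition} we have $m_l(U_l) \le 0$, hence $D_l \le -\tfrac{\nu_l}{2}\|U_l\|_F^2 < 0$.

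Next I would track signs carefully. Because $D_l < 0$ and a successful step satisfies $\rho_l \ge \eta_1 > 0$, multiplying the negative $D_l$ by the larger factor $\rho_l$ makes it more negative, so $\Psi(R_{l+1}) - \Psi(R_l) = \rho_l D_l \le \eta_1 D_l \le -\tfrac{\eta_1 \nu_l}{2}\|U_l\|_F^2$. Invoking the lower bound $\nu_l \ge \nu_{\min}$, which holds for all $l$ by induction from the update rule \eqref{alg:tau-up} (the floor $\nu_{\min}$ is enforced in the very-successful branch and $\nu_{l+1} \ge \nu_l$ otherwise, provided $\nu_0 \ge \nu_{\min}$), yields the uniform per-step decrease $\Psi(R_l) - \Psi(R_{l+1}) \ge \tfrac{\eta_1 \nu_{\min}}{2}\|U_l\|_F^2$ on successful iterations. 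On unsuccessful iterations $R_{l+1} = R_l$, so $\Psi$ is unchanged and those terms contribute nothing.

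Finally I would telescope. Summing $\Psi(R_l) - \Psi(R_{l+1})$ over $l = 0, \dots, L-1$ collapses to $\Psi(R_0) - \Psi(R_L)$, and only the successful indices contribute on the other side, giving $\tfrac{\eta_1 \nu_{\min}}{2}\sum_{l \in \mathcal{S},\, l < L}\|U_l\|_F^2 \le \Psi(R_0) - \Psi(R_L) \le \Psi(R_0) - \Psi_{low}$, where the last step uses that $\Psi$ is bounded below by $\Psi_{low}$, a consequence of Assumption \ref{assump-B}. Rearranging and letting $L \to \infty$ produces the claimed bound. The main obstacle is not any deep estimate but the careful bookkeeping of signs around the negative denominator $D_l$ together with the clean use of $m_l(U_l) \le 0$ from \eqref{condition}; one must also make explicit that the regularization parameter never drops below $\nu_{\min}$, since that floor is precisely what turns the per-step decrease into a bound independent of the iteration count.
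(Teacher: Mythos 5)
Your proposal is correct and follows essentially the same route as the paper's proof: a per-step sufficient decrease $\Psi(R_l)-\Psi(R_{l+1})\ge \eta_1\bigl(-m_l(U_l)+\tfrac{\nu_l}{2}\|U_l\|_F^2\bigr)\ge \tfrac{\eta_1\nu_{\min}}{2}\|U_l\|_F^2$ on successful iterations (using $m_l(U_l)\le 0$ and $\nu_l\ge\nu_{\min}$), no change on unsuccessful ones, and a telescoping sum bounded by $\Psi(R_0)-\Psi_{\mathrm{low}}$. The only cosmetic difference is that you derive $m_l(U_l)\le 0$ directly from the acceptance condition \eqref{condition}, whereas the paper also cites the descent-direction/Armijo argument to get strict negativity, which is not needed for the bound.
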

\begin{proof}
If iteration $l$ is successful, then $\rho_l \geq \eta_1 $. According to \cite[Lemma 7]{arnt} and the Armijo condition, it follows that $U_l$ is a descent direction and $ m_l(U_l) <0$. We conclude that
$ \Psi(R_l) - \Psi(R_{l+1}) \geq \eta_1 ( - m_l(U_l) + \frac{\nu_l}{2}\|U_l\|_F^2) \geq \frac{\eta_1 \nu_{\min}}{2}\|U_l\|_F^2.
 $
On the other hand, for unsuccessful iterations, $R_{l+1} = R_l$ and the cost does not change.
Using  Assumption \ref{assump3-A}, a telescoping sum yields
  \[
  \Psi(R_0)  - \Psi_{low} \geq \sum_{k=0}^{\infty} (\Psi(R_l) - \Psi(R_{l+1})) 
  \geq \frac{\eta_1 \nu_{\min}}{2}\sum_{k\in\mathcal{S}} \|U_l\|_F^2.
  \]
\end{proof}

We now state our main complexity result for Algorithm \ref{alg:ssn_r}. The proof follows Theorem 3 in \cite{agarwal2021adaptive}, which assumes that the objective function is twice differentiable on $\mathcal{M}$. However, the Riemannian gradient mapping of $\Psi$ is only semismooth. To upper bound $\|\grad \Psi(R_{l+1})\|_F$, we compare the difference between $\grad  \Psi(R_l)$ and $\grad  \Psi(R_{l+1})$ from different tangent space in the Euclidean sense. Combining with Assumption \ref{assump3-C}, we obtain that $\|\grad \Psi(R_{l+1})\|_F$ is bounded above by $\mathcal{O}(\|U\|_F)$. 
\begin{theorem}\label{theorem}
Suppose that Assumption \ref{asmp:a} holds. For an arbitrary $R_0\in\mathcal{M}$, let $R_0,R_1,\cdots$ be the iterates produced by
Algorithm \ref{alg:ssn_r}. Denote $N = \theta + \nu_{\max} + \alpha L_{\Psi} + \alpha \kappa_{\Psi} M  + \kappa_H$. For any $\epsilon$, the total number of successful iterations $l$ such that
$\|\grad \Psi(R_l)\|_F \leq \epsilon$ is bounded above by
$ K_1(\epsilon) = \frac{2(\Psi(R_0) - \Psi_{low})}{\eta_1 \nu_{\min}} N^2
\frac{1}{\epsilon^2}.$
Furthermore, $\lim_{l\rightarrow \infty}\|\grad \Psi(R_l)\|_F=0$.
\end{theorem}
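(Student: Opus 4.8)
The plan is to convert the inexact first-order stationarity of the regularized Newton subproblem into an estimate of the form $\|\grad \Psi(R_{l+1})\|_F \le N\|U_l\|_F$ valid at every successful iteration $l$, and then to combine this with the summability bound of Lemma \ref{U-bound} both to count the iterations whose gradient exceeds $\epsilon$ and to pass to the limit. Here I continue to absorb $s_l$ into $U_l$, so that $R_{l+1}=\mcR_{R_l}(U_l)$ on successful steps.

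First I would bound $\|\grad\Psi(R_l)\|_F$ in terms of $\|U_l\|_F$. Since $U_l\in T_{R_l}\mcM$ and $\mfH_l$ maps the tangent space into itself, the model gradient $\nabla m_l(U_l)=\grad\Psi(R_l)+\mfH_l[U_l]+\nu_l U_l$ stays in $T_{R_l}\mcM$. Rearranging and using the triangle inequality together with the inexactness condition \eqref{condition}, the Hessian bound in Assumption \ref{assump3-A}, and the uniform bound $\nu_l\le\nu_{\max}$ from Lemma \ref{lem:nu_bound} yields $\|\grad\Psi(R_l)\|_F\le(\theta+\kappa_H+\nu_{\max})\|U_l\|_F$.

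The central and most delicate step is to transport this estimate from $R_l$ to $R_{l+1}$, because $\grad\Psi(R_l)$ and $\grad\Psi(R_{l+1})$ live in different tangent spaces and $\Psi$ is only semismooth, so no second-order Taylor expansion of the Riemannian gradient is available. Working instead in the ambient space, I would write $\grad\Psi(R_{l+1})=P_{T_{R_{l+1}}\mcM}\nabla\Psi(R_{l+1})$ and split it as $P_{T_{R_{l+1}}\mcM}\nabla\Psi(R_l)+P_{T_{R_{l+1}}\mcM}(\nabla\Psi(R_{l+1})-\nabla\Psi(R_l))$, further decomposing the first summand by adding and subtracting $P_{T_{R_l}\mcM}\nabla\Psi(R_l)=\grad\Psi(R_l)$. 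The three resulting terms are controlled respectively by the estimate of the previous paragraph, by the projection-difference bound of Assumption \ref{assump3-C} together with the Euclidean gradient bound $\kappa_\Psi$ of Assumption \ref{assump3-A}, and by Lipschitz continuity of $\nabla\Psi$ (constant $L_\Psi$) combined with the retraction bound $\|R_{l+1}-R_l\|_F\le\alpha\|U_l\|_F$ of Assumption \ref{assump3-B}. Collecting these contributions gives $\|\grad\Psi(R_{l+1})\|_F\le N\|U_l\|_F$ with $N$ exactly as defined in the statement; this is the step where Assumption \ref{assump3-C} does the essential work of replacing the missing Hessian regularity.

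With this inequality the two conclusions follow quickly. For the complexity count, any successful iteration with $\|\grad\Psi(R_{l+1})\|_F>\epsilon$ forces $\|U_l\|_F^2>\epsilon^2/N^2$; summing over such iterations and invoking Lemma \ref{U-bound} bounds their number by $\frac{2(\Psi(R_0)-\Psi_{low})}{\eta_1\nu_{\min}}\frac{N^2}{\epsilon^2}=K_1(\epsilon)$, which is precisely the number of successful steps needed before the gradient drops to $\epsilon$. For the limit, Lemma \ref{U-bound} gives $\sum_{l\in\mathcal{S}}\|U_l\|_F^2<\infty$, hence $\|U_l\|_F\to 0$ along successful iterations; Lemma \ref{lem:nu_bound} precludes an infinite run of consecutive unsuccessful steps, since these would multiply $\nu_l$ by at least $\gamma_1>1$ each time and drive $\nu_l\to\infty$, contradicting $\nu_l\le\nu_{\max}$. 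Therefore successful iterations occur infinitely often, and because the iterate is frozen between consecutive successful steps, $\|\grad\Psi(R_l)\|_F\to 0$ for the whole sequence. The main obstacle is the tangent-space comparison in the third paragraph; the remaining arguments are bookkeeping built on the two preceding lemmas.
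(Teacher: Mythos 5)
Your proposal is correct and follows essentially the same route as the paper: the key inequality $\|\grad \Psi(R_{l+1})\|_F \le N\|U_l\|_F$ is obtained from the same three ingredients (the inexactness condition \eqref{condition}, the Lipschitz/retraction bounds of Assumptions \ref{assump3-A}--\ref{assump3-B}, and the projection-difference bound of Assumption \ref{assump3-C}), only with the triangle inequality applied in a slightly different grouping, and the counting step via Lemma \ref{U-bound} is identical. Your explicit argument for $\lim_l \|\grad\Psi(R_l)\|_F = 0$ (infinitely many successful steps, since otherwise $\nu_l \to \infty$ would contradict Lemma \ref{lem:nu_bound}) is exactly the standard argument the paper delegates to the cited reference.
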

\begin{proof}
If iteration $l$ is successful, we have $R_{l+1} = \mathcal{R}_{R_l}(U_l)$. The gradient of the model $m_l$ at $U_l$ is given by
  \begin{equation*}
    \begin{split}
       \nabla m_l(U_l)
        =& \grad \Psi(R_{l+1}) + \nu_l U_l
        + \left[\grad \Psi(R_l) + \Hess \Psi(R_l)[U_l] - \grad \Psi(R_{l+1})\right].
    \end{split}
  \end{equation*}
Due to the condition \eqref{condition} and the triangle inequality, we find
  \begin{equation*}
  \begin{aligned}
      \theta \|U_l\|_F
      \geq &\|\nabla m_l(U_l) \|_F
      \geq  \|\grad \Psi(R_{l+1}) \|_F  - \nu_l \|U_l\|_F  \\
    & -\left\|\grad \Psi(R_l) + \Hess\Psi(R_l)[U_l] -  \grad \Psi(R_{l+1})\right\|_F.
  \end{aligned}
  \end{equation*}
  We also note that
  \begin{equation*}
    \begin{split}
      &  \left\|\grad \Psi(R_l) + \Hess\Psi(R_l)[U_l] -  \grad
      \Psi(R_{l+1})\right\|_F\\
      = & \revise{\left\|P_{T_{R_l}}(\nabla \Psi(R_l)) + \Hess\Psi(R_l)[U_l] - 
     P_{T_{R_{l+1}}}( \nabla\Psi(R_{l+1}))\right\|_F} \\
     \stepa{\leq} & \left\|\nabla \Psi(R_{l}) - \nabla \Psi(R_{l+1})\right\|_F + \left\|(P_{T_{R_l}} - P_{T_{R_{l+1}}})\nabla \Psi(R_{l+1})\right\|_F
     + \left\|\Hess\Psi(R_l)[U_l]\right\|_F\\
     \stepb{\leq} & (\alpha L_{\Psi} + \alpha \kappa_{\Psi} M  + \kappa_H ) \|U_l\|_F,
    \end{split}
  \end{equation*}
 \revise{where the step (a) utilizes that the projection on tangent space is non-expansive and the step (b) uses Assumption \ref{asmp:a}.} This indicates that
  \begin{equation}\label{low bound}
    \|\grad \Psi(R_{l+1})\|_F \leq  (\theta + \nu_{\max} + \alpha L_{\Psi} + \alpha \kappa_{\Psi} M  + \kappa_H)\|U_l\|_F.
  \end{equation}
  \revise{Define} a subset of the successful steps based on the tolerance $\epsilon$:
  \[
  \mathcal{S}_{\epsilon}: = \{l: \rho_l \geq \eta_1~ \mbox{and}~ \|\grad \Psi(R_{l+1})\|_F\geq \epsilon\}.
  \]
  For $l\in\mathcal{S}_{\epsilon}$, we can estimate a lower bound of $\|U_l\|_F^2$ using \eqref{low bound} \revise{and the fact} that $\|\grad \Psi(R_{l+1})\|_F > \epsilon$.  Then, calling upon Lemma \ref{U-bound}, we find
  \begin{equation*}
  \frac{2(\Psi(R_0) - \Psi_\mathrm{low})}{\eta_1 \nu_{\min}} \geq \sum_{l\in\mathcal{S}_{\epsilon}} \|U_l\|_F^2 \geq \frac{\epsilon^2}{\revise{ (\theta + \nu_{\max} + \alpha L_{\Psi} + \alpha \kappa_{\Psi} M  + \kappa_H)^2} }|\mathcal{S}_{\epsilon}|.
  \end{equation*}
  The claim regarding limit point can be proved similar to Theorem 3 in \cite{agarwal2021adaptive}.
\end{proof}

According to Lemma 3 in \cite{agarwal2021adaptive}, we know that the number $K$ of successful iterations among $0,\cdots, \bar{l}$ satisfies
  \begin{equation}\label{k-relation}
  \bar{l} \leq \left(1 + \frac{|\log(\gamma_0)|}{\log(\gamma_1)}   \right) K + \frac{1}{\log(\gamma_1)}\log\left(\frac{\nu_{\max}}{\nu_0}\right).
  \end{equation}
Combining Theorem \ref{theorem} with \eqref{k-relation}, we obtain a bound on the total number of iterations for achieving $\| \grad \Psi(R_l) \|_F \leq \epsilon$ as follows.
\begin{corollary}
  Under the assumptions of Theorem \ref{theorem}, Algorithm \ref{alg:ssn_r} produces a point $R_l\in \mathcal{M}$ such that $\|\grad \Psi(R_l)\|_F \leq \epsilon$ in at most
  \begin{equation*}
  \left(1 + \frac{|\log(\gamma_0)|}{\log(\gamma_1)}   \right)  \frac{2(\Psi(R_0) - \Psi_\mathrm{low})}{\eta_1 \nu_{\min}} N^2 \frac{1}{\epsilon^2} + \frac{1}{\log(\gamma_1)}\log\left(\frac{\nu_{\max}}{\nu_0}\right) + 1.
  \end{equation*}
\end{corollary}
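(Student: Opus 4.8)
The plan is to assemble the corollary from two results already established in the excerpt: the complexity bound of Theorem \ref{theorem} on the number of successful iterations, and the relation \eqref{k-relation} between the count of successful iterations and the total iteration count. Neither requires new estimates, so the argument is a bookkeeping exercise that carefully tracks the first iteration at which the tolerance $\|\grad \Psi(R_l)\|_F \leq \epsilon$ is achieved.

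First I would let $\bar l$ denote the index of the last iterate generated strictly before the algorithm first produces a point $R_l$ with $\|\grad \Psi(R_l)\|_F \leq \epsilon$; this $\bar l$ is finite because Theorem \ref{theorem} guarantees $\lim_{l\to\infty}\|\grad \Psi(R_l)\|_F = 0$. By definition of $\bar l$, every successful iteration $l \leq \bar l$ satisfies $\|\grad \Psi(R_{l+1})\|_F > \epsilon$, so each such iteration lies in the set $\mathcal{S}_\epsilon$ introduced in the proof of Theorem \ref{theorem}. Hence the number $K$ of successful iterations among $0,\dots,\bar l$ obeys $K \leq |\mathcal{S}_\epsilon| \leq K_1(\epsilon) = \frac{2(\Psi(R_0)-\Psi_{\mathrm{low}})}{\eta_1 \nu_{\min}} N^2 \frac{1}{\epsilon^2}$, where $N = \theta + \nu_{\max} + \alpha L_\Psi + \alpha \kappa_\Psi M + \kappa_H$.

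Next I would substitute this bound on $K$ into \eqref{k-relation}, obtaining
\[
\bar l \leq \left(1 + \frac{|\log(\gamma_0)|}{\log(\gamma_1)}\right) K_1(\epsilon) + \frac{1}{\log(\gamma_1)}\log\!\left(\frac{\nu_{\max}}{\nu_0}\right).
\]
Since producing the point that actually meets the tolerance takes one additional iteration beyond $\bar l$, the total number of iterations performed is at most $\bar l + 1$, and this reproduces exactly the claimed expression, with the trailing $+1$ accounting for that final step.

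The only point demanding care—and the step I would treat as the main obstacle, though it is a conceptual rather than a computational one—is reconciling the two indexing conventions: Theorem \ref{theorem} bounds $|\mathcal{S}_\epsilon|$, the count of successful steps whose \emph{successor} gradient still exceeds $\epsilon$, whereas \eqref{k-relation} counts successful steps among $0,\dots,\bar l$ and bounds the raw total $\bar l$ (successful plus unsuccessful). I would verify that the first-hitting time $\bar l$ is well defined and that the inclusion of the successful iterations up to $\bar l$ into $\mathcal{S}_\epsilon$ is exact, so that the substitution $K \leq K_1(\epsilon)$ is legitimate. Everything beyond this is direct algebraic substitution requiring no further inequalities.
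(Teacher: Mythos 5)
Your proposal is correct and follows exactly the route the paper intends: the paper's ``proof'' is precisely the one-line combination of the successful-iteration bound $K_1(\epsilon)$ from Theorem \ref{theorem} with the successful-to-total iteration relation \eqref{k-relation}, plus one final iteration. Your additional care about the first-hitting index $\bar l$ and the inclusion of the successful iterations up to $\bar l$ in $\mathcal{S}_\epsilon$ is a faithful and slightly more explicit rendering of the same bookkeeping.
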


\subsection{Asymptotic convergence analysis}
We now analyze the asymptotic convergence rate. We further assume that the following statement holds.
\begin{assumption}\label{ass:b}
There exists $c_0>0$ such that for all $X\in \mcD$ and $u\in \mbR^{m_0}$, we have
\[
u^T\mcB(X\revise{ \mcB^*(u)})\geq c_0\|u\|_2^2.
\]
\end{assumption}
\revise{Assumption \ref{ass:b} implies that the linear operator $\mathcal{L}(u)=\mathcal{B}(X\mathcal{B}^*(u))$ is strictly positive definite and the smallest eigenvalue is strictly lower bounded from zero.} For specified manifolds in \eqref{specified example}, it is easy to verify that Assumption \ref{ass:b} holds. Based on Assumption \ref{ass:b}, we introduce the following lemma. 
\revise{
\begin{lemma}\label{lem:sx_diff}
Suppose that Assumptions \ref{asmp:a} and \ref{ass:b} hold. Let $X_1,X_2\in \mcD$. Suppose that $u_1,u_2$ satisfy $\mcB(X_i(\nabla \Phi(X_i)-\mcB^*(u_i)))=0$. Then, we have 
\[
\|u_1-u_2\|_2\leq \mcO(\|X_1-X_2\|_F).
\]
\end{lemma}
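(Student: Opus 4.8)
The plan is to recast the defining relation for $u_i$ as a linear system and exploit the uniform positive definiteness supplied by Assumption \ref{ass:b}. For fixed $X\in\mcD$, introduce the linear operator $\mcL_X:\mbR^{m_0}\to\mbR^{m_0}$, $\mcL_X(u)=\mcB(X\mcB^*(u))$, whose matrix entries are $\tr(B_iXB_j)$; since $X$ and the $B_i$ are symmetric, $\tr(B_iXB_j)=\tr(B_jXB_i)$, so $\mcL_X$ is symmetric, and Assumption \ref{ass:b} gives $u^T\mcL_X(u)\geq c_0\|u\|_2^2$, hence $\mcL_X\succeq c_0 I$ and $\|\mcL_X^{-1}\|_{\text{op}}\leq 1/c_0$ uniformly in $X\in\mcD$. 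The defining equation $\mcB(X_i(\nabla\Phi(X_i)-\mcB^*(u_i)))=0$ then reads $\mcL_{X_i}u_i=g_i$ with $g_i:=\mcB(X_i\nabla\Phi(X_i))$, so that $u_i=\mcL_{X_i}^{-1}g_i$ is uniquely determined (which also re-derives the well-posedness asserted after \eqref{equ:u}).

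Next I would split the difference into a right-hand-side perturbation and an operator perturbation,
\[
u_1-u_2=\mcL_{X_1}^{-1}(g_1-g_2)+\bigl(\mcL_{X_1}^{-1}-\mcL_{X_2}^{-1}\bigr)g_2,
\]
and bound each piece. For the first term, $\|\mcL_{X_1}^{-1}(g_1-g_2)\|_2\leq c_0^{-1}\|g_1-g_2\|_2$. To control $\|g_1-g_2\|_2$ I would add and subtract $X_1\nabla\Phi(X_2)$, writing $X_1\nabla\Phi(X_1)-X_2\nabla\Phi(X_2)=X_1(\nabla\Phi(X_1)-\nabla\Phi(X_2))+(X_1-X_2)\nabla\Phi(X_2)$; then compactness of $\mcD$ bounds $\|X_1\|_2$ and $\|\nabla\Phi(X_2)\|_2$, the Lipschitz continuity of $\nabla\Phi$ on $\mcD$ established earlier in this section bounds $\|\nabla\Phi(X_1)-\nabla\Phi(X_2)\|_F$ by $L_\Phi\|X_1-X_2\|_F$, and boundedness of $\mcB$ yields $\|g_1-g_2\|_2\leq C_1\|X_1-X_2\|_F$.

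For the second term I would invoke the resolvent identity $\mcL_{X_1}^{-1}-\mcL_{X_2}^{-1}=\mcL_{X_1}^{-1}(\mcL_{X_2}-\mcL_{X_1})\mcL_{X_2}^{-1}$, so that the term is at most $c_0^{-2}\,\|\mcL_{X_1}-\mcL_{X_2}\|_{\text{op}}\,\|g_2\|_2$, with $\|g_2\|_2$ bounded on the compact set $\mcD$. Since $(\mcL_{X_1}-\mcL_{X_2})(u)=\mcB((X_1-X_2)\mcB^*(u))$, one gets $\|\mcL_{X_1}-\mcL_{X_2}\|_{\text{op}}\leq\|\mcB\|_{\text{op}}^2\|X_1-X_2\|_2\leq\|\mcB\|_{\text{op}}^2\|X_1-X_2\|_F$. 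Combining the two bounds produces $\|u_1-u_2\|_2\leq C\|X_1-X_2\|_F$, which is the claim. The main obstacle is that $u$ depends on $X$ \emph{simultaneously} through the operator $\mcL_X$ and through the right-hand side $g_X$, so a naive argument does not isolate the two effects; the remedy is precisely the resolvent-identity splitting together with the uniform invertibility $\|\mcL_X^{-1}\|_{\text{op}}\leq 1/c_0$ from Assumption \ref{ass:b} and the compactness of $\mcD$, which make every constant independent of $X_1,X_2$.
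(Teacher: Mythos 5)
Your proof is correct, and it reaches the estimate by a slightly different decomposition than the paper's. You write $u_i=\mathcal{L}_{X_i}^{-1}g_i$ with $\mathcal{L}_{X}(u)=\mcB(X\mcB^*(u))$ and $g_i=\mcB(X_i\nabla\Phi(X_i))$, split $u_1-u_2$ into a right-hand-side perturbation plus an operator perturbation, and control the latter with the resolvent identity together with the uniform bound $\|\mathcal{L}_{X}^{-1}\|_{\text{op}}\le 1/c_0$ that Assumption \ref{ass:b} provides. The paper instead introduces an intermediate multiplier $u_{1,2}$ defined by $\mcB(X_1(\nabla\Phi(X_2)-\mcB^*(u_{1,2})))=0$ (operator at $X_1$, gradient at $X_2$) and bounds $\|u_{1,2}-u_1\|_2$ and $\|u_{1,2}-u_2\|_2$ directly from the coercivity inequality $c_0\|v\|_2^2\le v^T\mcB(X_1\mcB^*(v))$, never forming $\mathcal{L}_{X}^{-1}$ explicitly; in the second step the terms rearrange so that the driving quantity is $\mcB\bigl((X_1-X_2)(\nabla\Phi(X_2)-\mcB^*(u_2))\bigr)$, i.e.\ the bounded matrix $S(X_2)$ appears in one piece rather than $g_2$ and $\nabla\Phi(X_2)$ separately. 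The ingredients are identical in both arguments (uniform coercivity from Assumption \ref{ass:b}, Lipschitz continuity of $\nabla\Phi$, compactness of $\mcD$) and the constants are of the same order; your version is the more standard perturbation-of-a-linear-system argument and makes the uniform invertibility explicit, while the paper's avoids inverse operators at the cost of the auxiliary point $u_{1,2}$. One side note: the paper attributes uniqueness of $u(X)$ to Assumption \ref{assump-A}, whereas you rederive it quantitatively from $\mathcal{L}_{X}\succeq c_0 I$; both are valid.
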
}
\begin{proof}
Let $u_{1,2}$ be the unique vector satisfying that $\mcB(X_1(\nabla \Phi(X_2)-\mcB^*(u_{1,2})))=0$. We note that
$ \mcB(X_1(\mcB^*(u_{1,2}-u_1))) = \mcB(X_1(\nabla \Phi(X_2)-\nabla \Phi(X_1)))$.
\revise{Together with the boundedness of $X_1$ implied by Assumption \ref{assump-A}}, we obtain
\[
\begin{aligned}
 c_0 \|u_{1,2}-u_1\|_2^2
&\leq (u_{1,2}-u_1)^T\mcB(X_1(\mcB^*(u_{1,2}-u_1)))\\
&\leq \|u_{1,2}-u_1\|_2 \|\mcB\|_{\text{op}} \|X_1\|_F\|\nabla \Phi(X_2)-\nabla \Phi(X_1)\|_F\\
&=\mcO(\|u_{1,2}-u_1\|_2\|\nabla \Phi(X_2)-\nabla \Phi(X_1)\|_F),
\end{aligned}
\]
which implies $\|u_{1,2}-u_1\|_2=\mcO(\|\nabla \Phi(X_2)-\nabla \Phi(X_1)\|_F)=\mcO(\|X_2-X_1\|_F)$.
On the other hand,
\[
\begin{aligned} \mcB((X_1-X_2)\nabla \Phi(X_2))
&= \mcB(X_1 \mcB^*(u_{1,2}))-\mcB(X_2 \mcB^*(u_{2}))\\
&= \mcB(X_1 \mcB^*(u_{1,2}-u_2))-\mcB((X_2-X_1) \mcB^*(u_{2})).
\end{aligned}
\]
 \revise{Moreover, $S(X)$ is continuous and $\|S(X)\|_F$ is bounded since $u(X)$ is continuous from Assumption \ref{ass:b} and the domain $\mcD$ of $X$ is compact.}
Hence, we have
\[
\begin{aligned}
c_0\|u_{1,2}-u_2\|_2^2
&\leq (u_{1,2}-u_2)^T\mcB(X_1(\mcB^*(u_{1,2}-u_2)))\\
&=\revise{(u_{1,2}-u_2)^T\mcB\Big((X_1-X_2)(\nabla \Phi(X_2)-\mcB^*(u_2))\Big)}\\
&\leq\|u_{1,2}-u_2\|_2 \|\mcB\|_\text{op} \|X_1-X_2\|_F\|S(X_2)\|_F\\
&=\mcO(\|u_{1,2}-u_2\|_2 \|X_1-X_2\|_F),
\end{aligned}
\]
Therefore, $\|u_{1,2}-u_2\|_2=\mcO(\|X_2-X_1\|_F)$.
Consequently, we have
\[
\|u_1-u_2\|_2\leq \|u_{1,2}-u_2\|_2+\|u_{1,2}-u_1\|_2=\mcO(\|X_2-X_1\|_F).
\]
This completes the proof.
\end{proof}
\revise{We note that $S(X_i)=\nabla \Phi(X_i)-\mcB^*(u_i)$. Using the Lipschitz  continuity of $\nabla \Phi$  and  Lemma \ref{lem:sx_diff} gives
$
\|S(X_1)-S(X_2)\|_F\leq \mcO(\|X_1-X_2\|_F).
$}
Therefore, there exists a constant $c_1>0$ such that
\begin{equation}\label{equ:S-c1}
    \|S(X_1)-S(X_2)\|_F\leq c_1 \|X_1-X_2\|_F.
\end{equation} 
We characterize the asymptotic convergence rate as follows. 
\begin{proposition}\label{prop:grad_converge}
Suppose that Assumptions \ref{asmp:a} and \ref{ass:b} hold, $\nabla \Phi$ is semismooth, and $U_l$ satisfies that
\begin{equation}\label{equ:grad-bound}
    \|\grad \Psi(R_l)+\mfH_l(R_l)[U_l]+\nu_lU_l\|_F\leq c_2\|\grad \Psi(R_l)\|_F^{1+\tau},
\end{equation}
where $\mfH_l\in \Hess \Psi(R_l)$, $c_2>0,$ and $0<\tau\leq 1$.
Assume that \revise{\eqref{sub:xk} has a global minimum $X^*=(R^*)^TR^*$ with rank at most $p$} and $R_l$ satisfies
$ \|R_l^TR_l-(R^*)^TR^*\|_F\leq \nu_\text{min}/(4\revise{c_1}). $
Then, we have
\begin{equation}\label{prop3-10-ineq1}
\|\grad \Psi(R_{l+1})\|_F  = \nu_l \|U_l\|_F+\mcO(\|\grad \Psi(R_{l})\|_F^{\revise{1+\tau}}).  
\end{equation}
If we further assume that $\Phi$ is $\sigma$-strongly convex and $\|U_l^TR_l+R_l^TU_l\|_F\geq c_3 \|U_l\|_F$ for some universal constant $c_3>0$, then for $\nu_l<2\sigma c_3^2$, we have the linear convergence rate
\begin{equation}\label{prop3-10-ineq2}
\|\grad \Psi(R_{l+1})\|_F  = \frac{\nu_l}{\nu_l/2+\sigma c_3^2} \|\grad \Psi(R_l)\|_F+\mcO(\|\grad \Psi(R_{l})\|_F^{1+\tau}).
\end{equation}
\end{proposition}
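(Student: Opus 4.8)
The plan is to estimate $\grad\Psi(R_{l+1})$ by comparing it to $\grad\Psi(R_l)+\mfH_l[U_l]$, which the inexact Newton condition \eqref{equ:grad-bound} pins down to $-\nu_lU_l$ up to a residual of order $\mcO(\|\grad\Psi(R_l)\|_F^{1+\tau})$. Recalling from Proposition \ref{prop:rgrad} that $\grad\Psi(R)=2RS(R^TR)$, and writing $\nabla m_l(U_l)=\grad\Psi(R_l)+\mfH_l[U_l]+\nu_lU_l$, I would start from the identity
\[
\grad\Psi(R_{l+1})=-\nu_lU_l+\nabla m_l(U_l)+\underbrace{\big(\grad\Psi(R_{l+1})-\grad\Psi(R_l)-\mfH_l[U_l]\big)}_{=:E_l}.
\]
Since $\|\nabla m_l(U_l)\|_F\le c_2\|\grad\Psi(R_l)\|_F^{1+\tau}$ by \eqref{equ:grad-bound}, the proof reduces to (a) bounding $\|U_l\|_F$ by $\mcO(\|\grad\Psi(R_l)\|_F)$ and (b) showing $\|E_l\|_F=\mcO(\|\grad\Psi(R_l)\|_F^{1+\tau})$; the reverse triangle inequality applied to this identity then yields \eqref{prop3-10-ineq1}.

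For (a) I would establish that $\mfH_l+\nu_lI$ is uniformly positive definite on $T_{R_l}\mcM$. By \eqref{equ:hess_prod}, for $U\in T_{R_l}\mcM$ we have $\lra{U,\mfH_l[U]}=2\tr(US_lU^T)+\lra{\Delta X,\hat\p^2\Phi(X_l)[\Delta X]}$, where $X_l=R_l^TR_l$, $S_l=S(X_l)$, and $\Delta X=U^TR_l+R_l^TU$; convexity of $\Phi$ makes the second term nonnegative. Because $X^*$ minimizes \eqref{sub:xk} we have $S((R^*)^TR^*)\succeq0$, so the Lipschitz estimate \eqref{equ:S-c1} together with the hypothesis $\|X_l-(R^*)^TR^*\|_F\le\nu_{\min}/(4c_1)$ gives $S_l\succeq-(\nu_{\min}/4)I$, hence $2\tr(US_lU^T)\ge-(\nu_{\min}/2)\|U\|_F^2$. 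Using $\nu_l\ge\nu_{\min}$ I then obtain $\lra{U,(\mfH_l+\nu_lI)[U]}\ge(\nu_{\min}/2)\|U\|_F^2$, so $\mfH_l+\nu_lI$ is invertible on the tangent space with inverse of norm at most $2/\nu_{\min}$. Solving \eqref{equ:grad-bound} for $U_l$ yields $\|U_l\|_F\le(2/\nu_{\min})(\|\grad\Psi(R_l)\|_F+c_2\|\grad\Psi(R_l)\|_F^{1+\tau})=\mcO(\|\grad\Psi(R_l)\|_F)$.

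Step (b) is the crux and the part I expect to be hardest, since $\grad\Psi(R_{l+1})$ and $\mfH_l[U_l]$ are assembled from projections onto the \emph{different} tangent spaces $T_{R_{l+1}}\mcM$ and $T_{R_l}\mcM$, and the gradient is only semismooth. Mirroring the decomposition in the proof of Theorem \ref{theorem}, I would write $\grad\Psi(R_j)=P_{T_{R_j}}(\nabla\Psi(R_j))$ and split $E_l$ into three pieces: a Euclidean gradient difference, a projection mismatch $(P_{T_{R_{l+1}}}-P_{T_{R_l}})\nabla\Psi(R_{l+1})$, and the Weingarten/curvature term appearing in \eqref{equ:r_hess}. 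The second-order retraction bound in Assumption \ref{assump3-B} gives $R_{l+1}=R_l+U_l+\mcO(\|U_l\|_F^2)$, hence $X_{l+1}=X_l+\Delta X+\mcO(\|U_l\|_F^2)$; feeding this into the semismoothness of $S$---inherited from the strong semismoothness of $T$ and the Lipschitz dependence of $u$ on $X$ in Lemma \ref{lem:sx_diff}---lets me match the Euclidean gradient difference against the Euclidean part of the generalized Hessian \eqref{equ:r_hess_c} up to $\mcO(\|\Delta X\|_F^2)=\mcO(\|U_l\|_F^2)$. The projection mismatch is $\mcO(\|U_l\|_F^2)$ by Assumption \ref{assump3-C}, and the curvature term is likewise quadratic in $U_l$. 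This gives $\|E_l\|_F=\mcO(\|U_l\|_F^2)$; combined with $\|U_l\|_F=\mcO(\|\grad\Psi(R_l)\|_F)$ and $\mcO(\|\grad\Psi(R_l)\|_F^2)\subseteq\mcO(\|\grad\Psi(R_l)\|_F^{1+\tau})$ near convergence (as $\tau\le1$), this proves (b) and therefore \eqref{prop3-10-ineq1}.

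For the linear rate \eqref{prop3-10-ineq2} I would sharpen the bound on $\|U_l\|_F$ using strong convexity. Taking the inner product of the Newton relation $(\mfH_l+\nu_lI)[U_l]=-\grad\Psi(R_l)+\mcO(\|\grad\Psi(R_l)\|_F^{1+\tau})$ with $U_l$ and applying \eqref{equ:hess_prod}, the bound $\hat\p^2\Phi\succeq\sigma I$, the hypothesis $\|U_l^TR_l+R_l^TU_l\|_F\ge c_3\|U_l\|_F$, and $2\tr(U_lS_lU_l^T)\ge-(\nu_{\min}/2)\|U_l\|_F^2\ge-(\nu_l/2)\|U_l\|_F^2$ from the previous paragraph, I obtain
\[
(\tfrac{\nu_l}{2}+\sigma c_3^2)\|U_l\|_F^2\le\lra{U_l,(\mfH_l+\nu_lI)[U_l]}\le\|U_l\|_F\|\grad\Psi(R_l)\|_F+\mcO(\|U_l\|_F\|\grad\Psi(R_l)\|_F^{1+\tau}).
\]
Dividing by $\|U_l\|_F$ gives $\|U_l\|_F\le(\tfrac{\nu_l}{2}+\sigma c_3^2)^{-1}\|\grad\Psi(R_l)\|_F+\mcO(\|\grad\Psi(R_l)\|_F^{1+\tau})$, and inserting this into \eqref{prop3-10-ineq1} (whose leading term is $\nu_l\|U_l\|_F$) produces the contraction factor $\nu_l/(\tfrac{\nu_l}{2}+\sigma c_3^2)$; the hypothesis $\nu_l<2\sigma c_3^2$ makes this factor strictly less than one. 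The main obstacle throughout remains the semismooth, manifold-aware estimate of $E_l$ in Step (b).
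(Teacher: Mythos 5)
Your overall skeleton is the same as the paper's: expand $\grad\Psi(R_{l+1})$ around $\grad\Psi(R_l)+\mfH_l[U_l]=-\nu_lU_l+\nabla m_l(U_l)$, bound $\|U_l\|_F=\mcO(\|\grad\Psi(R_l)\|_F)$ via the eigenvalue bound $S(R_l^TR_l)\succeq-(\nu_l/4)I$ obtained from \eqref{equ:S-c1} and Weyl's inequality, and sharpen this bound with strong convexity for the linear rate. Steps (a) and the final contraction argument are essentially identical to the paper's. The gap is in step (b), and it is a real one.

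Your claim that $\|E_l\|_F=\mcO(\|U_l\|_F^2)$ is not justified by the argument you give, and in fact the error term is genuinely only first order in $\|U_l\|_F$ as an ambient vector. Two of your three pieces are individually $\mcO(\|U_l\|_F)$, not $\mcO(\|U_l\|_F^2)$: Assumption \ref{assump3-C} bounds the projection mismatch by $M\|\nabla\Psi(R_{l+1})\|_F\|U_l\|_F$ with $\|\nabla\Psi(R_{l+1})\|_F\le\kappa_\Psi=\mcO(1)$, and the Weingarten term $\mathfrak{W}_{R_l}(U_l,\mfP^\perp_{T_{R_l}\mcM}(\nabla\Psi(R_l)))$ is linear in $U_l$ against a normal component $2R_l\mcB^*(u_l)$ that does not vanish at the solution. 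These two first-order contributions cancel only against each other (that is what the Weingarten correction in \eqref{equ:r_hess} is for), and even after that cancellation a first-order residual survives: expanding via the explicit formulas \eqref{equ:r_grad} and \eqref{equ:r_hess_c}, the paper obtains
\[
\grad\Psi(R_{l+1})=\grad\Psi(R_l)+\mfH_l[U_l]+2R_{l+1}\mcB^*(u_l-u_{l+1}+u_l')+\mcO(\|U_l\|_F^2),
\]
and Lemma \ref{lem:sx_diff} together with \eqref{equ:ulp_norm} only gives $\|u_l-u_{l+1}+u_l'\|_2=\mcO(\|U_l\|_F)$ (the linearization of $u(X)$ along $\Delta X=U_l^TR_l+R_l^TU_l$ differs from $u_l'$ by a term driven by $\mcB(\Delta X\,S_l)$, which is first order). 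So a reverse triangle inequality on the ambient norm yields only $\|\grad\Psi(R_{l+1})\|_F=\nu_l\|U_l\|_F+\mcO(\|U_l\|_F)$, which is vacuous for \eqref{prop3-10-ineq1}.

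The missing idea is that this surviving first-order term $2R_{l+1}\mcB^*(u_l-u_{l+1}+u_l')$ lies in the normal space $N_{R_{l+1}}\mcM$, hence is exactly orthogonal to $\grad\Psi(R_{l+1})\in T_{R_{l+1}}\mcM$. The paper therefore does not bound $\|E_l\|_F$; it computes $\|\grad\Psi(R_{l+1})\|_F^2=\lra{\grad\Psi(R_{l+1}),-\nu_lU_l+\nabla m_l(U_l)+\cdots}$ and uses the identity \eqref{equ:grad_product}, $\lra{\grad\Psi(R_{l+1}),2R_{l+1}\mcB^*(v)}=v^T\mcB(\grad\Psi(R_{l+1})^TR_{l+1}+R_{l+1}^T\grad\Psi(R_{l+1}))=0$, to annihilate the offending term before dividing by $\|\grad\Psi(R_{l+1})\|_F$. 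Equivalently, you would need to apply $\mfP_{T_{R_{l+1}}\mcM}$ to your identity before taking norms. Without this orthogonality (or projection) step your proof does not go through; with it, the rest of your argument matches the paper's.
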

\begin{proof}
As $S((R^*)^TR^*)$ is positive semidefinite, its smallest eigenvalue is non-negative. According to the \revise{Weyl's theorem \cite{weyl1912asymptotische} for singular values}, we have
\[
\begin{aligned}
-\sigma_\text{min}(S(R_l^TR_l))
&\leq \sigma_\text{min}( S((R^*)^TR^*))-\sigma_\text{min}( S(R_l^TR_l))\\
&\leq \| S(R_l^TR_l)-S((R^*)^TR^*))\|_F\\
&\overset{\eqref{equ:S-c1}}{\leq} c_1\|R_l^TR_l-(R^*)^TR^*\|_F\leq \revise{\nu_\text{min}/4} \leq \nu_{l}/4.
\end{aligned}
\]
\revise{By utilizing the equality \eqref{equ:hess_prod} and the fact that $\Phi$ is convex, }we obtain that
\[
\begin{aligned}
\frac{\nu_l}{2}\|U_l\|_F^2&\leq \nu_\text{l}\|U_l\|_F^2+2\lra{U_l^TU_l,S(R_l^TR_l)}
\leq \lra{U_l, \mfH_l(R_l)[U_l]+\nu_l U_l}\\
&\overset{\eqref{equ:grad-bound}}{\leq}\lra{U_l,-\grad\revise{\Psi}(R_l)}+c_2\|\grad\Psi(R_l)\|_F^{1+\tau}\|U_l\|_F\\
&=\mcO(\|U_l\|_F\|\grad\Phi(R_l)\|_F).
\end{aligned}
\]
\revise{
Hence, we have
\begin{equation}\label{equ:ul_norm}
    \|U_l\|_F =\mcO(\revise{\|}\grad\Phi(R_l)\|_F).
\end{equation}
Denote $R_{l+1} - R_l-U_l  = W_l$. According to Assumption \ref{assump3-B},
\begin{equation}\label{equ:wl_norm}
    \|W_l \|_F= \| \mcR_{R_l}(U_l)- R_l-U_l \|_F\leq \beta \|U_l\|_F^2.
\end{equation}
}
For simplicity, we write $u_{l+1}=u(R_{l+1}^TR_{l+1})$ and $u_l=u(R_l^TR_l)$. From Proposition \ref{prop:rgrad}, we can write
\begin{equation}\label{equ:h_l}
    \mfH_l(R_l)[U_l] = 2U_l\nabla \Phi(R_l^TR_l) +2R_l\mfV_l[U_l^TR_l+R_l^TU_l]-2R_l\mcB^*(u_l')-2U_l\mcB^*(u_l),
\end{equation}
where $\mfV_l \in \p^2 \Phi (R_{l}^TR_{l})$ and $u_l'$ satisfies
\[
\mcB\pp{R_l^TR_l\pp{\hat\p^2  \revise{\Phi}(R_l^TR_l)[U_l^TR_l+R_l^TU_l]-\mcB^*(u_l')}}=0.
\]
Because $\nabla \Phi$ is semismooth, by writing $R_{l+1}=R_l+U_l+W_l$, we have
\begin{equation}\label{equ:nabla_phi}
\begin{aligned}
&\nabla  \Phi(R_{l+1}^TR_{l+1})-\mcB^*(u_{l+1})\\
=&\revise{\nabla \Phi(R_{l}^TR_{l})-\mcB^*(u_l)+\mfV_l[(U_l+W_l)^TR_l+R_l^T(U_l+W_l)]}\\ &\revise{+\mcB^*(u_l-u_{l+1})+\mcO(\|U_l\|_F^2)}\\
=&\nabla \Phi(R_{l}^TR_{l})-\mcB^*(u_l)+\mfV_l[U_l^TR_l+R_l^TU_l]+\mcB^*(u_l-u_{l+1})+\mcO(\|\grad\Phi(R_l)\|_F^2),
\end{aligned}
\end{equation}
\revise{where the last equality utilizes \eqref{equ:ul_norm} and \eqref{equ:wl_norm}. 
From Lemma \ref{lem:sx_diff} and Assumption \ref{assump3-B}, we can derive that}
\begin{equation}\label{equ:ul_diff}
\revise{
\begin{aligned}
&\|u_l-u_{l+1}\|_2=\mcO(\|R_l^TR_l-R_{l+1}^TR_{l+1}\|_F) \\
\leq&\mcO  (\| R_l  \|_F + \|R_{l+1}\|_F) \|R_{l+1} - R_{l}\|_F= \mcO(\|U_l\|_F).
\end{aligned}}
\end{equation}
\revise{From Assumption \ref{ass:b}, we note that
\begin{equation}\label{equ:ulp_norm}
    \begin{aligned}
        \|u_l'\|_F &\leq c_0^{-1}\norm{\mcB\pp{R_l^TR_l\hat\p^2  \revise{\Phi}(R_l^TR_l)[U_l^TR_l+R_l^TU_l]}}_F\\
    &\leq \mcO\pp{\|U_l^TR_l+R_l^TU_l\|_F}\leq \mcO\pp{\|U_l\|_F}.    
    \end{aligned}
\end{equation}}
Therefore, we can write
\begin{equation}\label{equ:grad_psi}
\begin{aligned}
&\grad \Psi(R_{l+1})\revise{  \overset{\eqref{equ:r_grad}}{=} }2R_{l+1}(\nabla \Phi(R_{l+1}^TR_{l+1})-\mcB^*(u_{l+1}))\\
\revise{\overset{\eqref{equ:wl_norm}}{=}}& 2R_l (\nabla  \Phi(R_{l+1}^TR_{l+1})-\mcB^*(u_{l+1}))+2U_l (\nabla \Phi(R_{l}^TR_{l})-\mcB^*(u_l)) +\mcO(\|U_l\|_F^2) \\
\revise{\overset{\eqref{equ:nabla_phi}}{=}}& 2R_l (\nabla \Phi(R_{l}^TR_{l})-\mcB^*(u_l)+\nabla^2\Phi(R_{l}^TR_{l})[U_l^TR_l+R_l^TU_l])\\
&+2U_l S(R_l^TR_l)+2R_{l}(\mcB^*(u_l-u_{l+1}))+\mcO(\|U_l\|_F^2) \\
\revise{\overset{\eqref{equ:h_l}}{=}}&\grad \Psi(R_l)+\mfH_l[U_l] +2R_{l+1}\mcB^*(u_l-u_{l+1}+u_l')+\mcO(\|U_l\|_F^2). \\
\end{aligned}
\end{equation}
Because $\grad \Psi(R_{l+1})\in T_{R_{l+1}}\mcM$, we have
\[
\mcB(\grad \Psi(R_{l+1})^TR_{l+1}+R_{l+1}^T\grad \Psi(R_{l+1}))=0.
\]
Therefore, \revise{the definition of the inner products yields}
\begin{equation}\label{equ:grad_product}
\begin{aligned}
&\quad\lra{\grad \Psi(R_{l+1}),2R_{l+1}\mcB^*(u_l-u_{l+1}+u_l')} \\
&=\revise{2}\lra{\grad \Psi(R_{l+1})^TR_{l+1},\mcB^*(u_l-u_{l+1}+u_l')}\\
&\stepa{=}\revise{\lra{\grad \Psi(R_{l+1})^TR_{l+1}+R_{l+1}^T\grad \Psi(R_{l+1}),\mcB^*(u_l-u_{l+1}+u_l')}}\\
&=\revise{(u_l-u_{l+1}+u_l')}^T\mcB(\grad \Psi(R_{l+1})^TR_{l+1}+R_{l+1}^T\grad \Psi(R_{l+1}))=0.
\end{aligned}
\end{equation}
\revise{In the step (a), we utilize that $\mcB^*(u_l-u_{l+1}+u_l')$ is symmetric and the identity $2\lra{A,B}=\lra{A+A^T,B}$ for any $A\in \mbR^{n\times n},B\in \mbS^n$.} In summary, we have the following estimation:
\begin{equation}\label{eq:gradestU}
\begin{aligned}
&\|\grad \Psi(R_{l+1})\|_F^2 \\
\revise{\overset{\eqref{equ:grad_psi}}{=}}&\lra{\grad \Psi(R_{l+1}),\grad \Psi(R_l)+\mfH_l[U_l]}\\
&+\lra{\grad \Psi(R_{l+1}),2R_{l+1}\mcB^*(u_l-u_{l+1}+u_l')}+\mcO(\|\grad \Psi(R_{l+1})\|_F\|U_l\|_F^2)\\
\revise{\overset{\eqref{equ:grad-bound}}{\overset{\eqref{equ:grad_product}}{=}}}&\lra{\grad \Psi(R_{l+1}),-\nu_l U_l}+\mcO(\|\grad \Psi(R_{l+1})\|_F\|\grad \Psi(R_{l})\|_F^{1+\tau})\\
\leq & \nu_l \|\grad \Psi(R_{l+1})\|_F\|U_l\|_F+\mcO(\|\grad \Psi(R_{l+1})\|_F\|\grad \Psi(R_{l})\|_F^{1+\tau}),
\end{aligned}
\end{equation}
\revise{which further yields \eqref{prop3-10-ineq1}.}

If $\Phi$ is $\sigma$-strongly convex and $\|U_l^TR_l+R_l^TU_l\|_F\geq c_3 \|U_l\|_F$, then,
\[
\begin{aligned}
&\lra{U_l, \mfH_l(R_l)[U_l]+\nu_l U_l}\\
\overset{\eqref{equ:hess_prod}}{\geq}&\lra{U_l^TR_l+R_l^TU_l,\mfV_l[U_l^TR_l+R_l^TU_l]}+ \revise{\nu_l}\|U_l\|_F^2+2\lra{U_l^TU_l,S(R_l^TR_l)}\\
\geq& \sigma \|U_l^TR_l+R_l^TU_l\|_F^2+\frac{\nu_l}{2}\|U_l\|_F^2
\geq \pp{\sigma c_3^2+\frac{\nu_l}{2}}\|U_l\|_F^2.
\end{aligned}
\]
Hence, we have
\[
\begin{aligned}
\pp{\sigma c_3^2+\frac{\nu_l}{2}}\|U_l\|_F^2
\leq&\lra{U_l, \mfH_l(R_l)[U_l]+\nu_l U_l}\\
\revise{\overset{\eqref{equ:grad-bound}}{\leq}}& \|U_l\|_F\|\grad\Phi(R_l)\|_F+ \mcO(\|U_l\|_F \|\grad\Phi(R_l)\|_F^{1+\tau}).
\end{aligned}
\]
Combining the above inequality with \eqref{eq:gradestU} completes the proof of \eqref{prop3-10-ineq2}.
\end{proof}
\revise{
\begin{remark}
A sufficient condition for the assumption that $\left\|U_{l}^{\top} R_{l}+R_{l}^{\top} U_{l}\right\|_{F} \geq c_{3}\left\|U_{l}\right\|_{F}$ is that the smallest singular value of $R_l$ is lower bounded by $c_3/2$ for all $R_l$ over the trajectory. 
\blue{We note that the parameter $c_1$ is dependent on the Lipschitz continuity of $\nabla \Phi$ and the manifold $\mcM$. 
The constant $c_1$ may grow with the dimension $n$ and the vanish rate of the term $\nu_{\min}/(4c_1)$ is unclear yet
acceptable by our numerical observation.
The main purpose of Proposition \ref{prop:grad_converge} is to reveal the superlinear convergence rate when $R_l$ is sufficiently close to the optimal solution in the sense that $\|R_l^TR_l - (R^*)^TR^*\|_F$ is small. On the other hand, a local linear convergence rate can be established via Proposition \ref{prop:nosaddle}, whose assumptions are milder compared to the ones in \emph{\cite{dcffs}}.
}
\end{remark}}

\subsection{Convergence analysis of ALM}

Now, we consider the factorized version of the original problem \eqref{sdp:general}:
\begin{equation}\label{pro:fac}
\begin{aligned}
\min_{R\in \mcM, W\in \mbS^n} \quad  f(R^TR)+h(W),\quad \text{s.t.} \quad \mcA (R^TR)=b, \quad R^TR=W.
\end{aligned}
\end{equation}
We say $(R,W)$ satisfies the KKT conditions if there exist Lagrange multipliers $y\in \mbR^m,Z\in \mbS^n$ such that
\begin{equation}\label{kktcond1}
  \begin{aligned}
      \mcA (R^TR)&=b, \quad \hspace{1.3cm}
      R^TR=W, \\
      0&\in \partial h(W) + Z, \quad\quad
      0\in 2R(\nabla f(R^TR) - \mcA^*y -Z) + N_R\mcM,
  \end{aligned}
\end{equation}
where $N_R\mcM$ represents the normal cone of $\mcM$ at $R$. Denote the augmented Lagrangian function associated with \eqref{pro:fac} by
\begin{equation*}
\begin{aligned}
L_\sigma(R,W,y,Z) = &f(R^TR)+h(W)-y^T(\mcA(R^TR)-b)-\lra{Z,R^TR-W}\\
&+\frac{\sigma}{2}\pp{\|\mcA(R^TR)-b\|_2^2+\|R^TR-W\|_F^2}.
\end{aligned}
\end{equation*}
We use the following two stopping criteria in solving the subproblem:
\begin{align}
 \label{equ:stop}\Psi_k(R^{k+1})-\inf_{R\in \mcM}\Psi_k(R) \leq \epsilon_k, \\
 \label{equ:stop1} \|\grad \Psi_k(R^{k+1})\|_F \leq \epsilon_k.
\end{align}
\revise{The stopping criterion \eqref{equ:stop} can be achieved using Propositoin \ref{prop:gap} and techniques in \cite[section 4.3]{WangDengLiuWen2021arxiv}, while the stopping criterion \eqref{equ:stop1} can be achieved using Theorem \ref{theorem}.}

\begin{theorem}\label{thm:alm_kkt}
  Suppose that Assumption \ref{assumption1} holds and the sequence $\{R^k,W^k\}$ generated by Algorithm \ref{alg:ssn_r} satisfies the condition \eqref{equ:stop1} and let $R^*$ and $W^*$ be limit points of $\{R^k\}$ and $\{W^k\}$.
Suppose 
$\lim_{k\to \infty}\epsilon_k=0$ and $\sigma_{k+1}=\sigma_k$. Then, $(R^*,W^*)$ satisfies the KKT conditions \eqref{kktcond1} for the optimization problem.
\end{theorem}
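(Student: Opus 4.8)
The plan is to exhibit explicit Lagrange-multiplier candidates read off directly from the algorithm's own updates, verify that three of the four relations in \eqref{kktcond1} hold \emph{exactly} at every iteration while the fourth holds up to the tolerance $\epsilon_k$, and then pass to the limit along a convergent subsequence. Throughout, write $X^{k+1}=(R^{k+1})^TR^{k+1}$ and introduce the \emph{natural} multipliers $\hat y^{k+1}=y^k-\sigma_k(\mcA(X^{k+1})-b)$ and $\hat Z^{k+1}=Z^k-\sigma_k(X^{k+1}-W^{k+1})$; these are precisely the quantities appearing inside $\nabla\Phi_k(X^{k+1})$, and they coincide with the algorithm's $y^{k+1},Z^{k+1}$ when $\alpha_k=1$. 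Since $\mcM$ is compact by Assumption \ref{assump-A}, one may extract a single subsequence along which $R^{k+1}\to R^*$, $W^{k+1}\to W^*$, and hence $X^{k+1}\to X^*=(R^*)^TR^*$.

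First I would record the exact per-iteration identities. The Moreau identity gives $T(X^{k+1})=X^{k+1}-Z^k/\sigma_k-W^{k+1}$, so $\sigma_kT(X^{k+1})=-\hat Z^{k+1}$, and the optimality condition of the proximal map defining $W^{k+1}$ reads $0\in\partial h(W^{k+1})+\hat Z^{k+1}$, which is the third relation in \eqref{kktcond1} holding exactly at every step. Substituting the two multiplier definitions into $\nabla\Phi_k(X^{k+1})=\nabla f(X^{k+1})+\sigma_k\mcA^*(\mcA(X^{k+1})-b-y^k/\sigma_k)+T(X^{k+1})$ collapses it to $\nabla\Phi_k(X^{k+1})=\nabla f(X^{k+1})-\mcA^*\hat y^{k+1}-\hat Z^{k+1}$. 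Combining this with $\grad\Psi_k(R^{k+1})=2R^{k+1}S(X^{k+1})$ and $S(X^{k+1})=\nabla\Phi_k(X^{k+1})-\mcB^*(u^{k+1})$ yields
\[
2R^{k+1}\pp{\nabla f(X^{k+1})-\mcA^*\hat y^{k+1}-\hat Z^{k+1}}=\grad\Psi_k(R^{k+1})+2R^{k+1}\mcB^*(u^{k+1}),
\]
where $2R^{k+1}\mcB^*(u^{k+1})\in N_{R^{k+1}}\mcM$ (the normal space of $\mcM$ is spanned by the $2RB_i$) and $\|\grad\Psi_k(R^{k+1})\|_F\le\epsilon_k$ by \eqref{equ:stop1}. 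This is the fourth KKT relation, valid up to $\epsilon_k$.

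The hard part is the remaining two relations, $\mcA(X^*)=b$ and $X^*=W^*$, i.e. showing that the two residuals vanish and that $\hat y^{k+1},\hat Z^{k+1}$ converge. Because $\Phi_k$ is convex in $X$, the outer loop is an inexact augmented-Lagrangian (equivalently, dual proximal-point) iteration in the convex variables $(X,W)$, and I would exploit this structure: with the constant penalty $\sigma_k\equiv\sigma$ the multiplier increments satisfy $y^{k+1}-y^k=-\alpha_k\sigma(\mcA(X^{k+1})-b)$ and $Z^{k+1}-Z^k=-\alpha_k\sigma(X^{k+1}-W^{k+1})$, so once the dual iterates are shown to be bounded with vanishing increments (the standard consequence of solvability of the dual together with $\epsilon_k\to0$), both residuals tend to zero. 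This gives $\mcA(X^*)=b$ and, since $X^{k+1}-W^{k+1}\to0$, also $W^*=X^*=(R^*)^TR^*$, coupling the two limit points. I expect this to require the most care, as boundedness of the multiplier sequences is the genuine obstacle in any constant-$\sigma$ ALM analysis and is what forces an appeal to the convex/dual structure rather than to the nonconvex factorized iteration itself.

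Finally I would pass to the limit. Along a further subsequence $\hat y^{k+1}\to y^*$ and $\hat Z^{k+1}\to Z^*$, bounded by the previous step. Outer semicontinuity of the convex subdifferential applied to $0\in\partial h(W^{k+1})+\hat Z^{k+1}$ yields $0\in\partial h(W^*)+Z^*$. In the displayed identity, continuity of $\nabla f$ and of $u(\cdot)$ (well-defined and continuous under Assumptions \ref{assump-A} and \ref{ass:b}), together with $\grad\Psi_k(R^{k+1})\to0$, show that the left-hand side converges to $2R^*(\nabla f(X^*)-\mcA^*y^*-Z^*)$, while the term $2R^{k+1}\mcB^*(u^{k+1})\in N_{R^{k+1}}\mcM$ converges into $N_{R^*}\mcM$, because $R\mapsto N_R\mcM$ is outer semicontinuous for the smooth embedded submanifold $\mcM$. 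Hence $0\in 2R^*(\nabla f(X^*)-\mcA^*y^*-Z^*)+N_{R^*}\mcM$, completing \eqref{kktcond1}.
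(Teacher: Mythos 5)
Your per-iteration bookkeeping --- reading off $\hat Z^{k+1}$ from the Moreau identity so that $0\in\partial h(W^{k+1})+\hat Z^{k+1}$ holds exactly, collapsing $\nabla\Phi_k(X^{k+1})$ to $\nabla f(X^{k+1})-\mcA^*\hat y^{k+1}-\hat Z^{k+1}$, and identifying $2R^{k+1}\mcB^*(u^{k+1})$ as the normal-cone term so that stationarity holds up to $\epsilon_k$ --- is essentially the content of the paper's (much terser) verification that Assumption 6.1 of the external ALM framework \cite{palmf} is satisfied. The paper then stops there and invokes Theorem 6.2 of that reference to conclude; it does not attempt a self-contained limit argument.

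The difference matters because your self-contained continuation has a genuine gap precisely where you flag ``the hard part.'' You derive $\mcA(X^*)=b$ and $X^*=W^*$ by viewing the outer loop as an inexact dual proximal-point iteration for the convex problem in $(X,W)$, from which bounded multipliers with vanishing increments would follow. But that interpretation requires the ALM subproblem to be solved to (near-)global optimality in $(X,W)$ at each outer iteration. The hypothesis of the theorem is only \eqref{equ:stop1}, i.e.\ $\|\grad\Psi_k(R^{k+1})\|_F\le\epsilon_k$: approximate first-order stationarity of the \emph{nonconvex factorized} subproblem. Such a point need not be anywhere near the minimizer of the convex $\Phi_k$ --- closing that gap requires second-order information such as $S\succeq-\epsilon_H I$ (Proposition \ref{prop:gap}) or the rank-deficiency/neighborhood conditions of Propositions \ref{propi:connection} and \ref{prop:nosaddle}, none of which are assumed here. (This is exactly why the paper states a separate result, Theorem \ref{thm:alm_conv}, under the stronger stopping rule \eqref{equ:stop}, and addresses global optimality there.) Without near-optimality of the subproblem solves, the dual proximal-point machinery is unavailable, and with $\sigma_k$ held constant there is no other mechanism in your argument forcing $\mcA(X^{k+1})-b\to0$, $X^{k+1}-W^{k+1}\to0$, or boundedness of $\hat y^{k+1},\hat Z^{k+1}$. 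Your final limit-passing step (outer semicontinuity of $\partial h$ and of $R\mapsto N_R\mcM$) is fine, but it is conditional on these unproved facts. To repair the argument you would either import the conclusion from the safeguarded-ALM theory as the paper does, or add boundedness of the multiplier sequence and feasibility of the limit point as explicit hypotheses.
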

\begin{proof}
 We first examine that Assumption 6.1 in \cite{palmf} holds.  According to the criterion \eqref{equ:stop1}, we can find $\xi^k = \grad \Psi(R^{k+1})$ such that $\|\xi^k\|_F\leq \epsilon_k$, which is equivalent to
  $
  \xi^k - \nabla \Psi(R^{k+1})  \in N_{R^{k+1}}\mcM.$
  Then, there exists $W^{k+1} = \prox_{h/\sigma_k}(X-Z^k/{\sigma_k})$ such that
\[
\begin{aligned}
 \xi^k &\in \nabla_R L_{\sigma_k}(R^{k+1},W^{k+1},y^{k},Z^{k}) + N_{R^{k+1}}\mcM, \\
 0 &\in \partial_W L_{\sigma_k}(R^{k+1},W^{k+1},y^{k},Z^{k}).
\end{aligned}
\]
Then, the result follows from Theorem 6.2 in \cite{palmf}.
\end{proof}

Assume that \eqref{sub:xk} has an optimum with rank smaller than $p$ and we have $I_d\in \text{span}(\{B_i\}_{i=1}^{m_0})$. From Proposition 
\ref{prop:gap}, we only need to compute a second-order stationary point of $\revise{\Psi_k(R)}$ such that
$ S((R^{k+1})^TR^{k+1})\succeq -\frac{\epsilon_k}{\operatorname{diam}(\mcD) } I.
$
Under the condition \eqref{equ:stop}, we establish the global convergence of ALM. 

\begin{theorem}\label{thm:alm_conv}
Suppose that Assumption \ref{assumption1} holds and the sequence $\{R^k,W^k\}$ generated by Algorithm \ref{alg:ssn_r} satisfies the condition \eqref{equ:stop}. Let $X^*$ and $W^*$ be limit points of $\{X^k\}$ and $\{W^k\}$. Suppose that $h(X)$ is continuous or $h(X)=\boldsymbol{1}_{\mcX}(X)$ for some closed set $\mcX$. Assume that $\{\epsilon_k\}$ is bounded.  Then, we have $h(W^*)<\infty$. For any $X\in \mcD$ and $W\in \mbS^n$ satisfying $h(W)<\infty$, we also have
\begin{equation}\label{equ:constraint}
\|\mcA (X^*)-b\|_F^2+\|X^*-W^*\|_F^2\leq \|\mcA (X)-b\|_F^2+\|X-W\|_F^2.
\end{equation}
Moreover, if $\lim_{k\to \infty}\epsilon_k=0$ and $\sigma_{k+1}=\sigma_k$, $(X^*,W^*)$ is a global minimizer of \eqref{prob:p2}.
\end{theorem}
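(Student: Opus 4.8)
The plan is to convert the function-value stopping criterion \eqref{equ:stop} into a statement of approximate \emph{global} minimization of the convex augmented Lagrangian, and then carry out the standard convex-ALM limiting argument while tracking the inexactness $\epsilon_k$ and the possible unboundedness of the multipliers. Throughout write $g(X,W):=\|\mcA(X)-b\|_F^2+\|X-W\|_F^2$ for the squared infeasibility. Since $\Phi_k(X)=\inf_{W\in\mbS^n}L_{\sigma_k}(X,W,y^k,Z^k)$ and, by the low-rank optimal solution in Assumption \ref{assump-B} and the remarks preceding \eqref{sub:rk}, the factorized subproblem \eqref{sub:rk} and the convex subproblem \eqref{sub:xk} attain the same minimum, the criterion \eqref{equ:stop} gives, for every $X\in\mcD$ and every $W\in\mbS^n$ with $h(W)<\infty$,
\[
L_{\sigma_k}(X^{k+1},W^{k+1},y^k,Z^k)\le L_{\sigma_k}(X,W,y^k,Z^k)+\epsilon_k,
\]
which I call $(\ast)$. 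Completing the square turns the augmented terms in $(\ast)$ into $\tfrac{\sigma_k}{2}\|\mcA(X^{k+1})-b-y^k/\sigma_k\|_F^2+\tfrac{\sigma_k}{2}\|X^{k+1}-W^{k+1}-Z^k/\sigma_k\|_F^2$, which by the multiplier updates in Algorithm \ref{alg:alm_ssn} equals $\tfrac{1}{2\sigma_k}(\|y^{k+1}\|_2^2+\|Z^{k+1}\|_F^2)$ (exactly when $\alpha_k=1$, and comparably otherwise); this is the telescoping structure that will drive every limit. The finiteness $h(W^*)<\infty$ is then immediate: if $h$ is continuous it is finite-valued on $\mbS^n$, and if $h=\boldsymbol{1}_{\mcX}$ then $W^{k+1}=\prox_{h/\sigma_k}(\cdot)$ is the projection onto the closed set $\mcX$, so $W^{k+1}\in\mcX$ and $W^*\in\mcX$.

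For the infeasibility inequality \eqref{equ:constraint}, let $g^\star=\inf\{g(X,W):X\in\mcD,\,h(W)<\infty\}$ and pick $(\hat X,\hat W)$ attaining it (attainment follows from compactness of $\mcD$ and coercivity of $g(X,\cdot)$, or else work with an infimizing sequence). I would apply $(\ast)$ with $(X,W)=(\hat X,\hat W)$, insert the completed-square identity, sum over the iterations and divide by their number. The bounded contributions ($f$ and $h(W^{k+1})$ are bounded below on the compact data, and $\{\epsilon_k\}$ is bounded) average to zero, and the remaining $O(K)$ terms — the growth of $\tfrac{1}{2\sigma}\|y^K\|_2^2,\tfrac{1}{2\sigma}\|Z^K\|_F^2$ on one side and of the linear multiplier terms $\tfrac1K\sum_k(y^k)^\top(\mcA(\hat X)-b)$ and $\tfrac1K\sum_k\lra{Z^k,\hat X-\hat W}$ on the other — must balance. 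Matching this balance against the first-order optimality (variational) inequality of $(\hat X,\hat W)$ for $\min g$ forces the limiting residuals $(\mcA(X^*)-b,\,X^*-W^*)$ to equal $(\mcA(\hat X)-b,\,\hat X-\hat W)$; a short square-completion then yields $g(X^*,W^*)=g^\star$, which is exactly \eqref{equ:constraint}. Passing $X^{k+1}\to X^*$, $W^{k+1}\to W^*$ along the chosen subsequence is justified by compactness of $\mcD$, boundedness of $\{W^k\}$ (from the $\prox$ definition together with compactness of $\{R^TR:R\in\mcM\}$), and lower semicontinuity of $h$.

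For global optimality under the additional hypotheses $\epsilon_k\to0$ and $\sigma_{k+1}=\sigma_k$, note first that Assumption \ref{assump-B} makes \eqref{prob:p2} feasible, so $g^\star=0$ and the previous step gives $g(X^*,W^*)=0$, i.e. $\mcA(X^*)=b$ and $X^*=W^*$. Letting $(X^{\mathrm{opt}},W^{\mathrm{opt}})$ be a global minimizer of \eqref{prob:p2}, its feasibility makes the comparison augmented terms in $(\ast)$ collapse to $\tfrac{1}{2\sigma}(\|y^k\|_2^2+\|Z^k\|_F^2)$ while the linear cross terms vanish. Taking $k\to\infty$ along the subsequence, using $\epsilon_k\to0$, continuity of $f$, lower semicontinuity of $h$, and discarding the nonnegative augmented terms on the left, gives $f(X^*)+h(W^*)\le f(X^{\mathrm{opt}})+h(W^{\mathrm{opt}})$; together with the feasibility of $(X^*,W^*)$ this shows it is a global minimizer of \eqref{prob:p2}.

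The main obstacle is the infeasibility step: because the comparison point in \eqref{equ:constraint} is arbitrary and possibly infeasible, the multipliers $(y^k,Z^k)$ need not remain bounded, and the linear multiplier terms in $(\ast)$ do not simply vanish in the limit. Overcoming this needs the telescoping/Ces\`aro-averaging device coupled with the variational inequality of the least-infeasible point, rather than a naive limit; simultaneously controlling the inexactness $\epsilon_k$ (only assumed bounded in this part) and a possibly varying $\sigma_k$ inside the averaging is where the genuine care lies.
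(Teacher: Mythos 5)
Your opening reduction is the right one, and it is exactly what the paper's proof rests on implicitly: under Assumption \ref{assump-B} the factorized subproblem and \eqref{sub:xk} share the same optimal value, so \eqref{equ:stop} says that $(X^{k+1},W^{k+1})$ is an $\epsilon_k$-global minimizer of $L_{\sigma_k}(\cdot,\cdot,y^k,Z^k)$ over $\mcD\times\mbS^n$; your treatment of $h(W^*)<\infty$ is also fine. From that point on, however, you part ways with the paper, which simply notes that $\mcD\times\mbS^n$ (resp.\ $\mcD\times\mcX$) is closed and invokes Theorems 5.1 and 5.2 of \cite{palmf}. Those theorems are proved by a dichotomy on the penalty parameter ($\sigma_k$ bounded versus $\sigma_k\to\infty$, dividing the $\epsilon_k$-minimality inequality by $\sigma_k$ in the latter case) combined with boundedness of safeguarded multipliers --- not by the telescoping/Ces\`aro device you propose.

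Your substitute argument has two concrete gaps. First, summing the $\epsilon_k$-minimality inequality and dividing by the iteration count controls only \emph{averaged} quantities: a triangularly weighted average of the residuals $\mcA(X^{k+1})-b$ and $X^{k+1}-W^{k+1}$ on one side against the uniform average hidden in $\|y^K\|_2^2$ on the other. The theorem asserts \eqref{equ:constraint} for an arbitrary limit point of a subsequence, and passing from the averaged inequality to $g(X^*,W^*)\le g(X,W)$ for such a point requires the residuals (or at least the infeasibility measure) to converge along the whole sequence; you neither establish this nor can it be extracted from \eqref{equ:stop} alone, and the phrase ``must balance \dots forces the limiting residuals'' papers over precisely this step. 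Second, in the global-optimality part you drop the nonnegative augmented terms on the left and conclude $f(X^*)+h(W^*)\le f(X^{\mathrm{opt}})+h(W^{\mathrm{opt}})$. After completing the square this amounts to discarding the cross terms $-(y^k)^T(\mcA(X^{k+1})-b)-\langle Z^k,\,X^{k+1}-W^{k+1}\rangle$, equivalently to moving $\tfrac{1}{2\sigma}(\|y^k\|_2^2+\|Z^k\|_F^2)$ to the right-hand side. You correctly flag in part (a) that the multipliers need not be bounded, but then ignore the issue here: even when the residuals tend to zero along the chosen subsequence, an unbounded $y^k$ can keep $(y^k)^T(\mcA(X^{k+1})-b)$ bounded away from zero with the unfavorable sign, so the limit you take is not justified. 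Both gaps are filled in the cited reference by the very ingredients absent from your write-up --- multiplier safeguarding/boundedness and the $\sigma_k$-dichotomy --- so either import those hypotheses explicitly or complete the Ces\`aro argument with a proof that the infeasibility converges along the full sequence.
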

\begin{proof}
We first consider the case where $h(X)$ is continuous. Because $h(X)$ is continuous, $h(W^*)<\infty$. By using the fact that $\mcD$ is a closed set and $\mcD\times \mbS^n$ is closed, the first part of Theorem \ref{thm:alm_conv} follows from the results of Theorem 5.1 in \cite{palmf}. On the other hand, the global convergence of ALM can be established via Theorem 5.2 in \cite{palmf}.  For the case where $h(X)=\boldsymbol{1}_{\mcX}(X)$, we can rewrite \eqref{prob:p2} into
\begin{equation}\label{prob:p2_re}
\begin{aligned}
\min_{X\in \mcD, W\in \mcX}  \quad  f(X), \text{  s.t.}\;  \quad\;\mcA (X)=b, X=W.
\end{aligned}
\end{equation}
This completes the proof.
\end{proof}

We note that $\mcD\times \mcX$ is a closed set. Because $\{W^k\}\in \mcX$ and $\mcX$ is closed, we have $W^*\in \mcX$, i.e., $h(W^*)<\infty$. For any $X\in \mcD$ and $W\in \mcX$, \eqref{equ:constraint} holds according to Theorem 5.1 in \cite{palmf}. Similarly, the global convergence of ALM follows from Theorem 5.2 in \cite{palmf}.

\section{Numerical experiments}\label{sec:num}
In this section, we demonstrate the effectiveness of our proposed algorithm SDPDAL
on a variety of test problems. We implement SDPDAL under MATLAB R2020a. All experiments
are performed on a Linux server with a twelve-core Intel Xeon E5-2680 CPU and 128 GB memory.
The reported time is a wall-clock time in seconds.

We mainly compare SDPDAL with SDPNAL+\footnote{Downloaded from \url{https://blog.nus.edu.sg/mattohkc/softwares/sdpnalplus/} 
} \cite{sdpnal} and its variant
QSDPNAL\footnote{Downloaded from \url{https://blog.nus.edu.sg/mattohkc/softwares/qsdpnal/} 
} \cite{qsdpnal} for most problems. The reasons for not using SDPLR \cite{sdplr} or the algorithms proposed
in \cite{lroot,tncbm,dcffs} include 1) they require special structures of SDPs thus cannot
handle general problems and 2) their performance can
not measure up with SDPDAL during our initial tests. The reason for not comparing
with SSNSDP \cite{assnm} is that it cannot deal with the constraint $X\geq 0$ or
general non-linear objective functions.




When $f$ and $h$ are convex,  the dual problem of SDP \eqref{prob:p2} can be formulated as
\begin{equation}\label{prob:d}
\max_{y\in \mbR^m,u\in \mbR^{m_0},Z\in \mbS^n, S\succeq 0}  y^Tb+u^Tb_0-h^*(-Z)
- f^*(\mcA^*( y)+\mcB^*( u)+Z+S).
\end{equation}
Given the iterate points $(X^k,W^k,y^k,Z^k)$ in Algorithm \ref{alg:alm_ssn} and $u^k := u(X^k), S^k:= S(X^k)$ defined by \eqref{equ:S},  we evaluate the performance of the algorithm via following quantities:
\begin{equation}\label{eqn:crit}
\begin{aligned}
\eta_{p}^k = &\frac{\|\mcA (X^k)-b\|_2}{1+\|b\|_2}, ~~\eta_{Z}^k =  \frac{ \|X^k-W^k\|_F}{1+\|X^k\|_F}, \eta^k_{g} =  \frac{\text{obj}_P - \text{obj}_D}{1+|\text{obj}_P| + |\text{obj}_D|}\\
\eta_{K^*}^k = &\frac{\| \mcP_{S\succeq
0}(-S^k)\|_F}{\|S^k\|_F+1},\eta_{C_1}^k =
\frac{|\lra{X^k,S^k}|}{1+\|X^k\|_F+\|S^k\|_F},
\end{aligned}
\end{equation}
where $\text{obj}_P = f(X^k) + h(X^k)$ and  $\text{obj}_D = (y^k)^Tb+u^Tb_0-h^*(-Z^k) - f^*(\mcA^*( y)+\mcB^*( u)+Z+S)$.
We stop the algorithm if $ \max\{\eta^k_{p},\eta^k_{Z},\eta^k_{K^*},\eta^k_{g},\eta_{C_1}^k\}<\epsilon^\text{tol}, $
where $\epsilon^\text{tol}$ is a given tolerance. Note that $\eta^k_{g}$ is evaluated only when $f$ and $h$ are convex.

Due to page limit, we only report a few summaries of the numerical results. The
detailed tables  can be found in
 \cite{WangDengLiuWen2021arxiv}.
\subsection{Implementation details} 
\revise{The parameters of SDPNAL+ and QSDPNAL are set the same as in \cite{sdpnalp} and \cite{qsdpnal}, respectively. For SDPDAL,  the parameters in Algorithm \ref{alg:alm_ssn} are set:  The ALM step size $\alpha_k$ is chosen from $[1, (1 + \sqrt{5})/2)$,
and $\sigma_k$ is increased by a factor of 1.1 whenever the drop of the infeasibility measure
$\max(\eta_p^k, \eta_Z^k)$ defined in \eqref{eqn:crit}
is not significant. The initial point $R_0$ is randomly selected from the manifold $\mcM$. The parameters in Algorithm \ref{alg:ssn_r} are set as $\eta_1 = 0.01$, $\eta_2 = 0.9$, $\gamma_0 = 0.2$, $\gamma_1 = 1$, $\gamma_2 = 10$, $\nu_{\min} = 10^{-3}$, and $\theta = 0.1$.}

\subsection{Max-cut problems}
\subsubsection{Max-cut SDP with cutting planes}
Given an undirected graph with $n$ nodes, the SDP relaxation of the max-cut problem can be formulated as
\begin{equation}\label{eqn:mc-cut}
    \min  -\frac{1}{4}\lra{C,X}, \;
        \operatorname{s.t.}  \; \diag(X) = e,\; \mathcal{A}(X) \geq -e, \; X\succeq 0,
\end{equation}
where $C$ is the graph Laplacian matrix, and $\mathcal{A}(X) \geq -e$ stands for
a subset of the following cutting planes: $\forall~ 1 \leq i < j < k \leq n,$
\[
\begin{aligned}
    X_{ij} + X_{ik} + X_{jk} &\geq -1,
    X_{ij} - X_{ik} - X_{jk} &\geq -1, \\
    -X_{ij} + X_{ik} - X_{jk} &\geq -1,
    -X_{ij} - X_{ik} + X_{jk} &\geq -1,
\end{aligned}
\]
which are introduced to provide a tighter SDP upper bound. See \cite{Armbruster2012} for more details. \revise{In order to compute $\mathcal{A}(X)$, we only need to collect the related components $X_{ij}$ that appear in $\mathcal{A}$ instead of
forming $X = R^TR$ explicitly.}
In the experiments, we generate the cutting planes in two steps: 1)
 Add an entropy term $\lambda E_{\alpha}(X)$ to \eqref{eqn:mc-cut} and solve the problem without $\mathcal{A}$ to obtain a solution candidate $X^0$.
     2) Choose at most $m$ constraints that are violated the most under $X^0$.
    We choose $m = \lceil \sqrt{n/2} \rceil$ in the following experiments.
A complete \texttt{Gset} 
dataset is tested. 
Since \eqref{eqn:mc-cut} contains inequality constraints only,
we denote
$ \eta_{p} = \frac{\|\mathcal{A}_I(R^TR) - b_I\|_2}{1 + \|b_I\|_2}$, $
    \eta_{C_3} = \frac{|y_I^T(\mathcal{A}_I(R^TR) - b_I)|}{1 +
    \|\mathcal{A}_I(R^TR) - b_I\|_2 + \|y_I\|_2}$.
Other criteria have the same meaning as \eqref{eqn:crit}. 

Table \ref{tab:mc-stat} gives a statistical summary of the comparisons. In the table, ``success'' means
\begin{equation}\label{eqn:eta-max}
\eta_{\max} = \max\{\eta_p, \eta_d, \eta_g, \eta_{K}, \eta_{K^*}, \eta_{C1},
\eta_{C3}\}\le 5\times 10^{-6},
\end{equation}
 where
$ \eta_d = \frac{\|\nabla f(X)-\mcA^*(y)-Z-\mcB^*(u)-S\|_F}{1 + \|\nabla
f(X)\|_F}$ and $\eta_{K} = \frac{\|\mcP_{X\succeq 0}(-X)\|_F}{1 + \|X\|_F}$.
Other quantities follow the definition in \eqref{eqn:crit} with $X = R^TR$.
We remark that $\eta_d$ and $\eta_{K}$ are introduced for a fair comparison
with \mbox{}SDPNAL+. The results returned by SDPDAL always satisfy $\eta_d = \eta_{K} = 0$
due to $X = R^TR \succeq 0$ and the definition of $S$ in \eqref{equ:S}.
The case ``fastest'' means that the CPU time of the algorithm is the least. The case ``fastest under success'' means the fastest algorithm under the success condition. The case ``not slower 10.0 times'' means that the amount of the CPU time of the algorithm is not 10 times slower than the fastest algorithm. The last case corresponds to the ``not slower 10.0 times'' case under the success condition.

\begin{table}[!htb]
\centering
\setlength{\tabcolsep}{5pt}
\caption{A statistic of computational results of SDPDAL and SDPNAL+ for max-cut problems (\texttt{G01--G54})}\label{tab:mc-stat}

\begin{tabular}{|c|cc|cc|}

\hline
& \multicolumn{2}{c|}{SDPDAL}
& \multicolumn{2}{c|}{SDPNAL+} \\
\cline{2-5}
case & number & percentage & number & percentage\\
\hline
success & 54 &  100.0\% & 39 & 72.2\% \\
fastest & 54 &  100.0\% & 0 & 0.0\% \\
fastest under success & 54 &  100.0\% & 0 & 0.0\% \\
not slower 10.0 times & 54 &  100.0\% & 6 & 11.1\% \\
not slower 10.0 times under success & 54 &  100.0\% & 4 & 10.3\% \\
\hline
\end{tabular}
\end{table}

We make the following comments on the results:
1) SDPDAL successfully solves all instances of the \texttt{Gset} dataset.
It takes about 10.5 minutes to solve the largest system \texttt{g81} with $n = 20000$.
We do not report the results of SDPNAL+ for large problems since it fails
to produce a solution to \texttt{g55} ($n=5000$) within $\sim 3$ hours.
 2) SDPNAL+ only solves 72.2\% of the problems successfuly. By inspecting the solver
logs, we find that SDPNAL+ terminates early after detecting no improvements on
the iterations.
 3) According to Table \ref{tab:mc-stat}, SDPDAL is over 10 times faster
than SDPNAL+ in 48 out of 54 instances, mainly because the solutions $X$ are
low-rank ($<0.05 n$) in all cases.

We compare the accuracy and efficiency of SDPDAL with that of SDPNAL+ using the performance profiling method proposed in \cite{dolan2002benchmarking}.
Let $t_{p,s}$ be some performance quantity (e.g. time or accuracy, lower is better) associated with the $s$-th solver on problem $p$.
Then one computes the ratio $r_{p,s}$ between $t_{p,s}$ over the smallest value obtained by $n_s$ solvers on problem $p$, i.e., $r_{p,s} :=\frac{t_{p,s}}{\min\{t_{p,s}: 1\leq s \leq n_s\}}$. For $\tau >0$, the value
$
\pi_{s}(\tau) : = \frac{\text{number of problems where } \log_2(r_{p,s}) \leq \tau}{\text{total number of problems}}
$
indicates that solver $s$ is within a factor $2^\tau \geq 1$ of the performance obtained by the best solver. Then the performance plot is a curve $\pi_s(\tau)$ for each solver $s$ as a function of $\tau$. In Figure \ref{fig:mc-perf}, we show the performance profiles of two criteria: $\eta_{\max}$ defined by \eqref{eqn:eta-max} and CPU time.
In particular, the intercept point of the axis ``ratio of problems'' and the curve in each subfigure is the percentage of the slower/faster one between the two solvers, which is also reflected in the second row of Table \ref{tab:mc-stat}. These figures show that the accuracy and the CPU time of SDPDAL are better than SDPNAL+ on most problems.

\begin{figure}[!htb]
\centering
\subfigure[error $\eta_{\max}$]{
\includegraphics[width=0.45\textwidth]{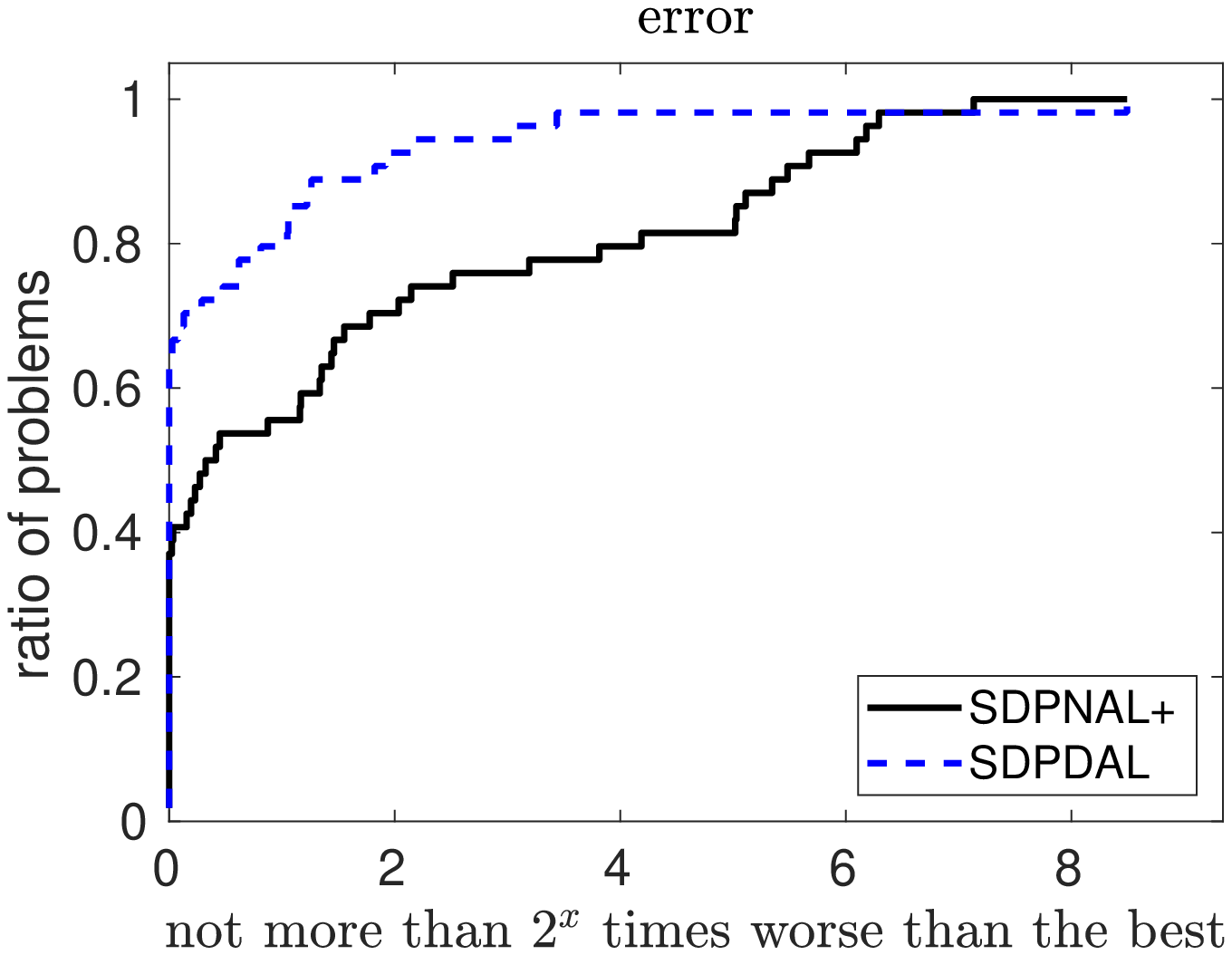}}
\subfigure[CPU]{
\includegraphics[width=0.45\textwidth]{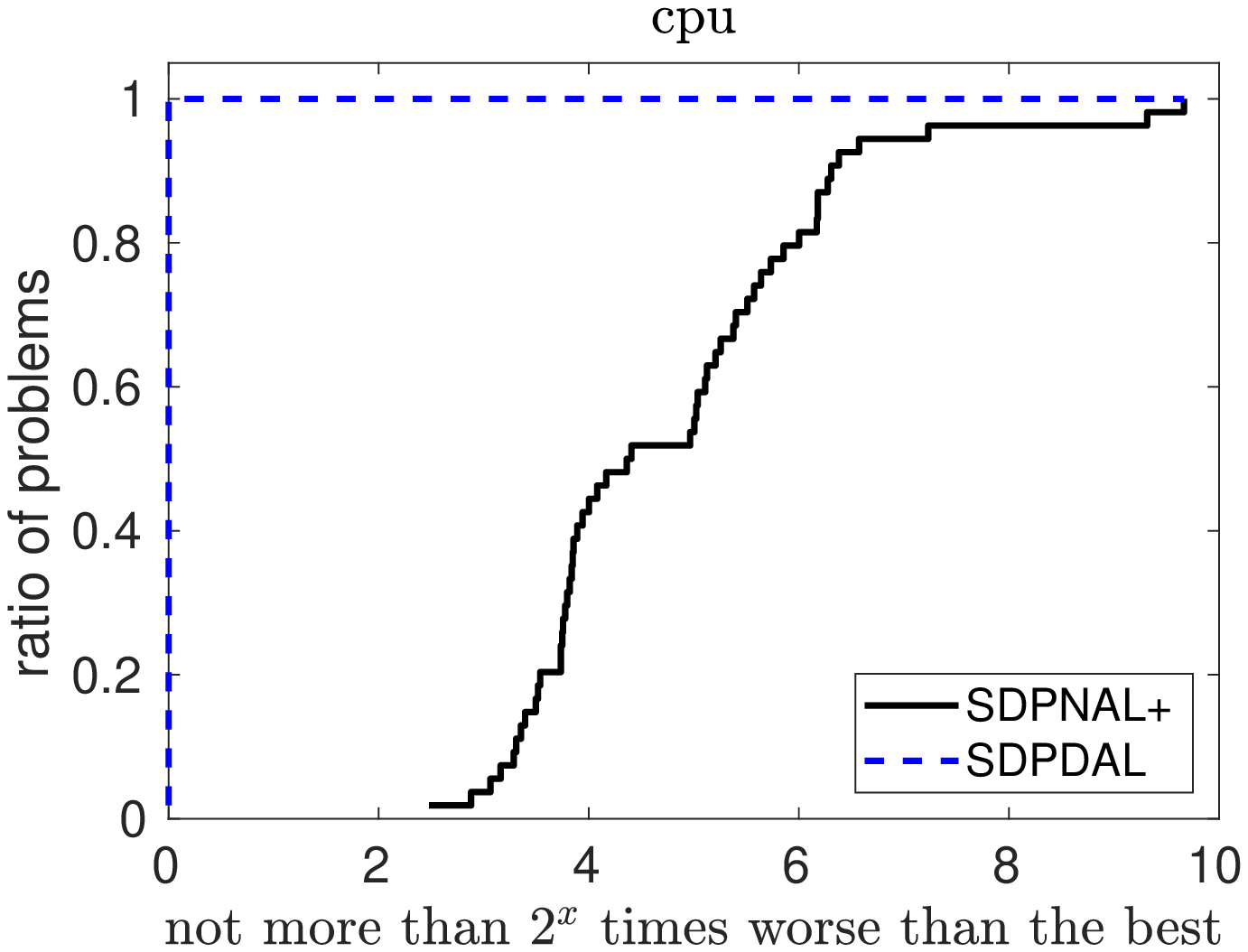}}
\caption{The performance profiles of SDPDAL and SDPNAL+ for max-cut problems \emph{(\texttt{G01--G54})}}\label{fig:mc-perf}
\end{figure}

\subsubsection{Max-cut SDP with entropy-penalization}

%
We compare the performance of Tsallis entropy  ($\alpha = 2$) and R\'enyi entropy
($\alpha = 3$) in \cite{ijcai2019-157} for the formulation \eqref{eqn:epsdp}, where two variants (no cutting planes, with cutting planes) are further tested
for each entropy type. \revise{Due to
the special structure of the entropy functions, computing $X = R^TR$ explicitly is not needed.}
We report the gap between the best known cut value \cite{Gotoeaav2372} (gap\%) defined by
$ \mathrm{gap\%} = 100 \times \frac{\mathrm{best} -
\mathrm{cut}}{\mathrm{best}}$.  A zero ``gap\%'' indicates that the cut value returned by our solver equals to the best known result.
The statistics of the ``gap\%'' of all \texttt{Gset} instances
are demonstrated in Table \ref{tab:mc-ep-stat}, where ``num'' and ``pct''
stand for the number and the percentage of the instances whose ``gap\%''
falls into the corresponding range in the leftmost column.

\begin{table}[!htb]
    \centering
    \caption{Statistics of ``gap\%'' on the \texttt{Gset} dataset.} \label{tab:mc-ep-stat}
    \setlength{\tabcolsep}{1pt}
    \begin{tabular}{|c|cc|cc|cc|cc|cc|}
    \hline
    \multirow{2}{*}{range}
    & \multicolumn{2}{c|}{No entrop}
    & \multicolumn{2}{c|}{Tsallis (no $\mathcal{A}$)}
    & \multicolumn{2}{c|}{Tsallis (with $\mathcal{A}$)}
    & \multicolumn{2}{c|}{R\'enyi (no $\mathcal{A}$)}
    & \multicolumn{2}{c|}{R\'enyi (with $\mathcal{A}$)} \\ \cline{2-11}
    & num & pct & num & pct & num & pct & num & pct & num & pct \\ \hline
        0.00 &    3 &   4.23\% &    3 &   4.23\% &    0 &   0.00\% &    3 &   4.23\% &    2 &   2.82\% \\ \hline
(0.00, 1.00] &    1 &   1.41\% &   19 &  26.76\% &   22 &  30.99\% &   37 &  52.11\% &   39 &  54.93\% \\ \hline
(1.00, 2.00] &    2 &   2.82\% &   25 &  35.21\% &   22 &  30.99\% &   25 &  35.21\% &   22 &  30.99\% \\ \hline
(2.00, 3.00] &   10 &  14.08\% &   13 &  18.31\% &   12 &  16.90\% &    4 &   5.63\% &    8 &  11.27\% \\ \hline
(3.00, 4.00] &   17 &  23.94\% &    6 &   8.45\% &    8 &  11.27\% &    2 &   2.82\% &    0 &   0.00\% \\ \hline
(4.00, 5.00] &    2 &   2.82\% &    3 &   4.23\% &    1 &   1.41\% &    0 &   0.00\% &    0 &   0.00\% \\ \hline
(5.00, 6.00] &    0 &   0.00\% &    2 &   2.82\% &    6 &   8.45\% &    0 &   0.00\% &    0 &   0.00\% \\ \hline
$> 6.00$     &   35 &  50.79\% &    0 &   0.00\% &    0 &   0.00\% &    0 &   0.00\% &    0 &   0.00\% \\ \hline

    \end{tabular}
\end{table}

Below are a few comments on Table \ref{tab:mc-ep-stat}. 
 1) The strong duality may not hold since this SDP problem  is nonconvex. 
 Hence, we ignore $\eta^k_g$ (relative gap) in the stopping rule of SDPDAL.
 2) The optimal gap (gap\%) is smaller than 6\% in all test cases under
 the entropic formulation.
Both Tsallis and R\'enyi entropy are able to
improve the cut value significantly over plain SDP (columns labed by ``No entrop''). 
 3) For R\'enyi entropy, the cut value is slightly better when combined with cutting planes, while no obvious improvement is observed on Tsallis entropy.
We mention that the cutting planes
should be iteratively added to or removed from the SDP problem in order to achieve the best results \cite{biqcrunch}. However, the cutting planes
are fixed in our model  as we only intend to verify
the correctness and speed of SDPDAL. More effective cutting planes
can also be added  for better performance.

\subsection{Relaxation of clustering problems}
The SDP$+$ relaxation of clustering problems (RCP) described in \cite{akcvs} writes
 $$\min\; \lra{-W,X}, \text{ s.t. }Xe=e,\tr(X)=K, X\geq 0, X\succeq 0,$$
where $W$ is the affinity matrix whose entries represent the similarities of the objects in the dataset, $e$ is the vector of ones, and $K$ is the number of clusters. All the datasets we tested are from the UCI Machine Learning Repository, including ``abalone'', ``segment'', ``soybean'' and ``spambase''. For some large
data instances, we only select the first $n$ rows. For example, the original data instance ``spambase'' has 4601 rows, we select the first 1500 rows to obtain the test problem. In our experiment, we set $K = 2,\cdots,11$ respectively. 
\revise{Due to $X\ge 0$, explicitly forming $X = R^TR$ is required.}

The statistics of all examples are shown in Table \ref{tab:stat}.
Apart from the criteria in \eqref{eqn:crit}, we also report
$ \eta_{C_2} = \frac{|\lra{X,Z}|}{1 + \|X\|_F + \|Z\|_F}$
in order to compare with SDPNAL+. Here $Z$ stands for the multiplier associated with $X = W$ in SDPDAL, which is equivalent to the multiplier of $X \geq 0$ in SDPNAL+.
From the table, we can observe that SDPDAL is faster than SDPNAL+ on most examples, for achieving almost the same level of accuracy.   SDPDAL converges fastest on around 82.7\% examples, and it is not 2 times slower than the two other solvers on around 94.5\% examples under the success condition. The corresponding percentage of SDPNAL+ seems to be further smaller than SDPDAL.

\begin{table}[!htb]
\centering
\setlength{\tabcolsep}{5pt}
\caption{A statistic of computational results of SDPDAL and SDPNAL+ on RCP.}\label{tab:stat}

\begin{tabular}{|c|cc|cc|}

\hline
& \multicolumn{2}{c|}{SDPDAL}
& \multicolumn{2}{c|}{SDPNAL+} \\
\cline{2-5}
case & number & percentage & number & percentage\\
\hline
success & 102 &  92.7\% & 71 & 64.5\% \\
fastest & 91 &  82.7\% & 19 & 17.3\% \\
fastest under success & 84 &  82.4\% & 14 & 19.7\% \\
not slower 2.0 times & 104 &  94.5\% & 57 & 51.8\% \\
not slower 2.0 times under success & 97 &  95.1\% & 48 & 67.6\% \\
\hline
\end{tabular}
\end{table}

Figure \ref{fig:perf} shows the performance profile of SDPDAL and SDPNAL+ on
criteria ``error'' and ``CPU''.
These figures again show that the accuracy and the CPU time of SDPDAL are better than SDPNAL+ on most problems.

\begin{figure}[!htb]
\centering
\subfigure[error $\max\{\eta_p, \eta_d, \eta_g, \eta_{K}, \eta_{K^*}, \eta_{C1}\}$]{
\includegraphics[width=0.45\textwidth]{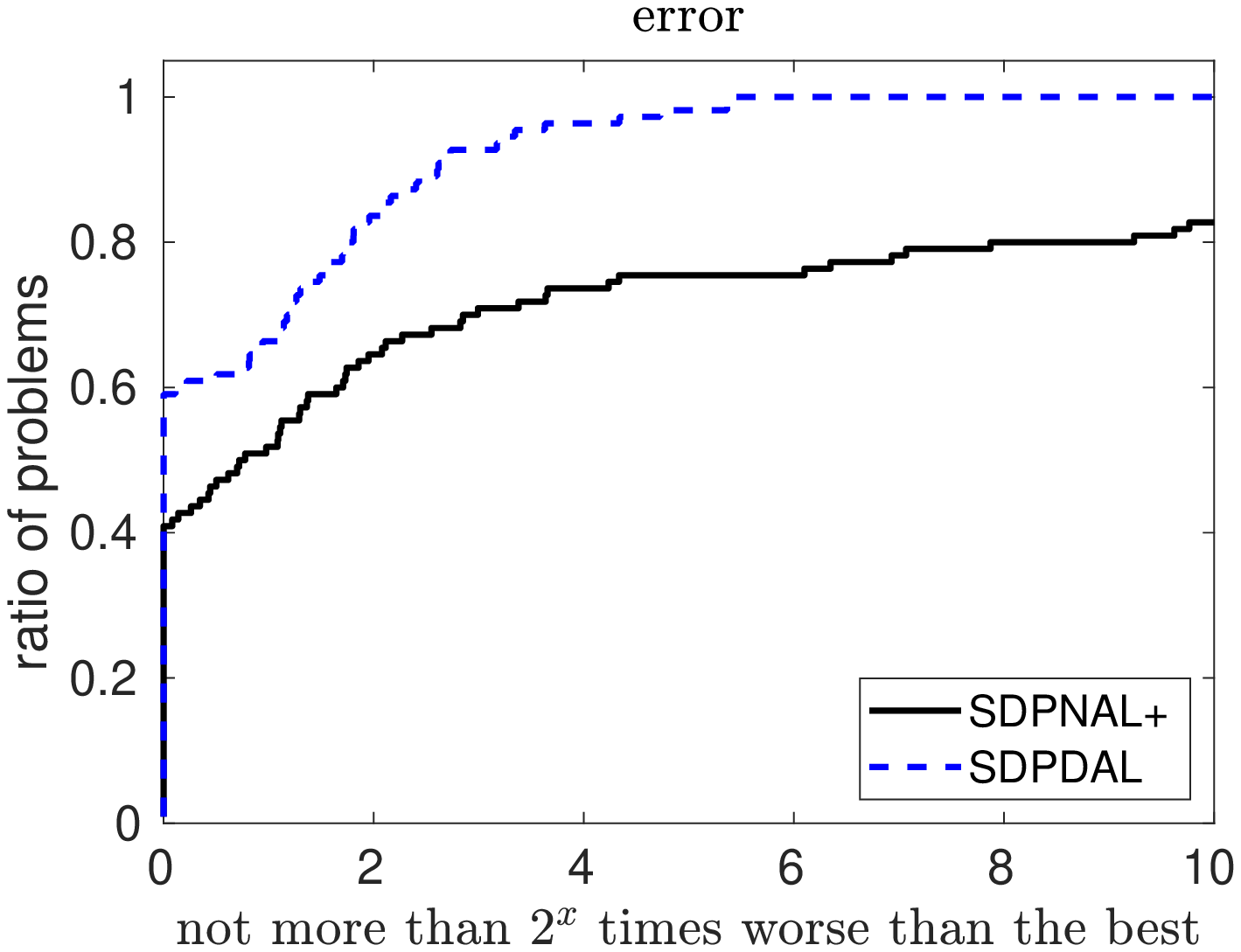}}
\subfigure[CPU]{
\includegraphics[width=0.45\textwidth]{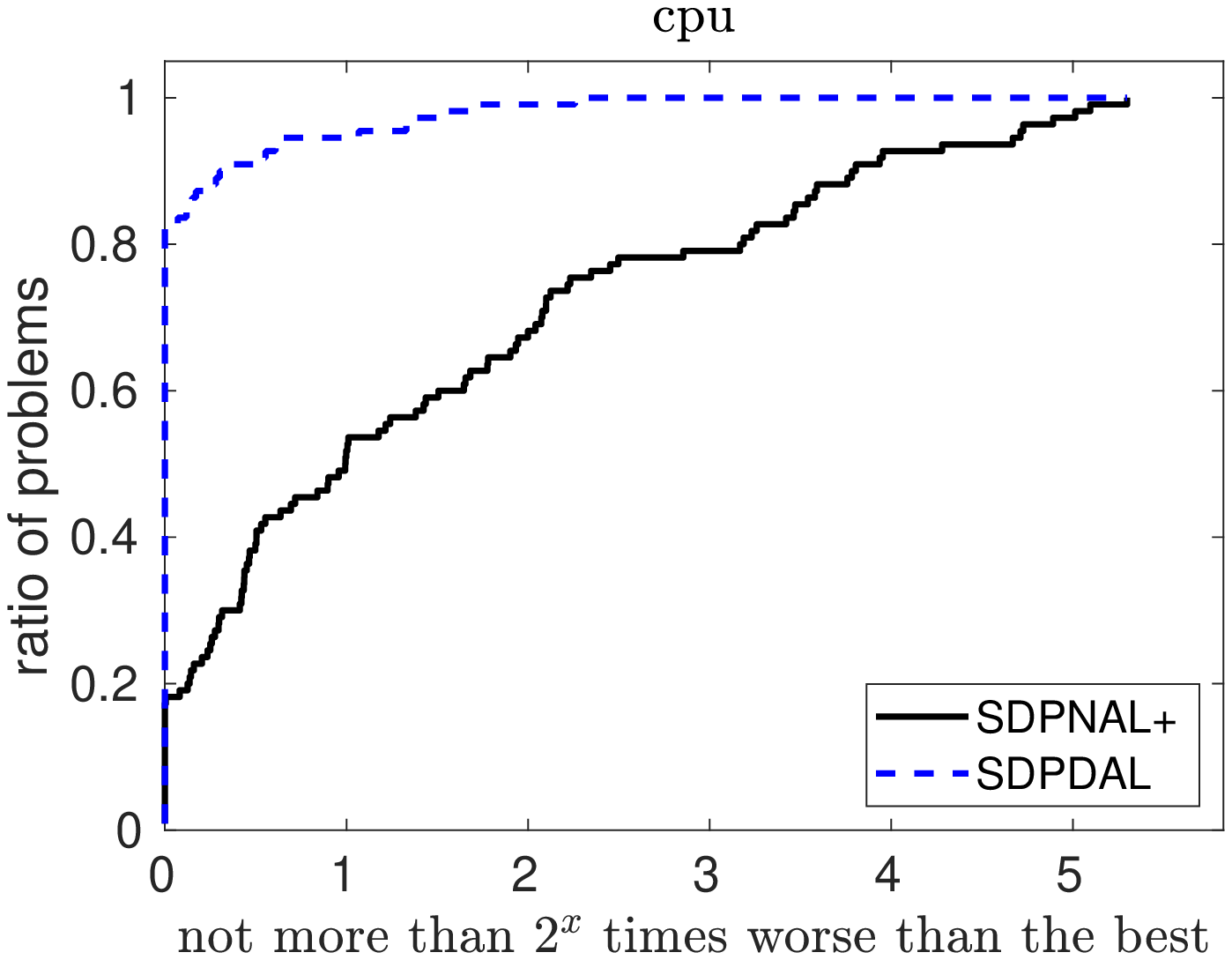}}
\caption{The performance profiles of SDPDAL and SDPNAL+ on RCP}\label{fig:perf}
\end{figure}

\subsection{Theta problems}
Let $G=(V, E)$ be a simple, undirected graph. The Lov{\'a}sz theta SDP \cite{theta} is defined as
\begin{equation} \label{eqn:theta}
        \min \; \lra{-ee^T,X}, \text{ s.t. }  \tr(X) = 1, \; X \succeq 0,\;X_{ij} = 0,\; (i, j) \in E,
\end{equation}
which can be formulated in the form of \eqref{sdp:general} with $f(X) = \lra{-ee^T,X}$, $h(X) = 0$, and $\mathcal{D} = \{X\succeq 0~|~\tr(X) = 1 \}$. \revise{Computing $X = R^TR$ explicitly is not required, since  $f(R^TR)=\lra{Re, Re}=\|Re\|_2^2$ and $X_{ij}$ can be formed from $r_i^Tr_j, (i, j) \in E$, where $r_i$ is the $i$-th column of $R$.} For testing purposes,
we run SDPDAL on the dataset from \cite{sdpnal}, where we drop the instances
\texttt{2dc.\{512,1024,2048\}} since the solutions to these problems are not low rank.
The overall results of all 57 theta instances are demonstrated in Table \ref{tab:theta-stat} and Figure \ref{fig:theta-perf}.

\begin{table}[!htb]
\centering
\setlength{\tabcolsep}{5pt}
\caption{A statistic of computational results of SDPDAL and SDPNAL+ for theta problems}\label{tab:theta-stat}

\begin{tabular}{|c|cc|cc|}

\hline
& \multicolumn{2}{c|}{SDPDAL}
& \multicolumn{2}{c|}{SDPNAL+} \\
\cline{2-5}
case & number & percentage & number & percentage\\
\hline
success & 57 &  100.0\% & 55 & 96.5\% \\
fastest & 53 &  93.0\% & 4 & 7.0\% \\
fastest under success & 53 &  93.0\% & 4 & 7.3\% \\
not slower 1.2 times & 54 &  94.7\% & 11 & 19.3\% \\
not slower 1.2 times under success & 54 &  94.7\% & 11 & 20.0\% \\
\hline
\end{tabular}
\end{table}

We make the following comments on the results:
 1) 
    In summary, SDPDAL is faster than SDPNAL+ on most problems.
    A ten-fold speedup can be observed in several cases.
2) SDPDAL successfully solves all 57 problems, while 2 out of 57 instances
    are partially solved by SDPNAL+. The reason is that SDPNAL+ may allow a larger
    $\eta_g$ even under the most strict stopping rule.
3) In terms of error ($\eta_{\max}$), SDPNAL+ is slightly better than SDPDAL
    by Figure \ref{fig:theta-perf}. According to the detailed output,
    SDPDAL also returns solutions with accuracy under $10^{-6}$ in most cases.
    Thus the error is considered to be at the same level as SDPNAL+.

\begin{figure}[!htb]
\centering
\subfigure[error $\eta_{\max}$]{
\includegraphics[width=0.45\textwidth]{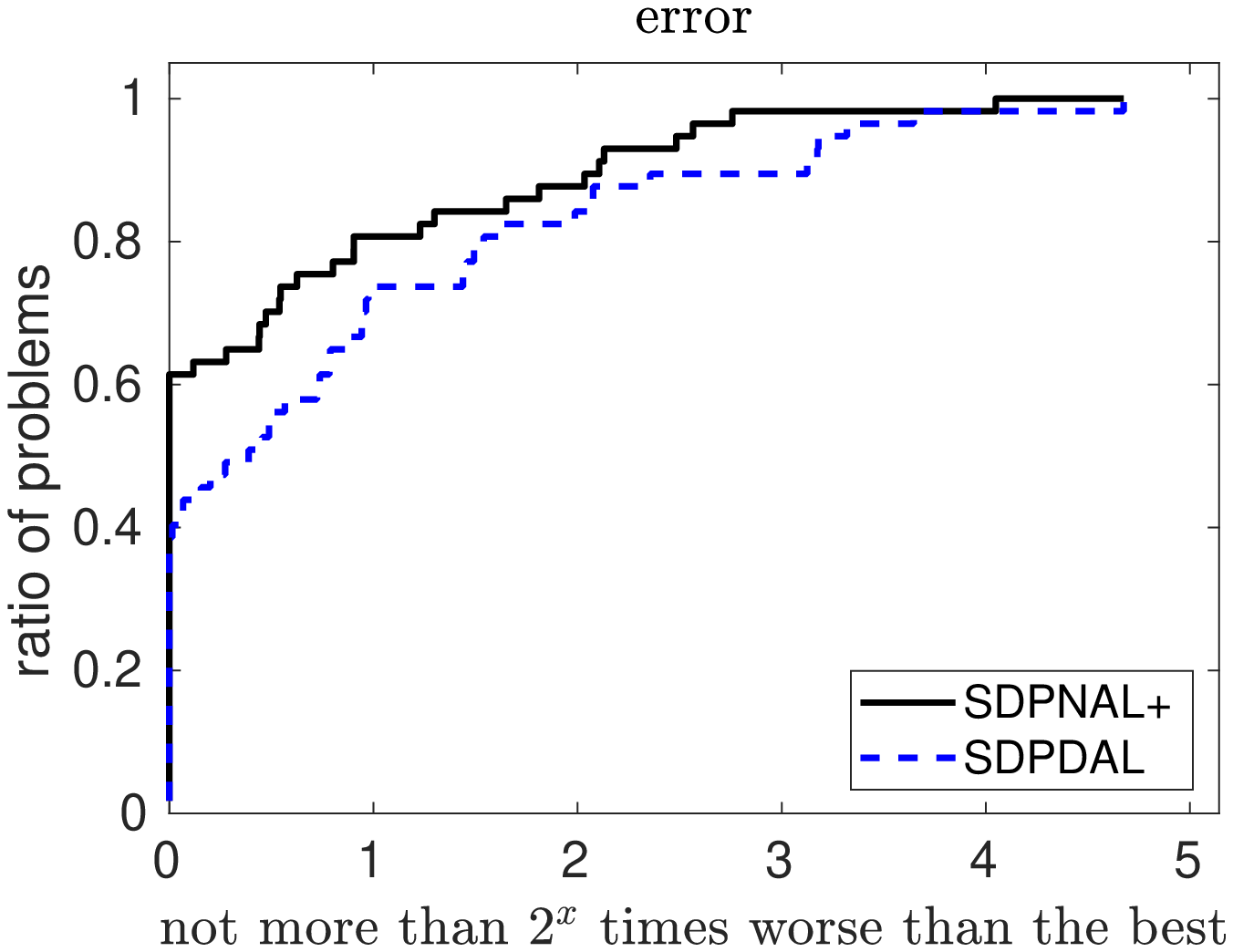}}
\subfigure[CPU]{
\includegraphics[width=0.45\textwidth]{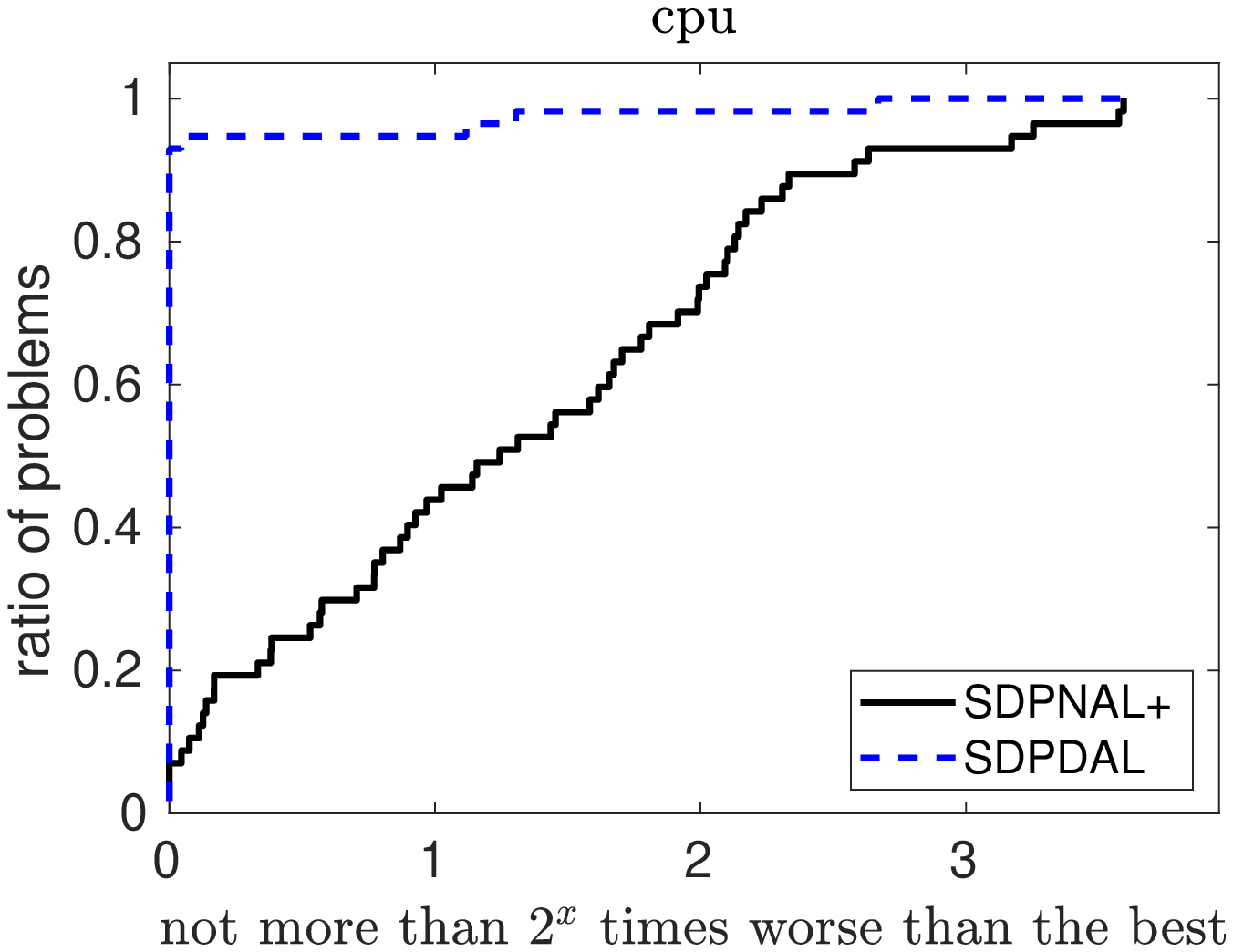}}
\caption{The performance profiles of SDPDAL and SDPNAL+ for theta problems}\label{fig:theta-perf}
\end{figure}

\subsection{Nearest correlation matrix problems (NCM)}
Given a matrix $G\in\mbS^n$, we aim to find the nearest correlation matrix:
\begin{equation}\label{pro:ncm}
  \begin{aligned}
  \min_{X\in\mbS^{n}}\frac{1}{2}\|H\circ (X-G)\|_F^2, \text{ s.t. } X\succeq 0, \text{diag}(X) = e.
  \end{aligned}
\end{equation}
where $H$ is a non-negative weight matrix.  In this experiment, we first take a matrix $\hat{G}$, which is a correlation matrix. Then, we perturb $\hat{G}$ to
$ G = \text{sym}((1-\alpha )\hat{G} + \alpha E)$,
where $\text{sym}(X): = 0.5(X+X^T)$, $\alpha \in (0,1)$ is a given parameter and
$E$ is a low-rank matrix with $E = P_1^TP_2$, where
$P_1,P_2\in\mathbb{R}^{r\times n}$ is randomly generated matrix.  The
construction of the weight matrix $H$ refers to \cite{qsdpnal}. In addition, we
also test the case without $H$, i.e., $H = E$. We choose two datasets from
\cite{li2010inexact}  to generate a matrix $\hat{G}$:
``Leukemia'',``hereditarybc'', and another three instances, namely
``Ross'',``Staunton'' datasets\footnote{See
https://discover.nci.nih.gov/nature2000/natureintromain.jsp}, which come from 60
human tumour cell lines from the National Cancer Institute (NCI).  In our experiment, we set $\alpha = 0.01,0.02,0.05$, respectively.
In addition, we also test the NCM problem with box constraints:
\begin{equation}\label{pro:ncm_box}
  \begin{aligned}
  \min_{X\in\mbS^{n}}\frac{1}{2}\|H\circ (X-G)\|_F^2, \text{s.t.} X\succeq 0, \text{diag}(X) = e, X\in\mcK.
  \end{aligned}
\end{equation}
where $\mathcal{K} = \{X\in\mathbb{S}^n~|~ X\geq l\}$, and  we fix $\alpha = 0.1$ and set $l = -0.3,-0.4,-0.5$, respectively. \revise{Due to the
 elementwise operation ``$\circ$'' and $X \geq l$, computing $X = R^TR$ is required.} 

In Figure \ref{fig:perf_ncm_box}, we show the performance profiles of two criteria ``error'' and ``CPU time'', where error means $\max\{\eta_p,\eta_d, \eta_g\}$.  These figures show that the accuracy and the CPU time of SDPDAL are better than QSDPNAL on most problems. The statistics of all examples are shown in Table \ref{tab:stat_ncm}. Both SDPDAL and QSDPNAL solve all 120 instances, and SDPDAL is faster than QSDPNAL in 116 out of 120 instances.

\begin{figure}[!htb]
\centering
\subfigure[error $\max\{\eta_p,\eta_d, \eta_g\}$]{
\includegraphics[width=0.45\textwidth]{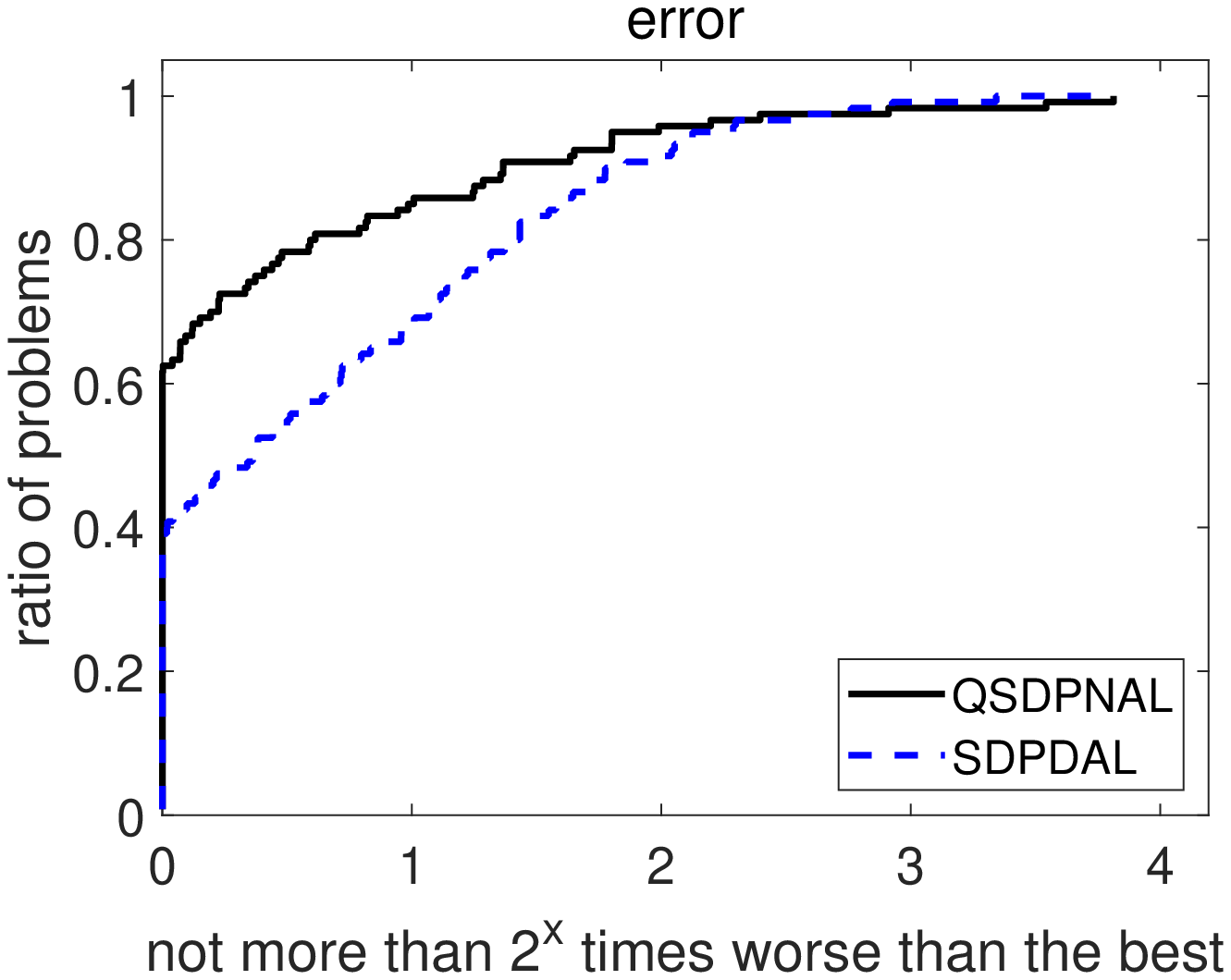}}
\subfigure[CPU]{
\includegraphics[width=0.45\textwidth]{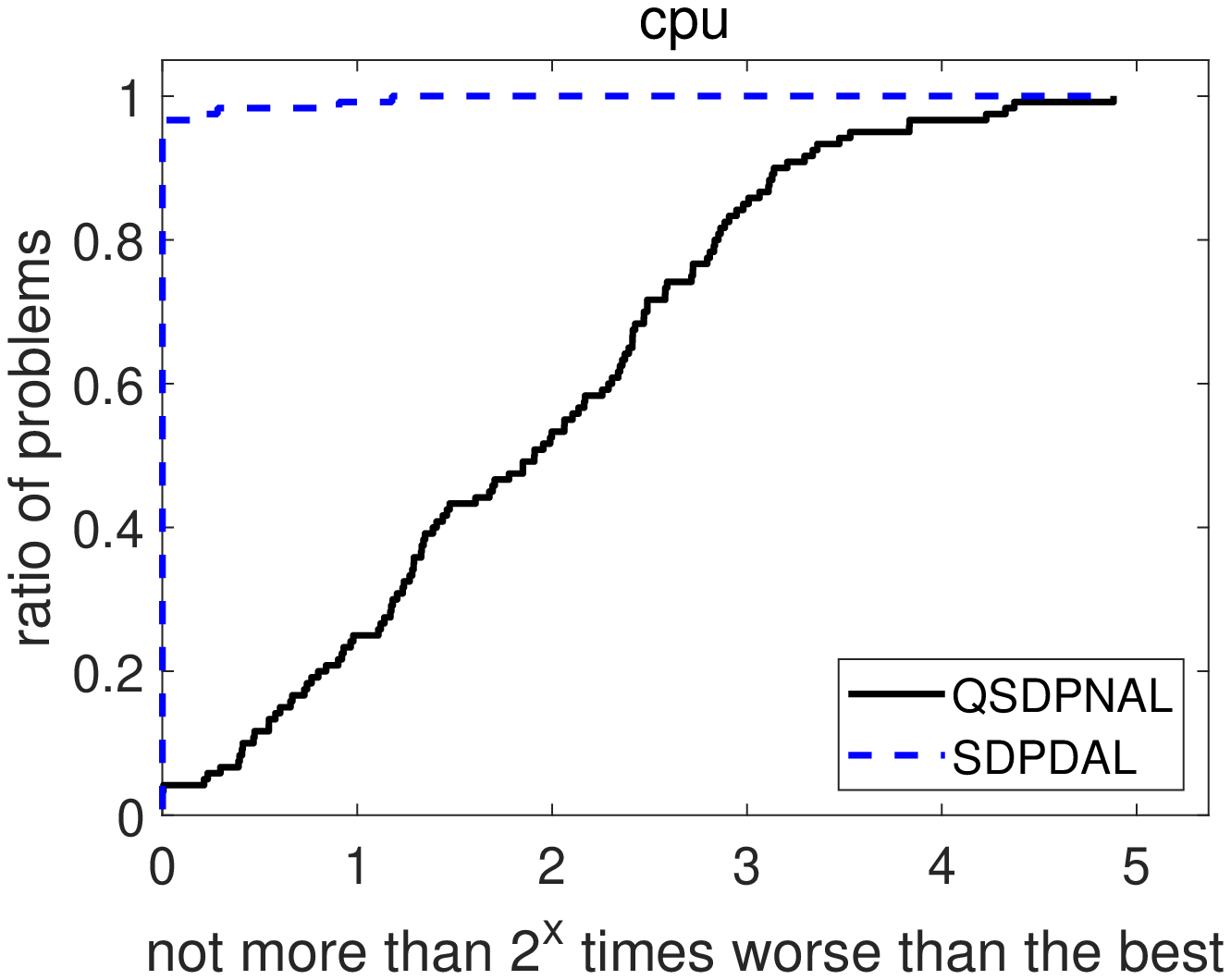}}
\caption{The performance profiles of SDPDAL and QSDPNAL on NCM}\label{fig:perf_ncm_box}
\end{figure}

\begin{table}[!htb]
\centering
\setlength{\tabcolsep}{2.2pt}
\caption{A statistic of computational results of SDPDAL and QSDPNAL on NCM.}\label{tab:stat_ncm}

\begin{tabular}{|c|cc|cc|}

\hline
& \multicolumn{2}{c|}{SDPDAL}
& \multicolumn{2}{c|}{SDPNAL+} \\
\cline{2-5}
case & number & percentage & number & percentage\\
\hline
success & 120 &  100.0\% & 120 & 100.0\% \\
fastest & 116 &  96.7\% & 4 & 3.3\% \\
fastest under success & 116 &  96.7\% & 4 & 3.3\% \\
not slower 1.2 times & 117 &  97.5\% & 7 & 5.8\% \\
not slower 1.2 times under success & 117 &  97.5\% & 7 & 5.8\% \\

\hline
\end{tabular}
\end{table}

\subsection{Sparse PCA with L1 regularization}

The sparse PCA problem for a single component is
 $ \max_{y\in \mbR^n} y^TLy, \; \text{s.t.}\; \|y\|^2=1,\text{card}(y)\leq k.$ 
The function $\text{card}(\cdot)$ refers to the number of non-zero elements. This problem can be expressed as a low-rank SDP:
\begin{equation*}
\min_{X\in \mbS^n} -\lra{L,X}+\lambda \|X\|_1,\quad \text{s.t.}\quad \tr(X)=1,X\succeq 0,
\end{equation*}
where $\|X\|_1=\sum_{ij}|X_{ij}|$ 
\revise{ and explicit computation of  $X = R^TR$ is required.}

For the choice of $L$, we formulate $L$ based on the covariance matrix of real data or use the random example in \cite{spcac}. Namely, $L$ is generated by
$ L = \frac{1}{\|u\|_2^2}uu^T+2VV^T$,
where $u=\bmbm{1,1/2,\dots,1/n}$ and each entry of $V\in \mbR^{n\times n}$ is uniformly chosen from $[0,1]$ at random.
We compare our algorithm with SuperSCS \cite{superscs} \revise{ and DSPCA \cite{d2007direct}.} The results are presented in Table \ref{tab:spca}. In several instances, SuperSCS exceeds the time limit and fails to return a solution, while our algorithm \revise{and DSPCA} can efficiently find the optimal solution. \revise{DSPCA has a better performance compared to the general solver superSCS. Still, SDPDAL compares favorably to 
DSPCA and superSCS.}

\begin{table}[!htb]
\setlength{\tabcolsep}{2pt}
\caption{Computational results of SDPDAL, superCSC and DSPCA on SPCA .}\label{tab:spca}
\centering
\begin{tabular}{|c|c|c|c|c|c|c|c|c|c|c|c|}
  \hline
 &\multicolumn{5}{c|}{SDPDAL}&\multicolumn{3}{c|}{superCSC}&\multicolumn{3}{c|}{DSPCA} \\ \hline
id &  obj & $\eta_g$&$\eta_{K^*}$&$\eta_{C1}$  & time &obj&$\eta_{K}$&time &obj&$\eta_{K}$&time \\ \hline

20news & -3.3+3 & 6.7-11& 2.0-12 & 2.0-12 & 0.8&-3.3+3&2.5-9 &486&  -3.3+3 & 3.8-10 & 3.8\\\hline
bibtex & -1.8+4 & 2.1-9& 1.2-11 & 1.2-11 & 76.6&-3.1+2&1.6-2 &2885& -1.8+4 & 2.8-10 & 1021\\\hline
cancer & -1.8+4 & 1.1-9& 5.5-12 & 5.5-12 & 45.9&-3.1+2&4.8-3 &3567& -1.8+4 & 2.7-9 & 766.5\\\hline
delicious & -7.5+4 & 1.7-10& 2.6-12 & 2.6-12 & 2.9&-7.5+4&5.0-9 &1953& -7.5+4 & 5.7-9 & 39.3\\\hline
dna & -1.8+3 & 1.1-9& 1.7-13 & 1.2-13 & 0.3&-1.8+3&7.9-9 &1528& -1.8+3 & 7.9-12 & 4.8\\\hline
gisette & -3.9+5 & 6.7-10& 2.5-12& 2.5-12 & 1190&//&// &//& -3.9+5 & 3.5-13 & 18678\\\hline
madelon & -9.5+7 & 5.0-13& 5.9-15 & 5.9-15 & 16.7&-9.5+7&5.7-10 &956& -9.5+7 & 4.0-14 & 54.2\\\hline
protein & -3.0+3 & 3.5-9& 3.5-11 & 3.5-11 & 3.7&-3.0+3&1.2-6 &3866& -3.0+3 & 1.2-14 & 33.1\\\hline
rand2048 & -2.1+6 & 3.9-16& 7.4-18& 1.5-18 & 2.3&//&// &//& -2.1+6 & 2.7-13 & 262.9\\\hline
rand4096 & -8.4+6 & 1.7-16& 8.1-18& 8.2-18 & 73.4&//&// &//&-8.4+6&2.4-13 &492.1\\\hline

\end{tabular}
\end{table}

\section{Conclusions}
In this paper, we proposed a decomposition method based augmented Lagrangian framework for solving low-rank semidefinite programming problems, possibly  with nonlinear  objective functions, nonsmooth regularization, and general linear equality/inequality constraints.
{The key strategy is to separate the structured constraints for matrix
    factorization and deal with other constraints using ALM and splitting. Each ALM subproblem
can be efficiently solved by a semismooth Newton method on \revise{a} manifold.
Theoretically, we analyze sufficient conditions for the global optimality of the factorized subproblem and establish convergence analysis for both the Riemannian subproblem and the augmented Lagrangian method. 
Numerical comparisons on various test problems show that our method compares favorably with other algorithms, especially for large problems. 
 Our algorithmic framework is quite general and can be extended to other scenarios  as long as a low-rank solution is admitted and the manifold structure is simple.

}

\section*{Acknowledgements} The authors are grateful to Prof. Samuel Burer and
three  anonymous referees for their valuable comments and suggestions.

\bibliographystyle{siamplain}
\bibliography{SDP}

\begin{thebibliography}{10}

\bibitem{oaomm}
{\sc P.-A. Absil, R.~Mahony, and R.~Sepulchre}, {\em Optimization algorithms on
  matrix manifolds}, Princeton University Press, 2009.

\bibitem{aelat}
{\sc P.-A. Absil, R.~Mahony, and J.~Trumpf}, {\em An extrinsic look at the
  {R}iemannian {H}essian}, in International Conference on Geometric Science of
  Information, Springer, 2013, pp.~361--368.

\bibitem{agarwal2021adaptive}
{\sc N.~Agarwal, N.~Boumal, B.~Bullins, and C.~Cartis}, {\em Adaptive
  regularization with cubics on manifolds}, Mathematical Programming, 188
  (2021), pp.~85--134.

\bibitem{anjos2011handbook}
{\sc M.~Anjos and J.~Lasserre}, {\em Handbook on semidefinite, cone and
  polynomial optimization: theory, algorithms, software and applications},
  2011.

\bibitem{Armbruster2012}
{\sc M.~Armbruster, M.~F{\"u}genschuh, C.~Helmberg, and A.~Martin}, {\em {LP
  and SDP} branch-and-cut algorithms for the minimum graph bisection problem: a
  computational comparison}, Mathematical Programming Computation, 4 (2012),
  pp.~275--306.

\bibitem{bartan2021neural}
{\sc B.~Bartan and M.~Pilanci}, {\em Neural spectrahedra and semidefinite
  lifts: Global convex optimization of polynomial activation neural networks in
  fully polynomial-time}, arXiv preprint arXiv:2101.02429,  (2021).

\bibitem{podga}
{\sc A.~Barvinok}, {\em Problems of distance geometry and convex properties of
  quadratic maps}, Discrete and Computational Geometry, 13 (1995),
  pp.~189--202.

\bibitem{dcffs}
{\sc S.~Bhojanapalli, A.~Kyrillidis, and S.~Sanghavi}, {\em Dropping convexity
  for faster semi-definite optimization}, in Conference on Learning Theory,
  PMLR, 2016, pp.~530--582.

\bibitem{palmf}
{\sc E.~G. Birgin and J.~M. Mart{\'\i}nez}, {\em Practical augmented Lagrangian
  methods for constrained optimization}, SIAM, 2014.

\bibitem{grocf}
{\sc N.~Boumal, P.-A. Absil, and C.~Cartis}, {\em Global rates of convergence
  for nonconvex optimization on manifolds}, IMA Journal of Numerical Analysis,
  39 (2019), pp.~1--33.

\bibitem{tncbm}
{\sc N.~Boumal, V.~Voroninski, and A.~Bandeira}, {\em The non-convex
  {Burer-Monteiro} approach works on smooth semidefinite programs}, in Advances
  in Neural Information Processing Systems, 2016, pp.~2757--2765.

\bibitem{boumal2020deterministic}
{\sc N.~Boumal, V.~Voroninski, and A.~S. Bandeira}, {\em Deterministic
  guarantees for burer-monteiro factorizations of smooth semidefinite
  programs}, Communications on Pure and Applied Mathematics, 73 (2020),
  pp.~581--608.

\bibitem{boyd2004convex}
{\sc S.~Boyd, S.~P. Boyd, and L.~Vandenberghe}, {\em Convex optimization},
  Cambridge university press, 2004.

\bibitem{sdplr}
{\sc S.~Burer and R.~D. Monteiro}, {\em A nonlinear programming algorithm for
  solving semidefinite programs via low-rank factorization}, Mathematical
  Programming, 95 (2003), pp.~329--357.

\bibitem{d2007direct}
{\sc A.~d'Aspremont, L.~El~Ghaoui, M.~I. Jordan, and G.~R. Lanckriet}, {\em A
  direct formulation for sparse pca using semidefinite programming}, SIAM
  Review, 49 (2007), pp.~434--448.

\bibitem{dolan2002benchmarking}
{\sc E.~D. Dolan and J.~J. Mor{\'e}}, {\em Benchmarking optimization software
  with performance profiles}, Mathematical programming, 91 (2002),
  pp.~201--213.

\bibitem{Gotoeaav2372}
{\sc H.~Goto, K.~Tatsumura, and A.~R. Dixon}, {\em Combinatorial optimization
  by simulating adiabatic bifurcations in nonlinear {H}amiltonian systems},
  Science Advances, 5 (2019).

\bibitem{ghmas}
{\sc J.-B. Hiriart-Urruty, J.-J. Strodiot, and V.~H. Nguyen}, {\em Generalized
  {H}essian matrix and second-order optimality conditions for problems with
  {$C^{1, 1}$} data}, Applied mathematics and optimization, 11 (1984),
  pp.~43--56.

\bibitem{arnt}
{\sc J.~Hu, A.~Milzarek, Z.~Wen, and Y.~Yuan}, {\em Adaptive quadratically
  regularized {N}ewton method for riemannian optimization}, SIAM Journal on
  Matrix Analysis and Applications, 39 (2018), pp.~1181--1207.

\bibitem{lroot}
{\sc M.~Journ{\'e}e, F.~Bach, P.-A. Absil, and R.~Sepulchre}, {\em Low-rank
  optimization on the cone of positive semidefinite matrices}, SIAM Journal on
  Optimization, 20 (2010), pp.~2327--2351.

\bibitem{ijcai2019-157}
{\sc M.~Krechetov, J.~Marecek, Y.~Maximov, and M.~Takac}, {\em
  Entropy-penalized semidefinite programming}, in Proceedings of the
  Twenty-Eighth International Joint Conference on Artificial Intelligence,
  {IJCAI-19}, International Joint Conferences on Artificial Intelligence
  Organization, 7 2019, pp.~1123--1129.

\bibitem{biqcrunch}
{\sc N.~Krislock, J.~Malick, and F.~Roupin}, {\em {BiqCrunch}: A semidefinite
  branch-and-bound method for solving binary quadratic problems}, ACM Trans.
  Math. Softw., 43 (2017).

\bibitem{li2010inexact}
{\sc L.~Li and K.-C. Toh}, {\em An inexact interior point method for
  {$l_1$}-regularized sparse covariance selection}, Mathematical Programming
  Computation, 2 (2010), pp.~291--315.

\bibitem{qsdpnal}
{\sc X.~Li, D.~Sun, and K.-C. Toh}, {\em {QSDPNAL}: a two-phase augmented
  {L}agrangian method for convex quadratic semidefinite programming},
  Mathematical Programming Computation, 10 (2018), pp.~703--743.

\bibitem{assnm}
{\sc Y.~Li, Z.~Wen, C.~Yang, and Y.-x. Yuan}, {\em A semismooth {N}ewton method
  for semidefinite programs and its applications in electronic structure
  calculations}, SIAM Journal on Scientific Computing, 40 (2018),
  pp.~A4131--A4157.

\bibitem{theta}
{\sc L.~Lov{\'a}sz}, {\em On the {S}hannon capacity of a graph}, IEEE
  Transactions on Information theory, 25 (1979), pp.~1--7.

\bibitem{otroe}
{\sc P.~Moscato, M.~G. Norman, and G.~Pataki}, {\em On the rank of extreme
  matrices in semidefinite programs and the multiplicity of optimal
  eigenvalues}, Mathematics of Operations Research, 23 (1998), pp.~339--358.

\bibitem{akcvs}
{\sc J.~Peng and Y.~Wei}, {\em Approximating {K}-means-type clustering via
  semidefinite programming}, SIAM Journal on Optimization, 18 (2007),
  pp.~186--205.

\bibitem{Renyi1960}
{\sc A.~R{\'e}nyi}, {\em On measures of entropy and information}, in
  Proceedings of the Fourth Berkeley Symposium on Mathematical Statistics and
  Probability, Volume 1: Contributions to the Theory of Statistics, The Regents
  of the University of California, 1961.

\bibitem{aialf}
{\sc M.~F. Sahin, A.~Alacaoglu, F.~Latorre, V.~Cevher, et~al.}, {\em An inexact
  augmented {L}agrangian framework for nonconvex optimization with nonlinear
  constraints}, in Advances in Neural Information Processing Systems, 2019,
  pp.~13943--13955.

\bibitem{brfsp}
{\sc S.~Shah, A.~K. Yadav, C.~D. Castillo, D.~W. Jacobs, C.~Studer, and
  T.~Goldstein}, {\em Biconvex relaxation for semidefinite programming in
  computer vision}, in European Conference on Computer Vision, Springer, 2016,
  pp.~717--735.

\bibitem{superscs}
{\sc P.~Sopasakis, K.~Menounou, and P.~Patrinos}, {\em {SuperSCS}: fast and
  accurate large-scale conic optimization}, in 2019 18th European Control
  Conference (ECC), IEEE, 2019, pp.~1500--1505.

\bibitem{smvf}
{\sc D.~Sun and J.~Sun}, {\em Semismooth matrix-valued functions}, Mathematics
  of Operations Research, 27 (2002), pp.~150--169.

\bibitem{Tsallis1988}
{\sc C.~Tsallis}, {\em Possible generalization of {B}oltzmann-{G}ibbs
  statistics}, Journal of Statistical Physics, 52 (1988), pp.~479--487.

\bibitem{WangDengLiuWen2021arxiv}
{\sc Y.~Wang, K.~Deng, H.~Liu, and Z.~Wen}, {\em A decomposition augmented
  lagrangian method for low-rank semidefinite programming}, arXiv preprint
  arXiv:2109.11707,  (2021).

\bibitem{weyl1912asymptotische}
{\sc H.~Weyl}, {\em Das asymptotische verteilungsgesetz der eigenwerte linearer
  partieller differentialgleichungen (mit einer anwendung auf die theorie der
  hohlraumstrahlung)}, Mathematische Annalen, 71 (1912), pp.~441--479.

\bibitem{wolkowicz2012handbook}
{\sc H.~Wolkowicz, R.~Saigal, and L.~Vandenberghe}, {\em Handbook of
  semidefinite programming: theory, algorithms, and applications}, vol.~27,
  Springer Science \& Business Media, 2012.

\bibitem{hosdp}
{\sc H.~Wolkowicz, R.~Saigal, and L.~Vandenberghe}, {\em Handbook of
  semidefinite programming: theory, algorithms, and applications}, vol.~27,
  Springer Science \& Business Media, 2012.

\bibitem{sdpnalp}
{\sc L.~Yang, D.~Sun, and K.-C. Toh}, {\em {SDPNAL} $+$: a majorized semismooth
  {N}ewton-{CG} augmented lagrangian method for semidefinite programming with
  nonnegative constraints}, Mathematical Programming Computation, 7 (2015),
  pp.~331--366.

\bibitem{spcac}
{\sc Y.~Zhang, A.~d’Aspremont, and L.~El~Ghaoui}, {\em {Sparse PCA}: Convex
  relaxations, algorithms and applications}, in Handbook on Semidefinite, Conic
  and Polynomial Optimization, Springer, 2012, pp.~915--940.

\bibitem{sdpnal}
{\sc X.-Y. Zhao, D.~Sun, and K.-C. Toh}, {\em A {N}ewton-{CG} augmented
  {L}agrangian method for semidefinite programming}, SIAM Journal on
  Optimization, 20 (2010), pp.~1737--1765.

\end{thebibliography}
\end{document}